\DeclareMathOperator{\diag}{diag}
\DeclareMathOperator{\err}{err}
\DeclareMathOperator*{\argmin}{argmin}
\DeclareMathOperator*{\elemargmin}{elem-argmin}
\DeclareMathOperator{\ri}{ri}
\DeclareMathOperator{\cl}{cl}
\DeclareMathOperator{\aff}{aff}
\DeclareMathOperator{\minplus}{min_{>0}}
\DeclareMathOperator{\prob}{Prob}
\DeclareMathOperator{\graph}{graph}
\DeclareMathOperator{\setint}{int}
\newcommand{\calA}{\mathcal{A}}
\newcommand{\calE}{\mathcal{E}}
\newcommand{\calK}{\mathcal{K}}
\newcommand{\calM}{\mathcal{M}}
\newcommand{\calO}{\mathcal{O}}
\newcommand{\calS}{\mathcal{S}}
\newcommand{\calP}{\mathcal{P}}
\newcommand{\calU}{\mathcal{U}}
\newcommand{\calV}{\mathcal{V}}
\newcommand{\calZ}{\mathcal{Z}}
\newcommand{\bfu}{\mathbf{u}}
\newcommand{\bfx}{\mathbf{x}}
\newcommand{\bfy}{\mathbf{y}}
\newcommand{\bbR}{\mathbb{R}}
\newcommand{\bbP}{\mathbb{P}}
\newcommand{\bbQ}{\mathbb{Q}}
\newcommand{\bbN}{\mathbb{N}}
\newcommand{\X}{\mathcal{X}}
\newcommand{\Y}{\mathcal{Y}}
\newcommand{\tc}{\Pi_{\mbox{\tiny TC}}(P, Q)}
\newcommand{\tcrow}{\Pi(P(x,\cdot), Q(y,\cdot))}
\newcommand{\tceta}{\Pi_{\mbox{\tiny TC}}^\eta(P,Q)}
\newcommand{\tcetarow}{\Pi_\eta(P(x,\cdot), Q(y,\cdot))}
\newcommand{\tcetarv}{\Pi_{\mbox{\tiny TC}}^\eta(\bbP, \bbQ)}
\newcommand{\tcrv}{\Pi_{\mbox{\tiny TC}}(\bbP, \bbQ)}
\newcommand{\pitc}{\Pi_{\mbox{\tiny TC}}}
\newtheorem{ex}{Example} 
\newtheorem{thm}{Theorem}
\newtheorem{prop}[thm]{Proposition} 
\newtheorem{rem}[thm]{Remark}
\newtheorem{cor}[thm]{Corollary}
\newtheorem{defn}[thm]{Definition}
\newtheorem{lem}[thm]{Lemma}
\newcounter{tmp}
\begin{document}


\title{Optimal Transport for Stationary Markov Chains \\ via Policy Iteration}


\author{Kevin O'Connor, Kevin McGoff, and Andrew B Nobel}

\maketitle

\begin{abstract}
We study the optimal transport problem for pairs of stationary finite-state Markov chains, with an emphasis on the 
computation of optimal transition couplings.  Transition couplings are a constrained family of transport plans 
that capture the dynamics of Markov chains.  
Solutions of the optimal transition coupling (OTC) problem correspond to alignments of the 
two chains that minimize long-term average cost.   
We establish a connection between the OTC problem and Markov decision processes, 
and show that solutions of the OTC problem can be obtained via an adaptation of policy iteration.
For settings with large state spaces, we develop a fast approximate algorithm based on an entropy-regularized 
version of the OTC problem, and provide bounds on its per-iteration complexity.
We establish a stability result for both the regularized and unregularized algorithms, 
from which a statistical consistency result follows as a corollary.
We validate our theoretical results empirically through a simulation study, demonstrating that the 
approximate algorithm exhibits faster overall runtime with low error.
Finally, we extend the setting and application of our methods to hidden Markov models, and 
illustrate the potential use of the proposed algorithms in practice with an application to computer-generated music.
\end{abstract}


\section{Introduction}\label{sec:intro}

The application and computation of optimal transport (OT) plans has recently received a great deal of attention within the machine learning community.
Applications of optimal transport in machine learning include generative modeling 
\citep{arjovsky2017wasserstein, deshpande2018generative, genevay2017learning,  kolouri2018sliced,salimans2018improving} 
and supervised learning \citep{frogner2015learning, janati2019wasserstein, Luise2018DifferentialPO}.
In this paper, we study the optimal transport (OT) problem in the case where the objects of interest are stationary Markov chains or processes possessing hidden Markov structure.
The problem of interest to us is distinct from traditional applications of coupling to Markov chains, e.g., 
to establish convergence to a stationary distribution.
Our interest is in the computation of optimal transport plans for Markov chains that explicitly account for both stationarity 
and Markovian structure.
In particular, we develop algorithms for computing solutions to a Markov-constrained form of the OT problem. 
The algorithms leverage recent advances in computational OT as well as techniques from Markov decision processes.

The principled extension of computational OT techniques to classes of distributions that possess additional structure, such as martingales or dependent processes, is an important direction of research.  Indeed, some variations of constrained OT have been considered in recent work \citep{beiglbock2013model,zaev2015monge,forrow2019statistical,moulos2020bicausal,backhoff2020estimating}, and  
several recent applications of OT have focused on dependent observations \citep{schiebinger2019optimal,xu2018distilled}. 
Extensions of OT to dependent processes open the door to new applications in climate science, finance, epidemiology and other fields, where it is common for observations to possess temporal or spatial structure.
The OT problem that we consider is tailored to the alignment and comparison of Markov chains and hidden
Markov models (HMMs). 
As an illustration, we describe in Section \ref{sec:experiments} an application of the proposed techniques to 
the analysis of computer-generated music.

The primary contributions of this paper are as follows:
\begin{itemize}
\item We formulate a constrained version of the OT problem for stationary Markov chains, referred to as the optimal transition coupling (OTC) problem.
The OTC problem aims to align the two chains of interest so as to minimize long-term average cost while preserving Markovity and stationarity.

\item We detail an extension of the OTC problem to HMMs.
In particular, we describe how one may couple a pair of HMMs via a coupling of their hidden chains using a cost that is derived from the OT cost between their emission distributions.

\item 
We establish a useful connection between the OTC problem and Markov decision processes (MDPs) that 
provides a means of computing optimal solutions in an efficient manner.
Leveraging this connection, we arrive at an algorithm combining policy iteration \citep{howard1960dynamic} with OT solvers that we refer to as \texttt{ExactOTC} (Algorithm \ref{alg:pia}).
We state in Theorem \ref{thm:convergence_of_pia} that if the two Markov chains of interest are irreducible, then \texttt{ExactOTC} converges to a solution of the OTC problem in a finite number of iterations.

\item 
We introduce an entropically-constrained OTC problem and an associated regularized algorithm, referred to as \texttt{EntropicOTC} (Algorithm \ref{alg:fastentropic_pia}), that exhibits improved computational efficiency in theory and in practice.
In Theorems \ref{thm:policy_evaluation_complexity} and \ref{thm:policy_improvement_complexity}, we establish upper bounds on the computational complexity of this algorithm, demonstrating that the runtime of each iteration is nearly-linear in the dimension of the couplings under study.
This dependence is comparable to the state-of-the-art for computational OT.

\item We prove a stability result for the OTC problem, stated formally in Theorem \ref{thm:consistency}.
Consistency of the plug-in estimate of the optimal transition coupling and its expected cost follows as a corollary (see Corollary \ref{cor:consistency}).
\end{itemize}

The rest of the paper is organized as follows: 
We begin by providing some background on optimal transport and define the OTC problem in Section \ref{sec:background_on_otc}.
In Section \ref{sec:hmm}, we detail our extension of the OTC problem to HMMs.
In Section \ref{sec:computing}, we establish the connection between the OTC problem and MDPs and state our result regarding \texttt{ExactOTC} for obtaining optimal transition couplings.
A faster, regularized algorithm \texttt{EntropicOTC} for computing optimal transition couplings is described 
in Section \ref{sec:fast_approx_pia}.
In Section \ref{sec:consistency} we present our result regarding the stability of the OTC problem and the statistical consistency of optimal transition couplings computed from data.
In Section \ref{sec:experiments} we describe a simulation study and an application of our algorithms to computer-generated music.
We close with a discussion of our results in Section \ref{sec:conclusion}.
Proofs for all stated results may be found in Section \ref{sec:proofs}.
Finally, an appendix containing some supplementary results and information may be found in the accompanying supplemental material.

\paragraph{Notation.} Let $\bbR_+$ be the non-negative reals and $\Delta_n = \{u \in \mathbb{R}^n_+ | \sum_{i=1}^n u_i = 1\}$ denote the probability simplex in $\bbR^n$.
Given a metric space $\calU$, let $\calM(\calU)$ denote the set of Borel probability measures on $\calU$.
For a vector $u \in \bbR^n$, let $\|u\|_\infty = \max_i |u_i|$ and $\|u\|_1 = \sum_i |u_i|$.
Occasionally we will treat matrices in $\bbR^{n\times n}$ as vectors in $\bbR^{n^2}$.

\section{The Optimal Transition Coupling Problem}
\label{sec:background_on_otc}

The optimal transport problem is defined in terms of couplings and a cost function.
Let $\calU$ and $\calV$ be metric spaces.
Given probability measures $\mu \in \calM(\calU)$ and $\nu \in \calM(\calV)$,
a \emph{coupling} of $\mu$ and $\nu$ is a 
probability measure $\pi \in \calM(\calU \times \calV)$ such that $\pi(A\times \calV) = \mu(A)$ 
and $\pi(\calU\times B) = \nu(B)$ for every measurable $A \subset \calU$ and $B \subset \calV$.
Let $\Pi(\mu, \nu)$ be the set of couplings of $\mu$ and $\nu$.  Let 
$c: \calU \times \calV \rightarrow \bbR$ be a cost function.  We interpret $c(u,v)$ 
as the cost of transporting one unit of a quantity from $u \in \calU$ to $v \in \calV$, or vice versa.
The optimal transport problem associated with $\mu$, $\nu$, and $c$ is the program
\begin{align}\label{eq:ot_problem}
\begin{split}
\mbox{minimize} \quad &\int c\, d\pi \\
\mbox{subject to} \quad &\pi \in \Pi(\mu, \nu).
\end{split}
\end{align}

As formulated, the problem \eqref{eq:ot_problem} makes no particular assumptions about the structure of
the measures $\mu$ and $\nu$.
In most existing applications, $\mu$ and $\nu$ represent the distribution of static quantities 
such as 3-dimensional point clouds, images of handwritten digits, social networks, or measurements of gene expression.
However, in other application areas, $\mu$ and $\nu$ may represent dynamic quantities that vary with
time or some other index.
For example, $\mu$ and $\nu$ might be distributions of words in a block of text, 
the heart rate or blood pressure of a patient over a period of observation, 
or the daily high temperatures at two different locations over some period of time.
In such cases, one may wish to constrain the types of couplings under consideration to ensure that they reflect the structure of the underlying distributions.

As a natural first step toward computational OT for dependent processes, we consider the case where 
$\mu$ and $\nu$ represent stationary Markov chains $X = (X_0, X_1, ...)$ and $Y = (Y_0, Y_1, ...)$ 
with values in finite sets $\X$ and $\Y$, respectively.  Markov chains are a natural choice:
their simple dependence structure is conducive to computation, and they can be studied in terms 
of transition matrices.
Without loss of generality, assume that $\X$ and $\Y$ both contain $d$ points.
{Let $P, Q \in [0,1]^{d \times d}$ be the transition matrices, and let $p, q \in \Delta_d$ be the corresponding stationary 
distributions, of the chains $X$ and $Y$, respectively.}
For a brief overview of the necessary background on Markov chains, we refer the reader to Section \ref{sec:overview_of_proofs}.
For a more in-depth review of Markov chain theory, we refer the reader to \cite{levin2017markov}.
The extension of the OTC problem to hidden Markov models, detailed in Section \ref{sec:hmm}, 
enables us to apply our approach to non-Markovian processes with long-range dependence and Polish alphabets.

\begin{rem}
The optimal transport problem traces its roots back to the physical transportation of goods.
In particular, the optimal coupling offers a means of stochastically matching a supply of some goods to their demand so as to minimize the expected cost of transporting the goods.
In his book on the topic, Villani \citep{villani2008optimal} offers an example of transporting loaves of bread between bakeries and caf\'{e}s to build intuition for the optimal transport problem:

\begin{quote}
Consider a large number of bakeries, producing loaves, that should be transported each morning to caf\'{e}s where consumers will eat them.
The amount of bread that can be produced at each bakery, and the amount that will be consumed at each caf\'{e} are known in advance, and can be modeled as probability measures ... on a certain space ... (equipped with the natural metric such that the distance between two points is the shortest path joining them).
The problem is to find in practice where each unit of bread should go, in such a way as to minimize the total transport cost.
\end{quote}

In our setting, the collections of bakeries and caf\'{e}s correspond to the finite sets $\X$ and $\Y$.
However, unlike the static problem described by Villani, we consider a dynamic problem in which the number of loaves produced and consumed at the bakeries and caf\'{e}s evolves over time.
Indeed, we suppose that the amounts produced and consumed are determined by the distributions of stationary Markov chains $X$ and $Y$.
As we now have dependence over time to consider, the new problem is to synchronize the supply with the demand so as to minimize the total cost of transportation over the long term while still ensuring that the bakery and cafe owners are satisfied.
To make things easier for the delivery driver, one might agree to consider only transport plans that do not change over time (stationary) and under which the deliveries tomorrow only depend on the deliveries today (Markov).
\end{rem}

{In principle, one may apply the standard optimal transport problem in the Markov setting by taking 
$\calU = \X$, $\calV = \Y$ and identifying an optimal coupling of the stationary distributions $p$ and $q$.
However, this marginal approach does not capture the dependence structure of the chains
$X$ and $Y$.
Consider, for example, the case when $\X = \Y = \{0, 1\}$ with single-letter cost $c(x,y) = \delta(x \neq y)$, and
\begin{equation*}
P= \kbordermatrix{&0&1 \\ 0 & \nicefrac{1}{2} & \nicefrac{1}{2} \\ 1 & \nicefrac{1}{2} & \nicefrac{1}{2}}
\quad\mbox{and}\quad Q = \kbordermatrix{&0&1 \\ 0 & 0& 1 \\ 1 & 1 & 0}.
\end{equation*}
Note that the process $X$ corresponding to $P$ is iid, while the process $Y$ corresponding to $Q$ is deterministic (after conditioning on the initial symbol $Y_0$).
Nevertheless, under a marginal analysis, the optimal transport distance between $X$ and $Y$ is zero since 
their stationary distributions $p$ and $q$ each coincide with the $(\nicefrac{1}{2},\nicefrac{1}{2})$ measure.
{\em In general, optimal coupling of stationary distributions yields a joint distribution on the product $\X \times \Y$,
but  it does not provide a means of generating a joint process having $X$ and $Y$ as marginals}.
We seek a variation of \eqref{eq:ot_problem} that captures and preserves the 
stochastic structure, namely stationarity and Markovity, of the processes $X$ and $Y$.}

As an alternative to a marginal analysis, one may consider instead the full measures $\bbP \in \calM(\X^\bbN)$ and
$\bbQ \in \calM(\Y^\bbN)$ of the processes $X$ and $Y$.
Formally, $\bbP$ is the unique probability measure on $\X^\bbN$ such that for any cylinder set 
$[a_i^j] := \{(x_0, x_1, ...) \in \X^\bbN: x_k = a_k, i \leq k \leq j\}$, 
\begin{equation*}
\bbP([a_i^j]) := p(a_i) \prod\limits_{k=i}^{j-1} P(a_k, a_{k+1}) .
\end{equation*}
The measure $\bbQ$ is defined similarly in terms of $q$ and $Q$.
By definition, the measures $\bbP$ and $\bbQ$ are stationary, and Markovian.  
However, a coupling of $\bbP$ and $\bbQ$
on the joint sequence space $\X^\bbN \times \Y^\bbN$ need not be stationary or Markovian.
To illustrate, let $X'$ and $Y'$ be iid Bernoulli$(\nicefrac{1}{2},\nicefrac{1}{2})$ processes, independent of each other, defined on the same probability space.
For $i \geq 0$ let $\tilde{X}_i = X_i'$, and let $\tilde{Y}_i = X_i'$ if $i$ is a power of $2$ and
$\tilde{Y}_i = Y_i'$ otherwise.  It is easy to see that the joint process 
$(\tilde{X}, \tilde{Y}) = (\tilde{X}_0, \tilde{Y}_0), (\tilde{X}_1, \tilde{Y}_1), \ldots$ is a coupling
of $X'$ and $Y'$, but it is neither stationary nor Markovian.
For further examples and discussion of non-Markovian couplings of Markov processes, see 
\cite{ellis1976thedj, ellis1978distances, ellis1980kamae, ellis1980conditions}.

A joint process $(\tilde{X}, \tilde{Y})$ arising from a non-stationary or non-Markovian coupling of $\bbP$ and $\bbQ$ 
has a very different stochastic structure than the processes $X$ and $Y$ themselves, and will be difficult to work with
computationally. 
Thus we wish to exclude such couplings from the feasible set of an optimal transport problem.
An obvious fix is to consider the family $\Pi_{\mbox{\tiny M}}(\bbP, \bbQ)$, defined as the set of couplings $\bbP$ and $\bbQ$ that are stationary and Markovian.
Viewed as processes, elements of $\Pi_{\mbox{\tiny M}}(\bbP, \bbQ)$ correspond to joint processes $(\tilde{X}, \tilde{Y})$ that are stationary, Markov, and satisfy $\tilde{X} \sim X$ and $\tilde{Y} \sim Y$.
While this is a natural choice, the optimal transport cost associated with $\Pi_{\mbox{\tiny M}}$ 
may violate the triangle inequality, 
even when the underlying cost function $c$ is itself a metric, see \cite{ellis1976thedj, ellis1978distances}.
Moreover, the family $\Pi_{\mbox{\tiny M}}(\bbP, \bbQ)$ is not characterized by a simple set of constraints \citep{boyle2009hidden}.
Motivated by the need for ready interpretation and tractable computation, we consider the set of 
stationary Markov chains on $\X \times \Y$ whose transition distributions 
are couplings of those of $X$ and $Y$.  A formal definition is given below.
The resulting set of couplings, called transition couplings, is characterized by a simple set of linear constraints 
involving $P$ and $Q$, and one may show (see Appendix \ref{app:existence}) that the resulting OT cost does satisfy the triangle inequality as long as the underlying cost $c$ does.

In order to reduce notation when considering vectors and matrices indexed by 
elements of $\X \times\Y$, we will indicate only the cardinality of the index set and adopt an 
indexing convention whereby a vector $u \in \mathbb{R}^{d^2}$ is indexed as $u(x, y)$ 
and a matrix $R \in [0,1]^{d^2 \times d^2}$ is indexed as $R((x,y),(x',y'))$ for $(x,y), (x',y') \in \X\times\Y$.
Note also that vectors of the form $R((x,y),\cdot)$ will be regarded as row vectors.

\begin{defn}\label{def:tc_mats}
Let $P$ and $Q$ be transition matrices on finite state spaces $\X$ and $\Y$, respectively.
A transition matrix $R \in [0, 1]^{d^2\times d^2}$ is a \textbf{transition coupling} of $P$ and $Q$ 
if for every paired-state $(x,y) \in \X \times \Y$, the distribution $R((x,y),\cdot)$ is a coupling
of the distributions $P(x, \cdot)$ and $Q(y,\cdot)$, formally  
$R((x,y),\cdot) \in \Pi(P(x,\cdot), Q(y,\cdot))$.
Let $\tc$ denote the set of all transition couplings of $P$ and $Q$.
\end{defn}

Standard results in Markov chain theory ensure that each transition coupling 
$R \in \tc$ admits at least one stationary distribution $r \in \Delta_{d^2}$.
Using $r$ and $R$, one may construct a stationary Markov chain $(\tilde{X}, \tilde{Y}) = \{(\tilde{X}_i, \tilde{Y}_i)\}_{i \geq 0}$ taking values in $\X \times \Y$.
We will also refer to couplings constructed in this way as transition couplings, as stated in the following definition.

\begin{defn}\label{def:transitioncouplings}
Let $X$ and $Y$ be stationary Markov chains with transition matrices $P$ and $Q$ on the finite state spaces $\X$ and $\Y$, respectively.
A stationary Markov chain $(\tilde{X}, \tilde{Y}) = \{(\tilde{X}_i, \tilde{Y}_i)\}_{i \geq 0}$ taking values in $\X\times\Y$ with transition matrix $R \in [0,1]^{d^2 \times d^2}$ is a \textbf{transition coupling} of $X$ and $Y$ if $(\tilde{X}, \tilde{Y})$ is a coupling of $X$ and $Y$ and $R \in \tc$.
\end{defn}

\noindent
Each transition coupling of $X$ and $Y$ may be associated with a process measure $\pi \in \calM_s(\X^\bbN \times \Y^\bbN)$;
let $\tcrv$ denote the set of all such measures induced by transition couplings of $X$ and $Y$.
As the notation suggests, one may readily show that the process measure $\pi$ induced by a transition coupling of $X$ and $Y$ is itself a coupling of the process measures $\bbP$ and $\bbQ$ associated with $X$ and $Y$, respectively.
As all elements of $\tcrv$ are also stationary and Markovian, it follows that $\tcrv \subset \Pi_{\mbox{\tiny M}}(\bbP, \bbQ)$.

The couplings defined in Definition \ref{def:transitioncouplings} are sometimes referred to as ``Markovian couplings'' in the literature \citep{levin2017markov}, and they have been used, for example, to study diffusions 
\citep{banerjee2018coupling, banerjee2016coupling, banerjee2017rigidity}.
We refer to such couplings as ``transition couplings'' in order to distinguish them from elements of 
$\Pi_{\mbox{\tiny M}}(\bbP, \bbQ)$.
Note that $\tcrv \neq \emptyset$ since it contains the independent coupling, namely, the stationary 
Markov chain on $\X \times \Y$ with transition matrix $P\otimes Q((x,y), (x',y')) = P(x,x') \, Q(y,y')$ for all $(x,y)$ and $(x',y')$.
The independent coupling corresponds to a paired chain $(\tilde{X}, \tilde{Y}) = \{(\tilde{X}_i, \tilde{Y}_i)\}_{i \geq 0}$ 
where $\tilde{X}$ and $\tilde{Y}$ are equal in distribution to $X$ and $Y$, respectively, and evolve independently of one another.

A key advantage of considering $\tcrv$ over $\Pi_{\mbox{\tiny M}}(\bbP, \bbQ)$ is that the constraints defining $\tcrv$ are linear and thus computationally tractable (the constraints defining $\Pi_{\mbox{\tiny M}}(\bbP, \bbQ)$ are not).
As we prove in Proposition \ref{prop:transmat_char} below, the set $\tc$ of transition matrices actually characterizes the set $\tcrv$ of transition couplings if $X$ and $Y$ are irreducible.
Stated differently, the condition $R \in \tc$ is sufficient to ensure that a chain $(\tilde{X}, \tilde{Y})$ with transition matrix $R$ is a transition coupling of $X$ and $Y$. 
On the other hand, if $X$ or $Y$ is reducible, a stationary Markov chain with a transition matrix in $\tc$ need not be a coupling of $X$ and $Y$ as the stationary distributions of $P$ and $Q$ are not unique.
This follows from the fact that a transition coupling of reducible chains may admit as marginals any of the chains with transition matrices $P$ or $Q$.
So in order to solve the OTC problem by optimizing over $\tc$ instead of $\tcrv$, we must be careful to avoid this situation.
Proposition \ref{prop:transmat_char} ensures that this cannot occur if $X$ and $Y$ are irreducible.

\begin{restatable}[]{prop}{transmatchar}
\label{prop:transmat_char}
Let $X$ and $Y$ be irreducible stationary Markov chains with transition matrices $P$ and $Q$, respectively.
Then any stationary Markov chain with a transition matrix contained in $\tc$ is a transition coupling of $X$ and $Y$.
\end{restatable}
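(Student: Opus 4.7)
My plan is to show that if $(\tilde X,\tilde Y)$ is a stationary Markov chain on $\X\times\Y$ with transition matrix $R\in\tc$ and (some) stationary distribution $r\in\Delta_{d^2}$, then the process-level marginal laws of $\tilde X$ and $\tilde Y$ coincide with those of $X$ and $Y$, respectively; combined with $R\in\tc$, this is exactly what is required by Definition \ref{def:transitioncouplings}.

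The first step is to identify the one-step conditional law of $\tilde X$. Since $R\in\tc$, for every paired state $(x,y)$ the row $R((x,y),\cdot)$ is a coupling of $P(x,\cdot)$ and $Q(y,\cdot)$, so summing out $y'$ gives $\sum_{y'} R((x,y),(x',y'))=P(x,x')$, which depends only on $x$. Using the Markov property of $(\tilde X,\tilde Y)$ and the tower rule, this yields $\Pr(\tilde X_{t+1}=x'\mid \tilde X_0,\ldots,\tilde X_t)=P(\tilde X_t,x')$, so $\tilde X$ is a Markov chain with transition matrix $P$. An analogous summation over $x'$ shows that $\tilde Y$ is Markov with transition matrix $Q$.

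The second step uses irreducibility to fix the initial distribution. Let $\tilde p(x):=\sum_y r(x,y)$ denote the $\X$-marginal of $r$. Stationarity of $r$ under $R$ and the calculation
\begin{equation*}
\tilde p(x')=\sum_{y'} r(x',y')=\sum_{x,y}r(x,y)\sum_{y'}R((x,y),(x',y'))=\sum_{x,y}r(x,y)P(x,x')=\sum_x\tilde p(x)P(x,x')
\end{equation*}
show that $\tilde p$ is a stationary distribution of $P$. Because $X$ is irreducible, $P$ has a unique stationary distribution, hence $\tilde p=p$. The same argument with roles of the coordinates exchanged gives $\tilde q=q$.

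The final step just assembles the pieces. Since $\tilde X$ is a Markov chain with transition matrix $P$ and initial (stationary) distribution $p$, its finite-dimensional distributions agree with those of $X$, so $\tilde X\eqd X$; similarly $\tilde Y\eqd Y$. Thus $(\tilde X,\tilde Y)$ is a coupling of $X$ and $Y$, and by hypothesis it is stationary, Markov, and has $R\in\tc$ as its transition matrix, so it is a transition coupling of $X$ and $Y$ by Definition \ref{def:transitioncouplings}. The only real obstacle is the second step: without irreducibility, $\tilde p$ could be any stationary distribution of $P$ and so need not equal $p$, which is precisely the failure mode anticipated in the paragraph preceding the proposition and the reason the irreducibility hypothesis is invoked.
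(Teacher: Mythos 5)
Your proof is correct and follows essentially the same route as the paper's: both arguments rest on the row-marginal identity $\sum_{y'}R((x,y),(x',y'))=P(x,x')$ (and its analogue for $Q$) together with the uniqueness of the stationary distribution of an irreducible chain, which forces the $\X$- and $\Y$-marginals of $r$ to equal $p$ and $q$. The only cosmetic difference is that the paper verifies $\pi_k\in\Pi(\bbP_k,\bbQ_k)$ by induction on $k$, whereas you first observe that the coordinate process $\tilde X$ is itself Markov with kernel $P$ and stationary initial law $p$ and then invoke uniqueness of the law of a Markov chain given its kernel and initial distribution.
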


\noindent 
As a result of Proposition \ref{prop:transmat_char}, we may avoid working explicitly with transition couplings of $X$ and $Y$ and work instead with the set of matrices $\tc$.

Letting $c: \X^\bbN \times \Y^\bbN \to \mathbb{R}$ be a cost function defined on sample sequences 
of $X$ and $Y$, we define the \emph{optimal transition coupling (OTC) problem} for $X$ and $Y$ with cost $c$
to be the program
\begin{align}\label{eq:otc_problem_rv}
\begin{split}
\mbox{minimize} \quad &\int c \,  d\pi \\
\mbox{subject to} \quad &\pi \in \tcrv.
\end{split}
\end{align}
The minimum in \eqref{eq:otc_problem_rv}, referred to as the OTC cost, assesses the degree to which the two chains may be ``synced up'' with respect to $c$. 
Any solution to \eqref{eq:otc_problem_rv} describes the joint distribution of the synchronized chains.
Moreover, as a consequence of the pointwise ergodic theorem, any optimal transition coupling $\pi \in \tcrv$ in Problem \eqref{eq:otc_problem_rv} is also optimal with respect to the averaged cost $((x_0, x_1, ...), (y_0, y_1, ...)) \mapsto\limsup_{n\rightarrow\infty} \frac{1}{n} \sum_{i=0}^{n-1} c((x_i, x_{i+1}, ...), (y_i, y_{i+1}, ...))$.
In this sense, the quality of an alignment (equivalently, transition coupling) of the two chains $X$ and $Y$ is assessed based on its long-term average cost.

In the remainder of the paper we assume that $c$ is a single-letter cost,
i.e., $c((x_0, x_1, ...),$ $(y_0, y_1, ...)) = \tilde{c}(x_0, y_0)$ for some cost function 
$\tilde{c}: \X \times \Y \rightarrow \bbR_+$. 
In most of what follows we identify $c$ and $\tilde{c}$, regarding $c$ as a function on $\X \times \Y$ and 
writing $c(x_0,y_0)$ when no confusion will arise.
The consideration of single-letter costs is motivated by our focus on computation and reflects existing work on computational OT, where a cost or metric is defined \emph{a priori} on static observations.
Single letter costs have also been the focus of previous work on optimal transport problems for stationary processes \citep{ornstein1973application,gray1975generalization}.
Our arguments may be easily adapted to the case when the cost depends on a finite number of coordinates.
In particular, any $k$-letter cost $c: \mathcal{X}^k \times \mathcal{Y}^k \rightarrow \mathbb{R}_+$ may be regarded as single-letter for the chains $\tilde{X} = (X_0^{k-1}, X_1^k, ...)$ and $\tilde{Y} = (Y_0^{k-1}, Y_1^k, ...)$ on $\mathcal{X}^k$ and $\mathcal{Y}^k$, respectively.
For single-letter costs, we show in Appendix \ref{app:existence} that optimal transition couplings exist, and that the OTC cost satisfies the triangle inequality whenever $c$ does.
Note that $c$ is necessarily bounded, as $\X$ and $\Y$ are finite.
Moreover, there is no loss in generality in assuming that $c$ is non-negative since our results also hold after adding a constant to $c$.

A primary contribution of this paper, and the focus of Sections \ref{sec:computing} and \ref{sec:fast_approx_pia}, is the development of efficient algorithms for computing solutions to the OTC problem \eqref{eq:otc_problem_rv}.
Note that this problem involves the minimization of a linear objective over the non-convex set $\tcrv$, which makes it difficult to find a solution with off-the-shelf methods.
Proposition \ref{prop:transmat_char} shows that one may optimize instead over the convex polyhedron $\tc$:
informally, the program \eqref{eq:otc_problem_rv} can be reformulated as minimizing 
$\mathbb{E} c(\tilde{X}_0, \tilde{Y}_0)$ over $R \in \tc$, where $(\tilde{X}, \tilde{Y})$ is a stationary Markov chain generated by $R$.
However, this reformulation has a non-convex objective, so some care is needed in order to obtain global solutions.

\subsection{Related Work}\label{sec:related_work}

Stationary couplings of stationary processes, known as \emph{joinings}, were first studied in \citep{furstenberg1967disjointness}.
Distances between processes based on joinings have been proposed in the ergodic theory literature
\citep{gray1975generalization, ornstein1973application, oconnor2021estimation}, but they have been explored primarily as a theoretical tool: 
no tractable algorithms have been proposed for computing such distances exactly.
In the context of Markov chains, coupling methods have been widely used as a tool to establish rates of convergence (see for instance \cite{griffeath1976coupling} or \cite{lindvall2002lectures}).
Examples of optimal Markovian couplings of Markov processes are studied in \cite{ellis1976thedj, ellis1978distances, ellis1980conditions, ellis1980kamae}.
Another line of work has explored total variation-type distances for models with Markovian structure.
For example, \cite{chen2014total} and \cite{kiefer2018computing} develop algorithms for and consider the computability of the total variation distance between hidden Markov models and labeled Markov chains.
Similarly, \citep{daca2016linear} studies the inestimability of the total variation distance between Markov chains.
More recent work has proposed direct adaptations of the optimal transport problem for processes with Markovian structure.
\cite{moulos2020bicausal} studied the bicausal optimal transport problem for Markov chains and its connection to Markov decision processes.
Unlike the OTC problem, in the bicausal transport problem, couplings are not required to be stationary or Markov themselves.
\cite{oconnor2021graph} applies the OTC problem and the tools presented in this paper to the comparison and alignment of graphs.
We also remark that the optimal transition coupling problem appears in the unpublished manuscript \citep{aldous2009april}.

Some existing work \citep{song2016measuring, zhang2000existence} has studied a modified form of the OTC problem that we refer to as the \emph{1-step} transition coupling problem.
In the 1-step transition coupling problem the expected cost is measured with respect to the 1-step 
transition probabilities rather than the stationary distribution of the transition coupling.
In particular, a transition coupling $R \in \tc$ is 1-step optimal if for every $(x, y) \in \X\times\Y$,
\begin{equation*}
R((x,y), \cdot) \in \argmin\limits_{r \in \Pi(P(x, \cdot), Q(y, \cdot))} \,
\sum\limits_{(x', y')} r(x',y') \, c(x',y').
\end{equation*}
Loosely, one can view the OTC problem \eqref{eq:otc_problem_rv} as an infinite-step version of the 1-step OTC problem, wherein a transition coupling is chosen that minimizes the expected cost averaged over an infinite number of steps.
The 1-step transition coupling problem appears in \citep{song2016measuring} where it is used to assess the distance between Markov decision processes.
In another direction, \citep{zhang2000existence} show that solutions to the 1-step transition coupling problem exist for Markov processes on Polish state spaces and lower semicontinuous cost functions.
While the 1-step problem is computationally convenient, in some situations it will yield poor alignments of the two chains of interest.  
We provide an example to illustrate this in Appendix \ref{app:one_step}, showing that the 1-step approach can yield a transition coupling with arbitrarily high expected cost over time.

Other work has considered modifications of standard computational OT techniques for time series that do not necessarily possess Markovian structure.
\cite{cazelles2019wasserstein} study the Wasserstein-Fourier distance, which is 
the Wasserstein distance between normalized power spectral densities, while \cite{muskulus2011wasserstein} suggest using the optimal transport cost between the $k$-block empirical measures constructed from observed samples.
For general observed sequences, \cite{su2018order} consider only couplings that do not disturb the ordering of 
the two sequences too much, as quantified by the inverse difference moment.
Another line of work \citep{cohen2020aligning, cuturi2017soft, janati2020spatio} has explored distances between time series based on dynamic time warping (DTW).
Similar in spirit to OT, the DTW problem seeks an alignment of observations in two time series that respects the ordering of the respective observations and minimizes a total cost.
In contrast to these approaches, we seek a more direct modification of the optimal transport problem itself that best captures the Markovian dynamics.

Entropic regularization in OT traces its roots back to traffic modeling techniques in transportation theory \citep{wilson1969use}.
\cite{cuturi2013sinkhorn} showed how one may solve the entropy-regularized OT problem via a matrix scaling algorithm proposed by \cite{sinkhorn1967diagonal}.
Owing to the increased computational efficiency of matrix multiplication over linear programming, Cuturi's result placed entropic OT as an efficient alternative to standard OT in high-dimensional (large $d$) scenarios.
\cite{altschuler2017near} provided further analysis of Sinkhorn's algorithm, showing that for appropriate choice of regularization coefficient and number of iterations, it yields an approximation of the unregularized OT cost in near-linear time.
More recent work \citep{dvurechensky2018computational, lin2019efficient, guo2020fast} has considered alternative algorithms based on stochastic gradient decent for solving entropy-regularized OT problems.
An entropy rate-regularized optimal joining problem and its statistical properties are studied \cite{oconnor2021estimation}.
We remark that an upper bound on the entropy of each transition distribution of a Markov chain (as considered in this paper) implies an upper bound on the entropy rate of the chain.

\section{Extension of OTC to Hidden Markov Models}
\label{sec:hmm}
Markov models are often employed as components of more complex models for sequential observations.
Hidden Markov models (HMMs) are a widely used variant of the Markov model in which observations are modeled as conditionally independent random emissions arising from a latent Markov chain.
HMMs have been applied successfully to a variety of problems including speech recognition \citep{bahl1986maximum,varga1990hidden}, 
text segmentation \citep{yamron1998hidden}, and modeling disease progression \citep{williams2020bayesian}.
For a detailed overview, we refer the reader to the text \citep{zucchini2017hidden}.

Formally, a HMM may be characterized by a pair $(X, \phi)$ where $X = (X_0, X_1, ...)$ is an unobserved Markov chain taking values in a finite set $\X$, and a function $\phi: \X \rightarrow \calM(\calU)$ that maps each state $x \in \X$ to a distribution on a
fixed observation space $\calU$.
The pair $(X, \phi)$ gives rise to a stationary process $U = (U_0, U_1, ...)$ where $U_0, U_1, \ldots \in \calU$ are 
conditionally independent given $X$ with $U_i \sim \phi(X_i)$ for $i \geq 0$.
Note that the process $U$ may exhibit long-range dependence.
In this way, HMMs provide a simple means of modeling sequences with more complex dependence structures.

The OTC problem may be extended to processes with hidden Markov structure as follows.
Let $(X, \phi)$ and $(Y, \psi)$ be a pair of HMMs with observation spaces $\calU$ and $\calV$, respectively, and let $c: \calU \times \calV \rightarrow \mathbb{R}_+$.
Note that the cost $c$ is specified on the observed spaces $\calU$ and $\calV$ rather than the state spaces of the 
unobserved Markov chains $X$ and $Y$.
However, one may extend $c$ to a cost on $\X \times \Y$ by optimally coupling the emission distributions 
$\phi(x)$ and $\psi(y)$ for every pair $(x, y) \in \X \times \Y$.
In more detail, let $\theta: \X\times\Y \rightarrow \calM(\calU \times \calV)$ and 
$c': \X\times\Y \rightarrow \mathbb{R}_+$ be defined by
\begin{equation*}
\theta(x,y) \in \argmin\limits_{\pi \in \Pi(\phi(x), \psi(y))} \int c \, d\pi \quad\quad \mbox{and} \quad\quad c'(x,y) = \min\limits_{\pi \in \Pi(\phi(x), \psi(y))} \int c \, d\pi .
\end{equation*}
In other words, we define the functions $\theta: \X\times\Y \rightarrow \calM(\calU \times \calV)$ and $c' : \X\times\Y \rightarrow \mathbb{R}_+$ such that for every $(x,y) \in \X\times\Y$, $\theta(x, y)$ is an optimal coupling and $c'(x,y)$ is the OT cost of the emission distributions $\phi(x)$ and $\psi(y)$ with respect to $c$.
One may then find an optimal transition coupling $(X', Y')$ of $X$ and $Y$ with respect to $c'$ as 
in problem \eqref{eq:otc_problem_rv}.
The expected cost of this transition coupling corresponds to a cost between the 
HMMs $(X, \phi)$ and $(Y, \psi)$ taking the original cost $c$ into account.
Moreover, the pair $((X', Y'), \theta)$ defines an optimal joint HMM of $(X, \phi)$ and $(Y, \psi)$ from which samples in $\calU \times \calV$ may be drawn.

Leveraging the intuition from the standard OTC problem, the optimal transition coupling $((X', Y'), \theta)$ may be thought of as an alignment of the two HMMs $(X, \phi)$ and $(Y, \psi)$ with respect to $c$.
In this way, we may apply the OTC problem to any processes that can be embedded as or are well-approximated by HMMs.
Before proceeding, we remark that \cite{chen2019aggregated} also proposes an OT problem for HMMs based on coupling the emission distributions of the two HMMs of interest.
However, the latent Markov chains of either HMM are coupled using standard OT after a registration step.
Our approach captures the Markovity of the latent sequences more directly and allows one to generate new samples from the coupled HMM.

\section{Computing Optimal Transition Couplings}
\label{sec:computing}

In this section, we turn our attention toward our primary goal of developing tractable algorithms for solving 
the OTC problem \eqref{eq:otc_problem_rv}.
As discussed in Section \ref{sec:background_on_otc}, the OTC problem is a non-convex, constrained optimization problem and thus there is little hope of obtaining global solutions via generic optimization algorithms.
Adopting a more tailored approach, we draw a connection between the OTC problem and Markov decision processes (MDP).
Having established this connection, we may leverage the wealth of algorithms for obtaining global solutions to MDPs to solve the OTC problem.
As we will show, the framework of policy iteration naturally lends itself to our problem and leads to a computationally tractable algorithm combining standard MDP techniques with OT solvers.

\subsection{Connection to Markov Decision Processes}
\label{sec:connection_to_mdp}

A Markov decision process is characterized by a 4-tuple $(\calS, \calA, \calP, c')$ consisting of a state space $\mathcal{S}$, an action space $\calA = \bigcup_s \calA_s$ where $\calA_s$ is the set of allowable actions in state $s$, a set of transition distributions $\calP = \{p(\cdot | s, a): s \in \calS, a \in \calA\}$ on $\calS$, and a cost function $c': \mathcal{S} \times \calA \rightarrow\mathbb{R}$.
At each time step the process occupies a state $s \in \mathcal{S}$ and an agent chooses an action $a \in \calA_s$; the
process incurs a cost $c'(s, a)$ and then moves to a new state according to the distribution $p(\cdot | s, a)$.
Informally, the goal of the agent is to choose actions to minimize her average cost.
The behavior of an agent is described by a family $\gamma = \{ \gamma_s(\cdot) : s \in  \mathcal{S} \}$ of 
distributions $\gamma_s(\cdot) \in \calM(\mathcal{A}_s)$ on the set of admissible actions, which is known as a {\em policy}. 
An agent following policy $\gamma$ chooses her next
action according to $\gamma_s(\cdot)$ whenever the system is in state $s$, independently of her previous actions.

It is easy to see that, in conjunction with the transition distributions $\calP$,
every policy $\gamma$ induces a collection of Markov chains on the state space $\mathcal{S}$ indexed by initial states $s \in \mathcal{S}$.
In the average-cost MDP problem the goal is to identify a policy for which the induced Markov chain minimizes the limiting average 
cost, namely a policy $\gamma$ minimizing
\begin{equation}\label{eq:mdp_average_cost}
\overline{c}_\gamma(s) := \lim\limits_{T \rightarrow\infty} \frac{1}{T}\sum\limits_{t=1}^T \mathbb{E}_\gamma \left[c'(s_t, a_t) \bigg| s_0 = s\right],
\end{equation}
for each $s \in \calS$.
Note that the expectation in \eqref{eq:mdp_average_cost} is taken with respect to the Markov chain induced by $\gamma$.
In general, the limiting average cost $\overline{c}_\gamma(s)$ will depend on the initial state $s$, but if $\gamma$ induces an ergodic chain then the average cost will be constant. 
If all policies induce ergodic Markov chains, the MDP is referred to as ``unichain''; otherwise the MDP is classified as ``multichain''.
We refer the reader to \cite{puterman2005markov} for more details on MDPs.

The OTC problem \eqref{eq:otc_problem_rv} may readily be recast as an MDP.
In detail, let the state space $\mathcal{S} = \X \times \Y$, and let $s = (x,y)$ denote an element of $\mathcal{S}$. 
Define the set of admissible actions in state $s$ to be the corresponding set of row couplings 
$\calA_s = \tcrow$. 
For each state $s$ and action $r_s \in \calA_s$ define the transition distribution $p(\cdot | s, r_s) := r_s(\cdot)$, and the cost function 
$c'(s, r_s) = c(s) = c(x,y)$.  Note that $c'$ is independent of the action $r_s$. 
We refer to this MDP as TC-MDP.

Any policy $\gamma$ for TC-MDP specifies distributions over $\tcrow$ for each $(x,y) \in \X\times\Y$ and thus corresponds to a single distribution over $\tc$ that governs the random actions of the agent.
In TC-MDP it suffices to consider only deterministic policies $\gamma$, namely policies such that for each state $s = (x,y)$ the distribution $\gamma_s(\cdot)$ is a point mass at unique element of $\calA_{s} = \tcrow$.

\begin{restatable}[]{prop}{deterministicpolicy}
\label{prop:deterministic_policy}
Let $\gamma$ be a policy for TC-MDP.
Then there exists a deterministic policy $\tilde{\gamma}$ such that $\overline{c}_\gamma(s) = \overline{c}_{\tilde{\gamma}}(s)$ for every $s \in \calS$.
\end{restatable}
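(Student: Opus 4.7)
The plan is to exploit the crucial feature of TC-MDP that the cost $c'(s,r_s) = c(x,y)$ depends only on the state $s = (x,y)$ and not on the chosen action $r_s$. This means that the only way a policy influences the incurred cost is through the marginal distribution of states it induces over time. Since any randomized policy can be replaced by a deterministic policy producing the same state distribution, the equivalence should follow.

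Concretely, I would proceed as follows. Given an arbitrary policy $\gamma = \{\gamma_s : s \in \calS\}$, define for each state $s = (x,y)$ the averaged action
\begin{equation*}
\bar{r}_s \ := \ \int_{\calA_s} r \, d\gamma_s(r) \ \in \ \bbR^{d^2}.
\end{equation*}
The first step is to observe that $\bar{r}_s \in \calA_s = \tcrow$, which holds because $\tcrow$ is a convex subset of the probability simplex (couplings of fixed marginals form a convex polytope), and hence closed under expectations with respect to $\gamma_s$. Now let $\tilde{\gamma}$ be the deterministic policy assigning point mass to $\bar{r}_s$ at each state $s$.

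The second step is to show the two policies induce the same Markov chain on $\calS$. Under $\gamma$, at state $s_t$, we first draw $r \sim \gamma_{s_t}$ independently of the past, then draw $s_{t+1} \sim r$, so
\begin{equation*}
\bbP_\gamma(s_{t+1} = s' \mid s_t = s) \ = \ \int r(s') \, d\gamma_s(r) \ = \ \bar{r}_s(s'),
\end{equation*}
which coincides with $\bbP_{\tilde{\gamma}}(s_{t+1} = s' \mid s_t = s)$. Induction then gives $\bbP_\gamma(s_t = \cdot \mid s_0 = s) = \bbP_{\tilde{\gamma}}(s_t = \cdot \mid s_0 = s)$ for every $t$ and $s$.

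The third step is to conclude by combining this with the action-independence of the cost. Because $c'(s_t, a_t) = c(s_t)$ depends only on $s_t$,
\begin{equation*}
\bbE_\gamma\bigl[c'(s_t, a_t) \mid s_0 = s\bigr] \ = \ \sum_{s'} c(s') \, \bbP_\gamma(s_t = s' \mid s_0 = s) \ = \ \bbE_{\tilde{\gamma}}\bigl[c'(s_t, a_t) \mid s_0 = s\bigr],
\end{equation*}
and averaging over $t$ yields $\overline{c}_\gamma(s) = \overline{c}_{\tilde{\gamma}}(s)$ for every $s \in \calS$. The main obstacle, such as it is, is just being careful about the first step — that averaging a randomized action over $\gamma_s$ produces a bona fide coupling in $\calA_s$ — but this reduces to convexity of $\Pi(P(x,\cdot), Q(y,\cdot))$ and is immediate.
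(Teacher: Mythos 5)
Your proposal is correct and follows essentially the same route as the paper: define the averaged action $\bar{r}_s = \int_{\calA_s} r\, d\gamma_s(r)$, use convexity of $\tcrow$ to see it is a valid action, and show the induced state marginals (hence the state-only costs) coincide at every time step. The paper simply unrolls your induction explicitly by integrating out the actions one step at a time in the joint action-state measure.
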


Thus optimization over $\tc$ is equivalent to optimization over deterministic policies.
Importantly, {\em a deterministic policy corresponds to a fixed transition coupling matrix} $R \in \tc$.
Going forward, we refer to $R \in \tc$ directly instead of the equivalent deterministic policy $\tilde{\gamma}$ in our notation.
We note that, even when $X$ and $Y$ are ergodic, the same may not be true of the stationary Markov chain induced by a transition coupling matrix $R \in \tc$ (see Appendix \ref{sec:redicible_tc}).
Specifically, a single element of $\tc$ may have multiple stationary distributions and thus give rise to multiple stationary Markov chains depending on the initial state $s \in \calS$.
Thus TC-MDP is classified as multichain.
Finally, we may formalize the relationship between the OTC problem and TC-MDP.

\begin{restatable}[]{prop}{otcismdp}
\label{prop:otc_is_mdp}
If $X$ and $Y$ are irreducible, then any $R \in \Pi(P, Q)$ that is an optimal policy for TC-MDP corresponds to an optimal 
coupling $\pi_R \in \tcrv$ with expected cost $\min_{s \in \calS} \overline{c}_R(s)$.
\end{restatable}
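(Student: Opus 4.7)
The plan is to pass through the reformulation of the OTC problem in terms of transition coupling matrices. By Proposition \ref{prop:transmat_char} and the irreducibility of $X$ and $Y$, the feasible set $\tcrv$ of \eqref{eq:otc_problem_rv} may be identified with the set of pairs $(R, r)$ consisting of $R \in \tc$ together with a stationary distribution $r \in \Delta_{d^2}$ of $R$, with corresponding expected cost $\sum_s r(s)\, c(s)$. Proposition \ref{prop:deterministic_policy} likewise lets us identify deterministic TC-MDP policies with elements of $\tc$, so ``$R$ is an optimal TC-MDP policy'' means $\overline{c}_R(s) \leq \overline{c}_{R'}(s)$ for every $s \in \calS$ and every $R' \in \tc$.

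The key tool is the standard average-cost MDP identity: for any $R \in \tc$, $\overline{c}_R(s) = \sum_{s'} R^*(s,s')\, c(s')$, where $R^*(s,s') := \lim_{T \to \infty} T^{-1} \sum_{t=0}^{T-1} R^t(s,s')$ is the Cesaro limit. This limit exists for every finite-state Markov chain, and each row $R^*(s, \cdot)$ is itself a stationary distribution of $R$. Moreover, if $r$ is any stationary distribution of $R$, then $r R^* = r$ (since $rR = r$ implies $rR^t = r$ for all $t$), so $\sum_s r(s)\, \overline{c}_R(s) = \sum_s r(s)\, c(s)$.

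To construct $\pi_R$, fix an optimal TC-MDP policy $R$ and pick $s^* \in \argmin_{s \in \calS} \overline{c}_R(s)$. Setting $r^* := R^*(s^*, \cdot)$ yields a stationary distribution of $R$ with $\sum_s r^*(s)\, c(s) = \overline{c}_R(s^*) = \min_s \overline{c}_R(s)$. By Proposition \ref{prop:transmat_char}, the stationary Markov chain generated by $(R, r^*)$ is a transition coupling of $X$ and $Y$, so it induces a measure $\pi_R \in \tcrv$ with the asserted expected cost. For optimality of $\pi_R$ in \eqref{eq:otc_problem_rv}, take any $\pi \in \tcrv$ associated with some pair $(R', r')$; combining the identities above yields $\int c \, d\pi = \sum_s r'(s)\, c(s) = \sum_s r'(s)\, \overline{c}_{R'}(s) \geq \min_s \overline{c}_{R'}(s) \geq \min_s \overline{c}_R(s) = \int c \, d\pi_R$, where the final inequality uses the pointwise TC-MDP optimality of $R$.

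The main technical subtlety is that TC-MDP is in general multichain, so a given $R$ may admit several stationary distributions and $\overline{c}_R(s)$ may depend on the starting state. This is precisely why the minimum over $s$ appears in the statement: among the stationary distributions of $R$, the one producing the OTC-optimal coupling is the specific Cesaro limit $R^*(s^*, \cdot)$ for $s^*$ achieving $\min_s \overline{c}_R(s)$, rather than an arbitrary stationary distribution. The irreducibility of $X$ and $Y$ enters only through Proposition \ref{prop:transmat_char}, which certifies that the constructed chain is genuinely a coupling of $X$ and $Y$; without it, a stationary chain with transition matrix in $\tc$ could fail to have the correct marginals.
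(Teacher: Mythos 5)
Your proof is correct, and its overall skeleton matches the paper's: identify elements of $\tcrv$ with pairs $(R,\lambda)$ via Proposition \ref{prop:transmat_char}, express $\overline{c}_R(s)$ through the Cesaro limit $\overline{R}$, and build $\pi_R$ from the row $\overline{R}(s^*,\cdot)$ for $s^*$ attaining $\min_s \overline{c}_R(s)$. Where you genuinely diverge is in the key step showing that no stationary distribution of any $R' \in \tc$ can beat $\min_s \overline{c}_R(s)$. The paper routes this through Lemma \ref{lemma:extreme_points_of_stat_dist}: it decomposes $\Lambda(R')$ into the convex hull of the stationary distributions of the recurrent classes and invokes the fact that a linear functional on a compact polytope is minimized at an extreme point. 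You instead use the elementary identity $r\overline{R'} = r$ for any stationary $r$ of $R'$, which gives $\langle c, r\rangle = \sum_s r(s)\,\overline{c}_{R'}(s) \geq \min_s \overline{c}_{R'}(s)$ since $r$ is a probability vector; combined with the pointwise optimality $\overline{c}_R(s) \leq \overline{c}_{R'}(s)$ this closes the argument. Your version is shorter and avoids any appeal to the recurrent-class structure or to \cite[Theorem A.5]{puterman2005markov} beyond existence of the Cesaro limit; the paper's version buys a slightly more structural statement (the minimizing stationary distribution can always be taken to be the unique stationary distribution of a single recurrent class), which it reuses implicitly when constructing $\pi_*$. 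Both are sound, and your handling of the multichain subtlety and of where irreducibility of $X$ and $Y$ enters is accurate.
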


\subsection{Policy Iteration}
\label{sec:policy_iteration}

Now that we have shown that the OTC problem can be viewed as an MDP, we can leverage existing algorithms for MDPs to obtain solutions.
To this end, we propose to adapt the framework of policy iteration \citep{howard1960dynamic}.
To facilitate our discussion, in what follows, we regard the cost function $c$ and limiting average cost $\overline{c}_R$ 
as vectors in $\bbR_+^{d^2}$.
For each $R \in \tc$, standard results \citep{puterman2005markov} guarantee that the limit $\overline{R} := \lim_{T\rightarrow\infty} T^{-1} \sum_{t=0}^{T-1} R^t$ exists.
When $R$ is aperiodic and irreducible, the Perron-Frobenius theorem ensures 
that $\overline{R} = \lim_{T\rightarrow\infty} R^T$ and the rows of $\overline{R}$ are equal to the stationary distributions of $R$.

In policy iteration, one repeatedly {evaluates} and {improves} policies.
In the context of TC-MDP, for a given transition coupling matrix $R \in \tc$ the evaluation step computes the average cost (\emph{gain}) vector $g = \overline{R} \, c$ and the total extra cost (\emph{bias}) vector $h = \sum_{t=0}^\infty R^t(c - g)$.
In practice, $g$ and $h$ may be obtained by solving a linear system of equations rather than evaluating infinite sums (see Algorithm \ref{alg:exact_pe}) \citep{puterman2005markov}.
The improvement step selects a new transition coupling matrix $R'$ that minimizes $R' \, g$ or, 
if no improvement is possible, $R' \, h$ in an element-wise fashion (see Algorithm \ref{alg:exact_pi}).
In more detail, we may select a transition coupling $R'$ such that for each $(x,y)$ 
the corresponding row $r = R'((x,y),\cdot)$ minimizes $r g$ (or $r h$) over couplings $r \in \tcrow$.
To denote the element-wise argmin, we write $\elemargmin_{R \in \tc} R \, g$ (or $R \, h$).
The improved matrix $R'$ is obtained by solving $d^2$ OT problems with marginals 
$P(x,\cdot)$ and $Q(y,\cdot)$ and cost $g$ (or $h$).
This special feature of TC-MDP enables us to find improved transition coupling matrices in a computationally efficient manner despite working with an infinite action space.
Once a fixed point in the evaluation and improvement process is reached, the procedure terminates.
The resulting algorithm will be referred to as \texttt{ExactOTC} (see Algorithm \ref{alg:pia}).
We initialize Algorithm \ref{alg:pia} to the independent transition coupling $P \otimes Q$, defined in Section \ref{sec:background_on_otc}.

\begin{center}
\begin{minipage}{0.64\textwidth}
\begin{algorithm}[H]
\DontPrintSemicolon
\SetAlgoLined
$R_0 \leftarrow P \otimes Q$, $n \leftarrow 0$\;
 \While{not converged}{
  \tcc{transition coupling evaluation}
  $(g_n, h_n) \leftarrow \texttt{ExactTCE}(R_n)$\;
  \tcc{transition coupling improvement}
  $R_{n+1} \leftarrow \texttt{ExactTCI}(g_n, h_n, R_n, \tc)$\;
  $n \leftarrow n+1$\;
 }
 \Return{$R_n$}
 \caption{\texttt{ExactOTC}}\label{alg:pia}
\end{algorithm}
\end{minipage}
\end{center}
\begingroup
\setcounter{tmp}{\value{algocf}}
\setcounter{algocf}{0} 
\renewcommand{\thealgocf}{1\alph{algocf}}
\begin{minipage}{0.52\textwidth}
\begin{algorithm}[H]
\SetKwInOut{Input}{input}
\DontPrintSemicolon
\SetAlgoLined
\Input{$R$}
 Solve for $(g, h, w)$ such that
  \begin{equation*}
  \left[\begin{array}{ccc} I - R & 0 &0 \\ I & I - R & 0 \\ 0 & I & I - R \end{array}\right] \left[\begin{array}{c} g \\ h \\ w \end{array}\right] = \left[\begin{array}{c} 0 \\ c \\ 0 \end{array}\right]
  \end{equation*}

  \Return{$(g, h)$}
  \caption{\texttt{ExactTCE}}\label{alg:exact_pe}
\end{algorithm}
\end{minipage}
\endgroup
\setcounter{algocf}{\value{tmp}}
\begingroup
\setcounter{tmp}{\value{algocf}}
\setcounter{algocf}{1} 
\renewcommand{\thealgocf}{1\alph{algocf}}
\begin{minipage}[c]{0.42\linewidth}
\begin{algorithm}[H]
\SetKwInOut{Input}{input}
\DontPrintSemicolon
\SetAlgoLined 
\Input{$g, h, R_0, \Pi$}
\tcc{element-wise argmin}
 	 $R' \leftarrow \elemargmin_{R \in \Pi} R g$\;
  \If{$R'g = R_0 g$}{
 		 $R' \leftarrow \elemargmin_{R \in \Pi} R h$\;
 	\If{$R'h = R_0 h$}{
 		\Return{$R_0$}\;
 		}
 	\Else{
 		\Return{$R'$}\;
 	}
 	}
 \Else{
	 \Return{$R'$}\;
	}
 \caption{\texttt{ExactTCI}}\label{alg:exact_pi} 
\end{algorithm}
\end{minipage} 
\endgroup
\setcounter{algocf}{\value{tmp}}

For finite state and action spaces, policy iteration is known to yield an optimal policy for the average-cost MDP in a finite number of steps \citep{puterman2005markov}.
While policy iteration may fail to converge for general compact action spaces 
\citep{dekker1987counter, schweitzer1985undiscounted, puterman2005markov}, as is the case for TC-MDP,
we may exploit the polyhedral structure of $\tc$ to establish the following convergence result.

\begin{restatable}[]{thm}{convergenceofpia}
\label{thm:convergence_of_pia}
Algorithm \ref{alg:pia} converges to a solution $(g^*, h^*, R^*)$ of TC-MDP in a finite number of iterations.
Moreover, if $X$ and $Y$ are irreducible, $R^*$ is the transition matrix of an optimal transition coupling of $X$ and $Y$.
\end{restatable}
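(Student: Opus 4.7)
The plan is to reduce the convergence analysis to the standard termination theory for multichain policy iteration on finite-action MDPs (cf.\ Puterman 2005, \S 9.2), with the main technical point being that the action space of TC-MDP is a polytope rather than a finite set, so one must work at vertices.

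First, I would establish a vertex reduction. For each $(x,y)$, the row action space $\Pi(P(x,\cdot), Q(y,\cdot))$ is the transportation polytope associated to the marginals $P(x,\cdot)$ and $Q(y,\cdot)$, which has finitely many vertices. Since the joint constraint defining $\tc$ factors row-by-row, $\tc$ itself is a polytope whose vertex set is the Cartesian product of the row-vertex sets; in particular, $\tc$ has only finitely many vertices. The policy improvement step \texttt{ExactTCI} solves $d^2$ independent linear programs (elementwise argmin of $Rg$ or $Rh$), each of which attains its minimum at a vertex. Hence I may assume, without loss of generality, that every iterate $R_n$ produced by Algorithm \ref{alg:pia} is a vertex of $\tc$.

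Next, I would show strict lexicographic improvement of the pair $(g_n, h_n)$. Policy evaluation (Algorithm \ref{alg:exact_pe}) produces $(g_n, h_n)$ satisfying $g_n = R_n g_n$ and $g_n + h_n = c + R_n h_n$. By the standard multichain policy improvement lemma, if the gain-improvement step returns $R_{n+1}$ with $R_{n+1} g_n \leq R_n g_n$ elementwise and a strict inequality in at least one coordinate, then $g_{n+1} \leq g_n$ with at least one strict inequality. Otherwise the bias-improvement step is invoked, in which case $R_{n+1} g_n = R_n g_n$ while $R_{n+1} h_n \leq R_n h_n$ with at least one strict inequality; using the bias relation $g + h = c + Rh$, one concludes $g_{n+1} = g_n$ and $h_{n+1} \leq h_n$ with at least one strict inequality. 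Thus $(g_n, h_n)$ is strictly decreasing in the lexicographic order on $\bbR^{d^2} \times \bbR^{d^2}$.

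Since each $R_n$ is a vertex of the polytope $\tc$, the finite vertex set combined with strict lexicographic decrease implies that no vertex can be visited twice, so Algorithm \ref{alg:pia} terminates in finitely many iterations. At termination one has $R^* \in \elemargmin_{R \in \tc} R g^*$ and $R^* \in \elemargmin_{R \in \tc} R h^*$, which together with $g^* = R^* g^*$ and $g^* + h^* = c + R^* h^*$ yields the multichain MDP optimality equations; hence $R^*$ is a gain-optimal policy for TC-MDP. Finally, if $X$ and $Y$ are irreducible, Proposition \ref{prop:otc_is_mdp} applies and identifies $R^*$ as the transition matrix of an optimal transition coupling of $X$ and $Y$.

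The main obstacle I anticipate is the multichain subtlety in the improvement step: the bias-minimization is performed over all of $\tc$ rather than only over the gain-optimal face, so one must verify that when the gain step fails to improve, the subsequent bias update cannot \emph{worsen} the gain vector (i.e., $R_{n+1}g_n = R_n g_n$ forces $g_{n+1} = g_n$). This is where the elementwise nature of the improvement and the fact that $R_n g_n$ was already the elementwise minimum of $R g_n$ are essential. A secondary point requiring care is confirming that the row-wise vertex structure lifts to a genuine vertex of $\tc$, which follows from the product structure of the constraint set.
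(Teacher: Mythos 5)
Your overall route is the same as the paper's: reduce to the finitely many extreme points of the polytope $\tc$ (using that $\tc = \bigotimes_{(x,y)} \tcrow$ and that \texttt{ExactTCI} solves linear programs whose minima are attained at extreme points), then run the multichain policy-iteration convergence argument on the resulting finite-action problem, and finish by characterizing fixed points via the multichain optimality equations and invoking Proposition \ref{prop:otc_is_mdp} under irreducibility. The only structural difference is that the paper treats the finite-action termination as a black box --- it observes that the iterates of Algorithm \ref{alg:pia} coincide with those of policy iteration for the restricted MDP with action sets $\calE(\calA_s)$ and cites Theorem 9.2.3 of \cite{puterman2005markov} --- whereas you re-derive the underlying strict lexicographic improvement of $(g_n, h_n)$ yourself.

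Because you chose to prove the improvement lemma rather than cite it, the obstacle you flag is a real gap in your argument as written, and your proposed resolution does not close it. When the gain step fails to improve, \texttt{ExactTCI} sets $R_{n+1} = \elemargmin_{R\in\tc} Rh_n$ over \emph{all} of $\tc$, and the fact that $R_n g_n$ is the elementwise minimum of $Rg_n$ gives only $R_{n+1}g_n \geq R_n g_n = g_n$ --- the opposite direction from the $R_{n+1}g_n \leq g_n$ needed for the standard monotonicity chain $g_{n+1} = \overline{R}_{n+1}c \leq \overline{R}_{n+1}g_n \leq g_n$. So your assertion that $R_{n+1}g_n = R_n g_n$, and hence $g_{n+1} = g_n$, in the bias-improvement case does not follow; and the case cannot be dismissed, since $g_n$ may be non-constant (transition couplings of irreducible chains can be reducible, see Appendix \ref{sec:redicible_tc}). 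The textbook multichain algorithm avoids this by restricting the bias minimization to the gain-optimal set $\{R : Rg_n = R_n g_n\}$; to complete your version you must either impose that restriction or supply a separate argument that the unrestricted argmin lands in that set. The paper sidesteps this by delegating termination to Puterman's theorem for the restricted MDP rather than re-proving the improvement lemma.
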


Recall from Proposition \ref{prop:otc_is_mdp} that an optimal solution to TC-MDP necessarily yields an optimal solution to \eqref{alg:pia}.
Thus Theorem \ref{thm:convergence_of_pia} ensures that a solution to the OTC problem can be obtained from Algorithm \ref{alg:pia} in a finite number of iterations.
A proof of this result can be found in Section \ref{app:pia_convergence}.

\begin{rem}
One may in principle adapt other MDP algorithms to solve the OTC problem.
However, the standard alternatives to policy iteration either do not admit a computationally tractable implementation (e.g. linear programming) or are not as conducive to a convergence analysis (e.g. value iteration).
We choose policy iteration because it balances both of these features, admitting a practical implementation while also enabling a theoretical convergence analysis.
We acknowledge that OTC solvers based on policy iteration may not be preferable in every scenario and leave a detailed exploration of other MDP algorithms for the OTC problem to future work.
\end{rem}

\section{Fast Approximate Policy Iteration}\label{sec:fast_approx_pia}

The simplicity of Algorithm \ref{alg:pia} in conjunction with the theoretical guarantee of 
Theorem \ref{thm:convergence_of_pia} make it an appealing method for solving the OTC 
problem when the cardinality $d$ of the state spaces of $X$ and $Y$ is small.
However, each call to Algorithm \ref{alg:exact_pe} involves solving a system of $3d^2$ linear equations, requiring a total of $\calO(d^6)$ operations.
Furthermore, each call to Algorithm \ref{alg:exact_pi} entails solving $d^2$ linear programs each with $\calO(d)$ constraints, which can be accomplished in a total time of $\calO(d^5 \log d)$.
We note that a similar dependence on the dimension of each coupling is observed in exact OT algorithms, 
such as the network simplex algorithm in \citep{peyre2019computational}.
For even moderate values of $d$, this may be too slow for practical use.

To alleviate the poor scaling with the dimension of the couplings in the standard OT problem, one may use entropic regularization, 
whereby a negative entropy term is added to the OT objective.
\cite{cuturi2013sinkhorn} showed that solutions to the entropy-regularized OT problem may be obtained efficiently via Sinkhorn's algorithm \citep{sinkhorn1967diagonal}.
More recently, \cite{altschuler2017near} proved that Sinkhorn's algorithm yields an approximation of the OT cost with error bounded by $\varepsilon$ in near-linear time with respect to the dimension of the couplings under consideration.
Subsequent work \citep{dvurechensky2018computational, lin2019efficient, guo2020fast} has proposed and studied alternative algorithms for approximating the optimal transport cost, each with runtime scaling at least linearly with the dimension of the couplings in the problem.
One might hope that a similar dependence on the size of the elements of $\tc$ may be achievable for the OTC problem by employing regularization.

In this section, we extend entropic regularization techniques to the OTC problem.  This extension leads to 
an approximate algorithm that 
runs in $\tilde{\calO}(d^4)$ time per iteration, where $\tilde{\calO}(\cdot)$ omits non-leading poly-logarithmic factors.
This complexity is nearly-linear in the dimension $d^4$ of the transition couplings.
We first propose a truncation-based approximation of the \texttt{ExactTCE} transition coupling evaluation algorithm, which we call \texttt{ApproxTCE}.
When the transition coupling to be evaluated satisfies a simple regularity condition,
we show that one can obtain approximations of the gain and bias from \texttt{ApproxTCE} with error bounded by $\varepsilon$ in $\tilde{\calO}(d^4 \log \varepsilon^{-1})$ time.

Mirroring the derivation of entropic OT, we then propose an entropy-regularized approximation of the 
\texttt{ExactTCI} transition coupling improvement algorithm, called \texttt{EntropicTCI}.
We perform a new analysis of the Sinkhorn algorithm (described in Section \ref{sec:entropic_regularization}) that is tailored to transition coupling improvement to show that \texttt{EntropicTCI} yields an improved transition coupling with error bounded by $\varepsilon$ in $\tilde{\calO}(d^4 \varepsilon^{-4})$ time.
Combining these two algorithms, we obtain the \texttt{EntropicOTC} algorithm, which runs in $\tilde{\calO}(d^4 \varepsilon^{-4})$ time per iteration.
We provide empirical support for these theoretical results through a simulation study in Section \ref{sec:experiments}.
We find that the improved efficiency at each iteration of \texttt{EntropicOTC} leads to a much faster runtime in practice as compared to \texttt{ExactOTC}.
Our experiments also show that \texttt{EntropicOTC} yields an expected cost that closely approximates the unregularized OTC cost.

\subsection{Constrained Optimal Transition Coupling Problem}
We begin by defining a constrained set of transition couplings.
Let $\calK(\cdot \| \cdot)$ be the Kullback-Leibler (KL) divergence defined for $u, v \in \Delta_{d^2}$ 
by $\calK(u \| v) = \sum_{s} u(s) \log(u(s) / v(s))$ with the convention that $0 \log(0/0) = 0$ and $\calK(u \| v) = + \infty$
if $u(s) > 0$ and $v(s) = 0$ for some index $s$.
For every $\eta > 0$ and $(x,y) \in \X \times \Y$, define the set
\begin{equation*}
\tcetarow = \left\{ r \in \tcrow: \calK\big(r \| P\otimes Q((x,y),\cdot)\big) \leq \eta\right\},
\end{equation*}
and the subset of transition coupling matrices 
\begin{equation*}
\tceta = \{ R \in \tc: R((x,y),\cdot) \in \tcetarow, \, \forall (x,y) \in \X \times \Y\}.
\end{equation*}
Elements of $\tceta$ have rows that are close in KL-divergence to the rows of the independent transition coupling $P \otimes Q$.
When $P$ and $Q$ are aperiodic and irreducible, the same is true of $P \otimes Q$.
Fix $\eta > 0$ and let $\tcetarv$ be the set of transition couplings with transition matrices in $\tceta$.  The \textit{entropic OTC problem} is
\begin{align}\label{eq:entropic_otc}
\begin{split}
\mbox{minimize} \quad &\int c \, d\pi \\
\mbox{subject to} \quad &\pi \in \tcetarv.
\end{split}
\end{align}
For completeness, we establish in Appendix \ref{app:existence} that a solution to \eqref{eq:entropic_otc} exists.
As the divergence $\calK(r \| P \otimes Q(s, \cdot))$ is bounded for $r \in \Pi(P(x,\cdot), Q(y, \cdot))$ 
and $s=(x,y) \in \X\times\Y$ \citep{cuturi2013sinkhorn},
the program \eqref{eq:entropic_otc} coincides with the unconstrained OTC problem for sufficiently large $\eta$.
Finally, note that \eqref{eq:entropic_otc} corresponds to an MDP in the same way that \eqref{eq:otc_problem_rv} does but with a constrained set of policies.
In the rest of the section, we develop computationally efficient alternatives to Algorithms \ref{alg:exact_pe} and \ref{alg:exact_pi} for this constrained MDP.

\subsection{Fast Approximate Transition Coupling Evaluation}
Next, we propose a fast approximation of Algorithm \ref{alg:exact_pe}.
Recall from our previous discussion that the gain vector $g$ corresponding to any aperiodic and irreducible $R \in \tc$ is constant and thus may be written as $g = g_0 \mathbbm{1}$ for a scalar $g_0$.
Fixing such an $R \in \tc$ and $L, T \geq 1$, we approximate the gain $g$ by averaging the cost over $L$ steps of the Markov chain corresponding to $R$ from each possible starting point in $\X\times \Y$.
Moreover, we approximate the bias $h$ by summing the total extra cost over $T$ steps with respect to the approximate gain $\tilde{g}$. 
Formally, let $\tilde{g} := (d^{-2} (R^L c)^\top \mathbbm{1})\mathbbm{1}$ and $\tilde{h} := \sum_{t=0}^T R^t(c - \tilde{g})$.
The resulting algorithm, which we refer to as \texttt{ApproxTCE}, is detailed in Algorithm \ref{alg:fast_pe}.

\begingroup
\setcounter{tmp}{\value{algocf}}
\setcounter{algocf}{0} 
\renewcommand{\thealgocf}{2\alph{algocf}}
\begin{center}
\begin{minipage}[c]{0.32\textwidth}
\begin{algorithm}[H]
\SetKwInOut{Input}{input}
\DontPrintSemicolon
\SetAlgoLined 
\Input{$R$, $L$, $T$}
$\tilde{g} \leftarrow (d^{-2} (R^L c)^\top \mathbbm{1})\mathbbm{1}$\;
$\tilde{h} \leftarrow \sum_{t=0}^T R^t(c - \tilde{g})$\;
\Return{$(\tilde{g}, \tilde{h})$}
\caption{\texttt{ApproxTCE}}\label{alg:fast_pe} 
\end{algorithm}
\end{minipage} 
\end{center}
\endgroup

The approximations $\tilde{g}$ and $\tilde{h}$ can be computed in $\mathcal{O}(L d^4)$ and $\calO(T d^4)$ time, respectively. 
Since $g$ and $h$ are equal to the limits of $\tilde{g}$ and $\tilde{h}$ as $L, T \rightarrow \infty$, we expect that larger $L$ and $T$ will yield better approximations.
One must ensure that the $L$ and $T$ that are required for a good approximation do not grow too quickly with $d$.
We show that this is the case in Theorem \ref{thm:policy_evaluation_complexity} below.
We will say that a transition matrix $R \in [0,1]^{d^2 \times d^2}$ with stationary distribution $\lambda \in \Delta_{d^2}$ is mixing with coefficients $M \in \mathbb{R}_+$ and $\alpha \in [0,1)$ if for every $t \in \mathbb{N}$, $\max_{s \in \mathcal{X}\times\mathcal{Y}} \|R^t(s, \cdot) - \lambda\|_1 \leq M \alpha^t$.
Recall that $R$ is mixing whenever it is aperiodic and irreducible.
\begin{restatable}[]{thm}{policyevaluationcomplexity}
\label{thm:policy_evaluation_complexity}
Let $R \in \tc$ be aperiodic and irreducible with mixing coefficients $M \in \bbR_+$ and $\alpha \in [0,1)$ and gain and bias vectors $g \in \bbR^{d^2}$ and $h \in \bbR^{d^2}$, respectively.
Then for any $\varepsilon > 0$, there exist $L, T \in \mathbb{N}$ such that $\emph{\texttt{ApproxTCE}}(R, L, T)$ yields $(\tilde{g}, \tilde{h})$ satisfying $\|\tilde{g} - g\|_\infty \leq \varepsilon$ and $\|\tilde{h} - h\|_1 \leq \varepsilon$ in $\tilde{\calO}\left(\frac{d^4}{\log \alpha^{-1}} \log\left(\frac{M}{\varepsilon (1 - \alpha)}\right)\right)$ time.
\end{restatable}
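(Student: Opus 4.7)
} The plan is to exploit the fact that, for an aperiodic irreducible $R \in \tc$ with unique stationary distribution $\lambda \in \Delta_{d^2}$, the true gain is a constant vector $g = g_0 \mathbbm{1}$ with $g_0 = \lambda^\top c$, so that both error bounds reduce to controlling how quickly $R^t c$ converges to $g_0 \mathbbm{1}$. The mixing hypothesis $\|R^t(s,\cdot) - \lambda\|_1 \leq M\alpha^t$ converts this convergence into a geometric bound, and then choosing $L$ and $T$ just large enough to beat $\varepsilon$ will give the claimed complexity.

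First, I would analyze the gain error. Writing $(R^L c)(s) - g_0 = \sum_{s'}(R^L(s,s') - \lambda(s'))\,c(s')$ and applying Hölder, I get $|(R^L c)(s) - g_0| \leq M\alpha^L \|c\|_\infty$ uniformly in $s$. Averaging over $s$ gives $|\tilde g_0 - g_0| \leq M\alpha^L \|c\|_\infty$, hence $\|\tilde g - g\|_\infty \leq M\alpha^L \|c\|_\infty$. Since $\X$ and $\Y$ are finite, $\|c\|_\infty$ is a constant that gets absorbed in $\tilde\calO$, so taking $L = \lceil \log(M\|c\|_\infty/\varepsilon)/\log(\alpha^{-1})\rceil$ suffices.

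Next, I would decompose the bias error as
\begin{equation*}
\tilde h - h \;=\; \sum_{t=0}^{T} R^t(g - \tilde g) \;-\; \sum_{t=T+1}^\infty R^t(c - g).
\end{equation*}
Because $g - \tilde g = (g_0 - \tilde g_0)\mathbbm{1}$ and $R^t\mathbbm{1} = \mathbbm{1}$, the first sum equals $(T+1)(g_0 - \tilde g_0)\mathbbm{1}$, whose $\ell_1$ norm is $d^2(T+1)|g_0 - \tilde g_0|$. For the tail, note $R^t(c - g) = R^t c - g_0 \mathbbm{1}$, so by the same mixing argument $\|R^t c - g_0 \mathbbm{1}\|_1 \leq d^2 M \alpha^t \|c\|_\infty$, and summing the geometric tail bounds the second piece by $d^2 M \|c\|_\infty \alpha^{T+1}/(1-\alpha)$. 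Choosing $T = \lceil \log(4 d^2 M \|c\|_\infty/(\varepsilon(1-\alpha)))/\log(\alpha^{-1})\rceil$ makes the tail at most $\varepsilon/2$, and then re-selecting $L = \lceil \log(4 d^2 (T+1) M \|c\|_\infty/\varepsilon)/\log(\alpha^{-1})\rceil$ makes the first contribution at most $\varepsilon/2$, so both $\|\tilde g - g\|_\infty \leq \varepsilon$ and $\|\tilde h - h\|_1 \leq \varepsilon$ hold.

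For complexity, $\tilde g$ requires iteratively computing $R c, R^2 c, \ldots, R^L c$, each multiplication being $\calO(d^4)$ since $R \in [0,1]^{d^2 \times d^2}$ acts on a $d^2$-vector, for a total of $\calO(L d^4)$. Computing $\tilde h$ similarly requires $T$ matrix-vector multiplications after subtracting $\tilde g$, giving $\calO(T d^4)$. Substituting the choices of $L$ and $T$ above, the $\log d$ and $\log\log$ factors are absorbed into $\tilde\calO(\cdot)$, and $\|c\|_\infty$ is a bounded constant, yielding the stated bound $\tilde\calO\bigl(\tfrac{d^4}{\log \alpha^{-1}}\log\bigl(\tfrac{M}{\varepsilon(1-\alpha)}\bigr)\bigr)$. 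I expect the only delicate step to be the interplay between $L$ and $T$: the permissible slack in $|g_0 - \tilde g_0|$ shrinks as $T$ grows, so one must pick $T$ first and then tune $L$ to the chosen $T$, while verifying that the resulting $\log\log$ dependence is indeed swallowed by $\tilde\calO$.
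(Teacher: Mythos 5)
Your proposal is correct and follows essentially the same route as the paper's proof: the same mixing-based bound $\|\tilde g - g\|_\infty \le M\alpha^L\|c\|_\infty$, the same splitting of the bias error into a $(T+1)d^2\|\tilde g - g\|_\infty$ term plus a geometric tail $d^2 M\|c\|_\infty \alpha^{T+1}/(1-\alpha)$, the same "choose $T$ first, then tune $L$ to $T$" ordering, and the same $\calO(Ld^4)+\calO(Td^4)$ complexity accounting. The only cosmetic difference is that you evaluate the first sum exactly via $R^t\mathbbm{1}=\mathbbm{1}$ where the paper bounds it with H\"older; both yield the identical bound.
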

\noindent In particular, \texttt{ApproxTCE} does approximate \texttt{ExactTCE} in time scaling like $\tilde{\calO}(d^4)$.
Explicit choices of $L$ and $T$ are given in the proof of Theorem \ref{thm:policy_evaluation_complexity}, which may be found in Section \ref{app:complexity}.

\begin{rem}
\label{rem:adaptive_LandT}
In practice, values of $L$ and $T$ satisfying the conclusion of Theorem \ref{thm:policy_evaluation_complexity} are unknown.
In our experiments, we found that running Algorithm \ref{alg:fast_pe} with large, fixed values of $L$ and $T$ yields a high approximation accuracy while still running significantly more quickly than Algorithm \ref{alg:exact_pe}.
Alternatively, $L$ and $T$ may be chosen adaptively by computing vectors $\tilde{g}$ and $\tilde{h}$ iteratively for larger and larger values of $L$ and $T$ until some convergence criterion is satisfied or $L$ and $T$ hit some prespecified thresholds.
For example, letting $\tilde{g}^L$ and $\tilde{h}^T$ be the iterates of this procedure, one may iterate until $\|\tilde{g}^L - \tilde{g}^{L-1}\|_\infty < \varepsilon$ and $\|\tilde{h}^T - \tilde{h}^{T-1}\|_\infty < \varepsilon$.
This approach achieves the same worst-case complexity as Algorithm \ref{alg:fast_pe} but allows for time-savings when the chain $R$ mixes quickly.
\end{rem}

For a set $\calU \subset \bbR^n$, let $B_\varepsilon(u) \subset \bbR^n$ be the open ball of radius $\varepsilon > 0$ centered at $u \in \calU$, and let $\aff(\calU)$ denote the affine hull, defined as $\aff(\calU) = \{\sum_{i=1}^k \alpha_i u_i : k \in \mathbb{N}, u_1, ..., u_k \in \calU, \sum_{i=1}^k \alpha_i = 1\}$.
Let $\ri(\cdot)$ denote the relative interior, defined as $\ri(\calU) =\{ u \in \calU: \exists \varepsilon > 0 \mbox{  s.t.  } B_\varepsilon(u) \cap \aff(\calU) \subset \calU\}$.


\begin{restatable}[]{prop}{interiortc}
\label{prop:interior_tc}
If $P$ and $Q$ are aperiodic and irreducible
then every $R \in \ri(\tc)$ is also aperiodic and irreducible, and thus mixing.
\end{restatable}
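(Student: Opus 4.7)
The plan is to exploit the independent transition coupling $P \otimes Q$, which lies in $\tc$ and, under the assumptions on $P$ and $Q$, is itself aperiodic and irreducible. The key is to leverage the relative interior characterization to pass positivity from $P \otimes Q$ to $R$.

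First I would verify that $P \otimes Q$ is aperiodic and irreducible when $P$ and $Q$ are. Since $(P \otimes Q)^n((x,y),(x',y')) = P^n(x,x')\, Q^n(y,y')$, and since aperiodicity plus irreducibility of $P$ and $Q$ guarantee $P^n(x,x') > 0$ and $Q^n(y,y') > 0$ for all sufficiently large $n$ (uniformly in the state pairs since $\X$ and $\Y$ are finite), it follows that $(P\otimes Q)^n((x,y),(x',y')) > 0$ for all $n$ beyond some threshold. This gives both properties for $P \otimes Q$.

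Next I would apply a standard fact from convex analysis: if $R \in \ri(\tc)$ and $S \in \tc$, then there exists $\lambda \in (0,1)$ and $S' \in \tc$ with $R = \lambda S + (1-\lambda) S'$. Taking $S = P \otimes Q$ and using $S' \geq 0$ entrywise (as $S'$ is stochastic), I would obtain the entrywise lower bound $R \geq \lambda\, P \otimes Q$. This is the crux of the argument and the step to verify most carefully, as one must check that $\tc$ admits this characterization — which it does because $\tc$ is a convex polyhedron carved out by the linear marginal constraints given in Definition \ref{def:tc_mats}, and the characterization of $\ri$ applies to any convex set.

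From the entrywise bound I would conclude $R^n(s,s') \geq \lambda^n (P \otimes Q)^n(s,s')$ for all $n \geq 1$ and all state pairs $s, s' \in \X \times \Y$. Since the right-hand side is strictly positive for all $s,s'$ and all sufficiently large $n$, the chain $R$ is irreducible; moreover, for each $s$ the set $\{n : R^n(s,s) > 0\}$ contains all sufficiently large integers, so its gcd is $1$ and $R$ is aperiodic. The mixing conclusion is then immediate from the standard fact that any finite-state, aperiodic, irreducible Markov chain converges to its unique stationary distribution geometrically in total variation. The only real obstacle is the application of the relative interior characterization; the remaining arguments are routine consequences of the entrywise domination.
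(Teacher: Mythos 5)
Your proof is correct and follows essentially the same route as the paper's: both reduce the claim to an entrywise domination $R \geq a \, P\otimes Q$ for some $a > 0$ and then conclude from the strict positivity of $(P\otimes Q)^k$ (Lemma \ref{lemma:irreducibility_of_ind_coup}) that $R^k \geq a^k (P\otimes Q)^k > 0$. The only difference is in how the constant is produced — you invoke the prolongation characterization of the relative interior to write $R = \lambda (P\otimes Q) + (1-\lambda)S'$ with $S' \in \tc$, while the paper perturbs $R$ along the direction $R - P\otimes Q$ inside $B_\varepsilon(R)\cap\aff(\tc)$ to first match supports and then extracts $a$ by finiteness; both are valid.
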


As a consequence of Proposition \ref{prop:interior_tc}, we need only verify that $R \in \ri(\tc)$ to ensure that Theorem \ref{thm:policy_evaluation_complexity} holds and that we may perform fast transition coupling evaluation via \texttt{ApproxTCE}.
As we show in Theorem \ref{thm:policy_improvement_complexity}, this condition is naturally guaranteed when employing entropic OT techniques for speeding up the transition coupling improvement step.

\subsection{Entropic Transition Coupling Improvement}\label{sec:entropic_regularization}
Next we describe a means of speeding up Algorithm \ref{alg:exact_pi}.
For the MDP corresponding to the entropic OTC problem, exact policy improvement can be performed by calling \texttt{ExactTCI} with $\Pi = \tceta$.
However, no computation time is saved by doing this.
Instead, we settle for an algorithm that yields approximately improved transition couplings with better computational efficiency.
To find such an approximation, we reconsider the linear optimization problems that comprise the transition coupling improvement step.
Namely, for each $(x,y) \in \X \times \Y$,
\begin{align}\label{eq:ent_policy_improvement}
\begin{split}
\mbox{minimize} \quad & \langle r, h \rangle \\
\mbox{subject to} \quad &r \in \tcetarow.
\end{split}
\end{align}
By standard arguments, \eqref{eq:ent_policy_improvement} is equivalent to
\begin{align}\label{eq:penalized_ot}
\begin{split}
\mbox{minimize} \quad &\langle r, h \rangle + \frac{1}{\xi} \sum_{s'} r(s') \log r(s') \\
\mbox{subject to} \quad & r\in \tcrow,
\end{split}
\end{align}
for some $\xi \in [0, \infty]$ depending on $(x,y)$, $\eta$ and $h$.
The reformulation \eqref{eq:penalized_ot}
suggests that one use computational techniques for entropic OT in the place of linear programming to perform transition coupling improvement for the constrained OTC problem.
In particular, we use the \texttt{ApproxOT} algorithm of \citep{altschuler2017near}, detailed in Appendix \ref{app:complexity}.
Using \texttt{ApproxOT} instead of solving \eqref{eq:penalized_ot} exactly, we obtain the \texttt{EntropicTCI} algorithm detailed in Algorithm \ref{alg:approx_entropic_pi}.

\begingroup
\setcounter{tmp}{\value{algocf}}
\setcounter{algocf}{1} 
\renewcommand{\thealgocf}{2\alph{algocf}}
\begin{center}
\begin{minipage}[c]{0.63\textwidth}
\begin{algorithm}[H]
\SetKwInOut{Input}{input}
\DontPrintSemicolon
\SetAlgoLined 
\Input{$h, \xi, \varepsilon$}

   \For{$(x, y) \in \X \times\Y$}{
 	 $R(s, \cdot) \leftarrow \texttt{ApproxOT}(P(x,\cdot)^\top, Q(y,\cdot)^\top, h, \xi, \varepsilon)$\;
 }
 	\Return{$R$}\;
 \caption{\texttt{EntropicTCI}}\label{alg:approx_entropic_pi} 
\end{algorithm}
\end{minipage} 
\end{center}
\endgroup

To provide further intuition for Algorithm \ref{alg:approx_entropic_pi}, it is helpful to consider the constrained OTC problem from an alternate perspective.
For a probability measure $r \in \Delta_{d^2}$, let $H(r) = -\sum_s r(s) \log r(s)$ be its entropy.
Then by duality theory, the constrained OTC problem \eqref{eq:entropic_otc} may be written as the finite-dimensional optimization problem
\begin{align}\label{eq:entropic_otc_finite}
\begin{split}
\mbox{minimize} \quad & \langle c, \lambda \rangle - \sum\limits_s \frac{1}{\xi(s)} H(R(s,\cdot))\\
\mbox{subject to} \quad & R \in \Pi_{\mbox{\tiny TC}}(P, Q) \\
& \lambda R = \lambda \\
& \langle \mathbbm{1}, \lambda \rangle = 1,
\end{split}
\end{align}
for some $\xi \in [0, \infty]^{d^2}$.
In order to solve the problem above, we study its Lagrangian.
Let $\alpha, \beta \in \mathbb{R}^{d^3}$, $\gamma \in \mathbb{R}^{d^2}$, and $\delta \in \mathbb{R}$ be Lagrange multipliers.
The Lagrangian may be written as
\begin{align*}
\mathcal{L}(R, \lambda, \alpha, \beta, \gamma, \delta) &= \langle c, \lambda \rangle - \sum\limits_{x,y} \frac{1}{\xi(x,y)} H(R((x,y),\cdot)) \\
& \quad + \sum\limits_{x,y,x'} \alpha(x,y,x') \left(\sum\limits_{y'} R((x,y), (x',y')) - P(x,x')\right) \\
& \quad + \sum\limits_{x,y,y'} \beta(x,y,y') \left(\sum\limits_{x'} R((x,y), (x',y')) - Q(y,y')\right) \\
& \quad + \sum\limits_{x',y'} \gamma(x',y') \left(\sum\limits_{x,y} \lambda(x,y) R((x,y),(x',y')) - \lambda(x',y')\right) \\
& \quad + \delta \left(\sum\limits_{x,y} \lambda(x,y) - 1\right).
\end{align*}
Taking the partial derivative of $\mathcal{L}$ with respect to $R((x,y), (x',y'))$ and setting it equal to zero, 
we find that  
\begin{equation*}
R(s, (x',y')) = \exp\left\{ -\xi(s) \alpha(s,x') - \frac{1}{2}\right\} \exp\left\{-\xi(s) \lambda(s) \gamma(x',y')\right\} \exp\left\{-\xi(s) \beta(s,y') - \frac{1}{2}\right\},
\end{equation*}
where we have used $s = (x,y)$ to reduce notation.
When viewed as a $d \times d$ matrix, $R((x,y), \cdot)$ can be written as $U K V$ where $U$ and $V$ are both non-negative diagonal matrices.  Note that when $\xi(x,y) < \infty$, this implies that $R$ is aperiodic and irreducible since $R$ lies in the relative interior of $\tc$ (see Theorem \ref{thm:policy_improvement_complexity}).

A similar matrix form appears in the analysis of \cite{cuturi2013sinkhorn}.
An important difference is the matrix $K \in \mathbb{R}_+^{d \times d}$, which satisfies
\begin{equation*}
K(x',y') = \exp\left\{-\xi(x,y) \lambda(x,y) \gamma(x',y')\right\}.
\end{equation*}
In \cite{cuturi2013sinkhorn}, one finds that $K = e^{-\xi C}$ where $C$ is the cost matrix.
To better understand this difference, it is helpful to look at the partial derivative of the Lagrangian with respect to $\lambda(x,y)$.
Evaluating this partial derivative and setting it equal to zero, we find
\begin{equation*}
\gamma(x,y) = c(x,y) + \sum\limits_{x',y'} R((x,y), (x',y')) \gamma(x',y') + \delta.
\end{equation*}
Absorbing the scalar $\delta$ into $c$ to obtain an augmented cost $\tilde{c} = c + \delta$, we have
\begin{equation*}
\gamma(x,y) = \tilde{c}(x,y) + \sum\limits_{x',y'} R((x,y), (x',y')) \gamma(x',y').
\end{equation*}
Letting $g$ be the gain of the policy $R$ with respect to $\tilde{c}$, we recognize that the equation above is the Bellman recursion for the bias of $R$ with respect to the cost $\tilde{c} + g$.
As $R$ is aperiodic and irreducible, $g$ is a constant vector.
Moreover, as the bias is invariant under constant shifts in cost, $\gamma$ is exactly the bias $h$ that 
appears in \texttt{EntropicTCI}.
Returning to the form of $R((x,y), \cdot)$ established earlier, we find that
\begin{equation*}
R((x,y), \cdot) = U \exp\left\{-\xi(x,y) \lambda(x,y) h\right\} V = U \exp\{-\tilde{\xi}(x,y) h \} V,
\end{equation*}
for non-negative diagonal matrices $U$ and $V$ and the constant $\tilde{\xi}(x,y) := \xi(x,y) \lambda(x,y)$.
In this way, the bias $h$ plays the role of the cost matrix $C$ of \cite{cuturi2013sinkhorn}.

In order to solve the program \eqref{eq:entropic_otc_finite}, one must grapple with the interdependence between the bias $h$ and the policy $R$.
A natural approach for doing so is to consider an alternating optimization algorithm in which one repeatedly solves for the bias $h$ from a given policy $R$, then solves for a new policy $R$ given the bias $h$.
Indeed, this is the procedure one follows in \texttt{ExactOTC}.
In practice, given a policy $R$, one approximately computes the bias $h$ (\texttt{ApproxTCE}) in order to save time.
Given a bias vector $h$, one solves for a new policy $R$ by performing Sinkhorn iterations with the bias $h$ as a cost matrix for each $R((x,y),\cdot)$ (\texttt{EntropicTCI}).

It was shown in \cite{altschuler2017near} that \texttt{ApproxOT} yields an approximation of the OT 
cost in near-linear time with respect to the size of the couplings of interest.
However, in order to control the approximation error of \texttt{EntropicTCI}, we rely on a different analysis showing that one can obtain an approximation of the entropic optimal coupling in near-linear time (see Lemma \ref{lemma:bound_on_sinkhorn_error}) .
To the best of our knowledge, this result does not exist in the literature, so we provide a proof in Section \ref{app:complexity}.
Using this result, we show the complexity bound below.

\begin{restatable}[]{thm}{policyimprovementcomplexity}
\label{thm:policy_improvement_complexity}
Let $P$ and $Q$ be aperiodic and irreducible, $h \in \bbR^{d^2}$, $\xi > 0$, and $\varepsilon > 0$.
Then $\emph{\texttt{EntropicTCI}}(h, \xi, \varepsilon)$ returns $\hat{R} \in \ri(\tc)$ with $\max_s \|\hat{R}(s,\cdot) - R^*(s,\cdot)\|_1 \leq \varepsilon$ for some $R^* \in \argmin_{R' \in \tc} R'h - \nicefrac{1}{\xi} H(R')$ in $\tilde{\calO}(d^4 \varepsilon^{-4})$ time.
\end{restatable}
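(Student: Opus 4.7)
The plan is to analyze \texttt{EntropicTCI} row-by-row and then aggregate over the $d^2$ rows. Observe from Algorithm \ref{alg:approx_entropic_pi} that \texttt{EntropicTCI} decomposes into $d^2$ independent invocations of \texttt{ApproxOT}, one for each $(x,y) \in \X\times\Y$. The $(x,y)$-th invocation seeks to approximate the row-wise entropic OT minimizer
\[
R^*((x,y),\cdot) \ \in \ \argmin_{r \in \Pi(P(x,\cdot), Q(y,\cdot))} \langle r, h\rangle - \tfrac{1}{\xi} H(r),
\]
where $h$ is identified with a $d\times d$ matrix via the indexing convention from Section~\ref{sec:background_on_otc}. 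The equivalence of this per-row problem with \eqref{eq:ent_policy_improvement} for an appropriate $\xi$ is the content of the duality discussion preceding Algorithm~\ref{alg:approx_entropic_pi}, so it suffices to analyze this per-row entropic OT subproblem.

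The first key step is a per-row approximation guarantee for \texttt{ApproxOT} measured in $\ell_1$ rather than in OT cost. The original Altschuler--Weed--Rigollet analysis only bounds the transport objective, which is too weak for our purposes since the iterative OTC framework requires a coupling close to $R^*$ as a matrix. The plan is to invoke the auxiliary Lemma~\ref{lemma:bound_on_sinkhorn_error} (proved in Section~\ref{app:complexity}) which, for marginals $p,q \in \Delta_d$, cost matrix $C \in \mathbb{R}^{d\times d}$, regularization $\xi$, and tolerance $\varepsilon$, asserts that \texttt{ApproxOT}$(p,q,C,\xi,\varepsilon)$ returns $\hat{\pi} \in \Pi(p,q)$ with $\|\hat{\pi} - \pi^*\|_1 \le \varepsilon$ in time $\tilde{\calO}(d^2 \varepsilon^{-4})$, where $\pi^*$ is the minimizer of the entropy-regularized program. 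Applying this lemma to each of the $d^2$ subproblems with cost matrix $h$ and taking a maximum over $s=(x,y)$ yields the stated $\ell_1$ bound $\max_s\|\hat{R}(s,\cdot)-R^*(s,\cdot)\|_1 \le \varepsilon$; multiplying the per-call runtime by the number of calls $d^2$ gives the claimed $\tilde{\calO}(d^4 \varepsilon^{-4})$ complexity.

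The second step is verifying that $\hat{R} \in \ri(\tc)$. Under aperiodicity and irreducibility of $P$ and $Q$, the independent coupling $P\otimes Q$ lies in $\ri(\tc)$, and the Gibbs kernel $K_{x',y'} = e^{-\tilde{\xi} h(x',y')}$ used inside Sinkhorn is strictly positive on the support of $\Pi(P(x,\cdot),Q(y,\cdot))$. Each Sinkhorn iterate then has the factored form $D_1 K D_2$ with strictly positive diagonal matrices, hence strictly positive entries on the required support, and the rounding step of \texttt{ApproxOT} can be shown to preserve this support once the iterate is close enough to the target marginals (at which point the rounding corrections are small and do not zero out any entries). This places $\hat{R}((x,y),\cdot)$ in $\ri(\Pi(P(x,\cdot),Q(y,\cdot)))$ for every $(x,y)$, which is exactly the condition $\hat{R} \in \ri(\tc)$.

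The main obstacle is Lemma~\ref{lemma:bound_on_sinkhorn_error}: converting the known objective-error bound for Sinkhorn into an $\ell_1$ bound on the primal iterate. The natural route is a two-layer argument: first show that $O(\varepsilon^{-2})$ Sinkhorn iterations drive the marginal violation (and equivalently a KL discrepancy between the iterate and the entropic optimum) below $\varepsilon^2$, then apply a Pinsker-type inequality together with strong convexity of the negative entropy to translate this KL gap into an $\ell_1$ gap of order $\varepsilon$. Combined with the $d^2$ per-iteration cost of Sinkhorn on a $d\times d$ kernel and the effect of the rounding step (which is itself $O(d^2)$ and only perturbs the iterate by an amount comparable to the marginal violation), this gives the $\tilde{\calO}(d^2 \varepsilon^{-4})$ per-call bound. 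The log factors absorbed into $\tilde{\calO}$ capture the dependence on $\xi$, $\|h\|_\infty$, and the minimum positive entries of $P$ and $Q$.
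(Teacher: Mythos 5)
Your proposal is correct and follows essentially the same route as the paper: decompose \texttt{EntropicTCI} into $d^2$ per-row entropic OT subproblems, invoke Lemma \ref{lemma:bound_on_sinkhorn_error} for the $\ell_1$-approximation of each row's entropic optimum in $\tilde{\calO}(d^2\varepsilon^{-4})$ time, multiply by $d^2$, and conclude $\hat{R}\in\ri(\tc)$ from the strict positivity of the Sinkhorn/rounding output on the relevant support together with the fact that the relative interior commutes with products of convex sets. The only cosmetic difference is that the paper also notes one may assume $h\ge 0$ without loss of generality by shifting, a detail your argument does not need to make explicit.
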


\noindent To summarize, this result states that \texttt{EntropicTCI} yields an approximately improved transition coupling in $\tilde{\calO}(d^4)$ time rather than $\tilde{\calO}(d^5)$ as previously discussed.
In practice, further speedups are possible by utilizing the fact that the $d^2$ entropic OT problems to be solved are decoupled and thus may be computed in parallel.

\subsection{\texttt{EntropicOTC}}
Finally, using Algorithms \ref{alg:fast_pe} and \ref{alg:approx_entropic_pi}, we define the \texttt{EntropicOTC} algorithm, detailed in Algorithm \ref{alg:fastentropic_pia}.
Essentially, \texttt{EntropicOTC} is defined by replacing \texttt{ExactTCE} and \texttt{ExactTCI} by the efficient alternatives, \texttt{ApproxTCE} and \texttt{EntropicTCI}.
As stated in Theorem \ref{thm:policy_improvement_complexity}, \texttt{EntropicTCI} returns transition couplings in the relative interior of $\tc$, so the iterates of \texttt{EntropicOTC} are not restricted to the finite set of extreme points of $\tc$.
Thus, convergence for Algorithm \ref{alg:fastentropic_pia} must be assessed differently than in Algorithm \ref{alg:pia}.
In our simulations we found that the element-wise inequality $\tilde{g}_{n+1} \geq \tilde{g}_n$ works well as an indicator of convergence.

\setcounter{algocf}{1} 
\begin{center}
\begin{minipage}[c]{0.64\textwidth}
\begin{algorithm}[H]
\SetKwInOut{Input}{input}
\DontPrintSemicolon
\SetAlgoLined
\Input{$L, T, \xi, \varepsilon$}
$n \leftarrow 0$\;
 \While{$n = 0$ or $\tilde{g}_{n+1} < \tilde{g}_n$}{
  \tcc{transition coupling evaluation}
 $(\tilde{g}_n, \tilde{h}_n) \leftarrow \texttt{ApproxTCE}(R_n, L, T)$\;
 \noindent\noindent\tcc{transition coupling improvement}
 $R_{n+1} \leftarrow \texttt{EntropicTCI}(\tilde{h}_n, \xi, \varepsilon)$\;
 $n \leftarrow n+1$\;
 }
\Return{$R_{n+1}$}\;
 \caption{\texttt{EntropicOTC}}\label{alg:fastentropic_pia}
\end{algorithm}
\end{minipage}
\end{center}

\section{Consistency}
\label{sec:consistency}

The computational and theoretical results presented above assume that one has complete knowledge of the 
transition matrices $P$ and $Q$ of the Markov chains $X$ and $Y$ under study.
In practice, one may not have direct access to $P$ and $Q$, but may instead have estimates 
$\hat{P}_n$ and $\hat{Q}_n$ derived from $n$ observations of the chains $X$ and $Y$.
In the simplest case, $\hat{P}_n$ and $\hat{Q}_n$ may be obtained from the observed relative
frequencies of each transition between states.
In Theorem \ref{thm:consistency} below, we show that the cost and solution sets of the standard and regularized
optimal transition coupling problems possess natural stability properties with respect to the marginal transition matrices. 
As a corollary, we obtain a consistency result for the OTC problem applied to the estimates $\hat{P}_n$ and $\hat{Q}_n$.

Recall that we use $\Delta_d$ to denote the probability simplex in $\bbR^d$ and note that the set of $d \times d$-dimensional transition matrices may be written as $\Delta_d^d$.
Likewise, the set of $d^2 \times d^2$-dimensional transition matrices may be written as $\Delta_{d^2}^{d^2}$.
Note that we endow the sets of $d\times d$- and $d^2 \times d^2$-dimensional transition matrices with the topologies they inherit as subsets of $\mathbb{R}^{d\times d}$ and $\mathbb{R}^{d^2 \times d^2}$, respectively, and adopt the same convention for the set $\Delta_{d^2} \times \Delta_{d^2}^{d^2}$.
Now, we may reformulate Problems \eqref{eq:otc_problem_rv} and \eqref{eq:entropic_otc} as follows:
\vspace{-8mm}
\begin{multicols}{2}
\begin{align}
\begin{split}
\mbox{minimize} \quad & \langle c, \lambda \rangle \\
\mbox{subject to}\quad & R \in \Pi(P, Q) \\
& \lambda R = \lambda \\
& \lambda \in \Delta_{d^2}.
\end{split}\tag{I}
\label{eq:probI}
\end{align}

\begin{align}
\begin{split}
\mbox{minimize} \quad & \langle c, \lambda \rangle \\
\mbox{subject to}\quad & R \in \Pi_\eta(P, Q) \\
& \lambda R = \lambda \\
& \lambda \in \Delta_{d^2}.
\end{split}\tag{II}
\label{eq:probII}
\end{align}
\end{multicols}
\noindent 
Let $\rho(P, Q)$ and $\rho_\eta(P, Q)$ denote the optimal values of Problems \eqref{eq:probI} and \eqref{eq:probII}, 
respectively, and let
$\Phi^*(P, Q)$ and $\Phi^*_\eta(P, Q)$ denote the associated sets of optimal solutions 
$(\lambda, R) \in \Delta_{d^2} \times \Delta_{d^2}^{d^2}$ to Problems \eqref{eq:probI} and \eqref{eq:probII}, respectively.
For metric spaces $\mathcal{U}$ and $\mathcal{Z}$, we will say that a function $F: \calU \rightarrow 2^\calZ$ is upper semicontinuous at a point $u_0 \in \calU$ 
if for any neighborhood $V$ of $F(u_0)$, there exists a neighborhood $U$ of $u_0$ such that $F(u) \subset V$
for every $u \in U$.

\begin{restatable}[]{thm}{consistency}
\label{thm:consistency}
Let $P, Q \in \Delta_d^d$ be irreducible transition matrices.
Then the following hold:
\begin{itemize}
\item $\rho(\cdot, \cdot)$ is continuous and $\Phi^*(\cdot, \cdot)$ is upper semicontinuous at $(P, Q)$
\item For any $\eta > 0$, $\rho_\eta(\cdot, \cdot)$ is continuous and $\Phi^*_\eta(\cdot, \cdot)$ is upper semicontinuous at $(P, Q)$
\end{itemize}
\end{restatable}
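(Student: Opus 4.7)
The plan is to deduce both parts of the theorem from Berge's Maximum Theorem, applied to the parametrized minimizations $\rho(P, Q) = \min\{\langle c, \lambda \rangle : (\lambda, R) \in \mathcal{C}(P, Q)\}$ and $\rho_\eta(P, Q) = \min\{\langle c, \lambda \rangle : (\lambda, R) \in \mathcal{C}_\eta(P, Q)\}$, where $\mathcal{C}(P, Q), \mathcal{C}_\eta(P, Q) \subset \Delta_{d^2} \times \Delta_{d^2}^{d^2}$ denote the feasible sets of \eqref{eq:probI} and \eqref{eq:probII}. The objective $\langle c, \lambda \rangle$ is continuous and the feasible sets are closed subsets of a compact ambient space, so the essential task reduces to showing that $\mathcal{C}$ and $\mathcal{C}_\eta$ are compact-valued and continuous (both upper and lower hemicontinuous) at $(P, Q)$.

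Upper hemicontinuity is immediate: the constraints $R \in \Pi(P, Q)$ and $\lambda R = \lambda$ are defined by continuous (in)equalities in $(P, Q, \lambda, R)$, and the KL-divergence constraint defining $\mathcal{C}_\eta$ is lower semicontinuous in the same variables, so limits of feasible pairs remain feasible. Upper hemicontinuity of $\mathcal{C}$ alone yields lower semicontinuity of $\rho$; combined with continuity of $\rho$ (established below), the stated upper semicontinuity of $\Phi^*$ follows by a standard argument, since any accumulation point of $(\lambda^k, R^k) \in \Phi^*(P^k, Q^k)$ for $(P^k, Q^k) \to (P, Q)$ is feasible for $(P, Q)$ and attains the limiting value $\rho(P, Q)$.

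The crux is lower hemicontinuity, from which upper semicontinuity of $\rho$ follows. I would establish it via the change of variables $\mu(s, s') := \lambda(s) R(s, s')$ to an occupation measure on $(\X \times \Y)^2$. Under this substitution, \eqref{eq:probI} becomes a linear program over $\mu \in \Delta_{d^4}$ with flow constraints $\sum_{s'} \mu(s, s') = \sum_{s''} \mu(s'', s)$ and marginal constraints $\sum_{y'} \mu((x,y),(x',y')) = P(x, x') \sum_{s''} \mu((x,y), s'')$, together with the analogous condition involving $Q$. These are linear in $\mu$ with coefficients depending linearly on $(P, Q)$, so by Hoffman's error bound for linear programs, the associated polytope $\Omega(P, Q)$ varies Lipschitz continuously in Hausdorff metric with $(P, Q)$. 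Given $(\lambda, R) \in \mathcal{C}(P, Q)$ and $(P^k, Q^k) \to (P, Q)$, set $\mu := \lambda R$ and use Hoffman stability to pick $\mu^k \in \Omega(P^k, Q^k)$ with $\mu^k \to \mu$. Recover $\lambda^k(s) := \sum_{s'} \mu^k(s, s')$ and $R^k(s, \cdot) := \mu^k(s, \cdot) / \lambda^k(s)$ on rows with $\lambda^k(s) > 0$, and set $R^k(s, \cdot)$ equal to any coupling in $\Pi(P^k(x, \cdot), Q^k(y, \cdot))$ on the remaining vanishing-mass rows. Then $(\lambda^k, R^k) \in \mathcal{C}(P^k, Q^k)$ with $\lambda^k \to \lambda$, so $\langle c, \lambda^k \rangle \to \langle c, \lambda \rangle$, yielding upper semicontinuity of $\rho$. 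Irreducibility of $P$ and $Q$ enters through Proposition \ref{prop:transmat_char}, which ensures that $\lambda$ has marginals equal to the unique stationary distributions of $P$ and $Q$ and that the $\mu$-LP genuinely captures $\mathcal{C}(P, Q)$.

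The entropic case follows the same outline. The KL constraint $\calK(R((x,y),\cdot) \| P \otimes Q((x,y),\cdot)) \leq \eta$ rewrites, via $R(s, \cdot) = \mu(s, \cdot)/\lambda(s)$, as a convex perspective-type constraint on $\mu$ that is lower semicontinuous everywhere and continuous on $\{\lambda(s) > 0\}$; this hands us upper hemicontinuity of $\mathcal{C}_\eta$. For lower hemicontinuity, repeat the construction above; when the KL constraint is slack at $\mu$, continuity preserves it at $\mu^k$, and when it is tight, first interpolate $\mu$ with the independent coupling $P \otimes Q$ (which has KL-divergence zero from itself) to obtain a strictly interior approximation arbitrarily close to the original, then apply the previous argument. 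The main obstacle I anticipate is verifying the Hoffman-type stability uniformly in a neighborhood of $(P, Q)$ where both the constraint matrix and right-hand side drift with the parameter, and checking that the row-by-row recovery of $R^k$ from $\mu^k$ is consistent with all coupling and stationarity conditions — particularly in the tight-KL regime of \eqref{eq:probII}, where one must confirm that the interpolation does not spoil the control on the cost difference between the perturbed and original $\mu$.
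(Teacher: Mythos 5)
Your overall frame (a Berge-type maximum theorem for the parametrized programs \eqref{eq:probI} and \eqref{eq:probII}) is the same perturbation-theoretic route the paper takes via Theorem \ref{thm:perturbopt}, and your treatment of closedness/compactness and of the deduction of upper semicontinuity of $\Phi^*$ from continuity of $\rho$ matches the paper's verification of conditions (i)--(iii). The gap is exactly at the step you yourself flag: the claim that the occupation-measure polytope $\Omega(P,Q)$ varies Lipschitz-continuously in Hausdorff metric ``by Hoffman's error bound.'' Hoffman's bound gives Lipschitz dependence of a polyhedron on its \emph{right-hand side} for a \emph{fixed} constraint matrix; here the parameter $(P,Q)$ enters the constraint matrix itself (through the terms $P(x,x')\sum_{s''}\mu((x,y),s'')$ and their $Q$-analogues), and Hoffman constants are not stable under perturbations of the matrix. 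Indeed, a parametric polyhedron with varying matrix need not be lower hemicontinuous at all (consider $\{x\in[0,1]: t\,x=0\}$ at $t=0$), so the existence of $\mu^k\in\Omega(P^k,Q^k)$ with $\mu^k\to\mu$ is not a consequence of any off-the-shelf error bound and is left unproven. This is not a technicality one can expect to wave through: the stationarity (flow) constraints are precisely where discontinuity threatens, because a transition coupling $R\in\tc$ can be reducible even when $P$ and $Q$ are irreducible (Appendix \ref{sec:redicible_tc}), and the set of stationary distributions of a reducible chain collapses discontinuously under perturbation. Note also that irreducibility of $P$ and $Q$ --- the key hypothesis of the theorem --- appears in your argument only peripherally (through Proposition \ref{prop:transmat_char}, to identify the marginals of $\lambda$), whereas it must do real work exactly in the lower-hemicontinuity step you leave open; the same remark applies to the entropic case, where the interpolation with $P\otimes Q$ handles slackness of the $\calK$-constraint but still rests on the missing stability of the coupling-plus-stationarity constraints.

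By contrast, the paper does not attempt full lower hemicontinuity of the feasible-set correspondence. It invokes Theorem \ref{thm:perturbopt} (Bonnans--Shapiro, Proposition 4.4), whose condition (iv) is weaker: it only requires that for every neighborhood $\calV_\calZ$ of the unperturbed argmin set there be a neighborhood of $(P,Q)$ in parameter space on which the feasible set meets $\calV_\calZ$, and this is handled by directly constructing the parameter neighborhood $\calV_\calU=\{(P',Q'): R\in\Pi_{\mbox{\tiny TC}}(P',Q') \mbox{ for some } (\lambda,R)\in\calV_\calZ\}$ rather than through any quantitative Hoffman-type stability of the whole feasible set. So your proposal is a genuinely different (and more demanding) route, but as written it is incomplete at its crux: you would need either a proof that the Hoffman constants of $A(P',Q')$ stay bounded near $(P,Q)$, or a direct construction of nearby feasible pairs that exploits irreducibility, before the upper semicontinuity of $\rho$ (and hence the theorem) follows.
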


Theorem \ref{thm:consistency} states that the optimal values and optimal solution sets of the OTC and entropic OTC problems are stable in the marginal transition matrices $P$ and $Q$.
We may use this result to prove a consistency result for either problem when applied to estimates $\hat{P}_n$ and $\hat{Q}_n$ derived from data.
In stating the following result, we make use of the following definition: 
For a sequence of sets $\{A_n\}_{n \geq 0}$ in a topological space $\mathcal{A}$, let
$\limsup_{n \rightarrow \infty} A_n = \bigcap_{n=0}^\infty \cl\left(\bigcup_{m=n}^\infty A_m\right)$, where $\cl(\cdot)$ denotes the closure with respect to topology of $\mathcal{A}$.
Note that the presence of $\cl(\cdot)$ in our definition of limit superior of a sequence of sets differs from that commonly used in probability but is consistent with the definition appearing, for example, in \cite{rockafellar2009variational}.

\begin{cor}
\label{cor:consistency}
Let $X = \{X_i\}_{i \geq 0}$ and $Y = \{Y_i\}_{i\geq 0}$ be stationary, ergodic processes taking values in $\X$ and $\Y$, and defined on a common Borel probability space.
Suppose further that $X$ and $Y$ have marginal, one-step transition matrices $P$ and $Q$, respectively.
Let $\hat{P}_n$ and $\hat{Q}_n$ be the one-step transition matrices estimated via relative frequencies from the sequences $X_0, ..., X_{n-1}$ and $Y_0, ..., Y_{n-1}$.
Then with probability one, the following hold: 
\begin{itemize}

\item $\rho(\hat{P}_n, \hat{Q}_n) \rightarrow \rho(P, Q)$ 
\, and \, $\limsup\limits_{n\rightarrow\infty} \Phi^*(\hat{P}_n, \hat{Q}_n) \subseteq \Phi^*(P, Q)$

\item For any $\eta > 0$, $\rho_\eta(\hat{P}_n, \hat{Q}_n) \rightarrow \rho_\eta(P, Q)$ 
\, and \, $\limsup\limits_{n\rightarrow\infty} \Phi^*_\eta(\hat{P}_n, \hat{Q}_n) \subseteq \Phi^*_\eta(P, Q)$

\end{itemize}
\end{cor}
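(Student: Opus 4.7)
The plan is to derive the corollary directly from Theorem \ref{thm:consistency} by combining the stability of $\rho, \rho_\eta, \Phi^*, \Phi^*_\eta$ in the marginals with the almost sure convergence of the relative-frequency estimators $\hat{P}_n$ and $\hat{Q}_n$ to $P$ and $Q$, respectively. Everything hinges on continuous-mapping-style reasoning once the empirical transitions are shown to converge.

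First, I would establish that $\hat{P}_n \to P$ and $\hat{Q}_n \to Q$ entry-wise almost surely. Writing
\begin{equation*}
\hat{P}_n(x, x') \;=\; \frac{\sum_{i=0}^{n-2} \mathbbm{1}\{X_i = x,\, X_{i+1} = x'\}}{\sum_{i=0}^{n-2} \mathbbm{1}\{X_i = x\}},
\end{equation*}
the numerator divided by $n$ converges almost surely to $\bbP(X_0 = x, X_1 = x') = p(x) P(x, x')$ by Birkhoff's ergodic theorem applied to the stationary ergodic pair process $\{(X_i, X_{i+1})\}_{i \geq 0}$, while the denominator divided by $n$ converges a.s. to $p(x)$. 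Since $X$ is stationary and ergodic on the finite alphabet $\X$, $p(x) > 0$ for every $x \in \X$ (after, if necessary, restricting $\X$ to the support of $p$, where $P$ is then irreducible as required by Theorem \ref{thm:consistency}), and the ratio yields $\hat{P}_n(x,x') \to P(x,x')$ a.s. The same argument gives $\hat{Q}_n \to Q$. Let $\Omega_0$ be the probability-one event on which both convergences hold.

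Next, on $\Omega_0$, continuity of $\rho(\cdot, \cdot)$ and $\rho_\eta(\cdot, \cdot)$ at $(P,Q)$, furnished by Theorem \ref{thm:consistency}, yields $\rho(\hat{P}_n, \hat{Q}_n) \to \rho(P,Q)$ and $\rho_\eta(\hat{P}_n, \hat{Q}_n) \to \rho_\eta(P,Q)$. For the $\limsup$ inclusion, I would invoke a standard set-valued analysis argument: fix any open neighborhood $V$ of $\Phi^*(P,Q)$ in $\Delta_{d^2} \times \Delta_{d^2}^{d^2}$. Upper semicontinuity provides a neighborhood $U$ of $(P,Q)$ with $\Phi^*(P',Q') \subset V$ for all $(P',Q') \in U$. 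On $\Omega_0$, $(\hat{P}_n, \hat{Q}_n) \in U$ eventually, so $\bigcup_{m \geq n} \Phi^*(\hat{P}_m, \hat{Q}_m) \subset V$ for all sufficiently large $n$, whence $\limsup_n \Phi^*(\hat{P}_n,\hat{Q}_n) \subset \cl(V)$. Choosing $V$ to be the open $\varepsilon$-neighborhood of $\Phi^*(P,Q)$ and letting $\varepsilon \downarrow 0$, together with closedness of $\Phi^*(P,Q)$ (it is the argmin of the continuous linear functional $(\lambda,R) \mapsto \langle c, \lambda\rangle$ on the compact feasible set defined by the coupling and stationarity constraints), gives $\limsup_n \Phi^*(\hat{P}_n, \hat{Q}_n) \subset \Phi^*(P,Q)$. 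The same argument with $\Phi^*_\eta$ handles the entropic case.

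The argument presents no substantive obstacle beyond already-proved results: the main pieces are supplied by Theorem \ref{thm:consistency} (which does the hard work of stability) and the ergodic theorem (which does the standard work of estimator consistency). The only subtlety is the set-valued $\limsup$ computation, which requires that $\Phi^*(P,Q)$ (respectively $\Phi^*_\eta(P,Q)$) be closed in the ambient metrizable topology so that the family of open neighborhoods $V_\varepsilon$ provides a valid approximating system; both requirements are immediate in our finite-dimensional setting.
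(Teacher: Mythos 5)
Your proposal is correct and follows exactly the route the paper intends (it states the corollary as an immediate consequence and omits the details): almost sure entrywise convergence of $\hat{P}_n$ and $\hat{Q}_n$ via Birkhoff's ergodic theorem, then continuity of $\rho$, $\rho_\eta$ and upper semicontinuity of $\Phi^*$, $\Phi^*_\eta$ from Theorem \ref{thm:consistency}, with the standard neighborhood argument converting upper semicontinuity into the set-theoretic $\limsup$ inclusion. Your parenthetical care about the support of $p$ and the closedness of $\Phi^*(P,Q)$ fills in exactly the technicalities the paper glosses over.
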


Corollary \ref{cor:consistency} allows us to apply the computational tools 
described above to real data in a principled manner.
In particular, when the marginal transition matrices $P$ and $Q$ are unknown, 
we may use $\hat{P}_n$ and $\hat{Q}_n$ as proxies in the OTC problem to estimate the 
set of optimal transition couplings and their expected cost when $n$ is large.
Note that we do not require the generating processes themselves to be Markov:
they need only be stationary and ergodic, so that the estimates $\hat{P}_n$ and $\hat{Q}_n$ 
converge to the true one-step transition matrices $P$ and $Q$ as $n$ tends to infinity.


\section{Experiments}
\label{sec:experiments}
In this section, we validate the proposed algorithms empirically by applying them to stationary Markov chains derived from both synthetic and real data.
We begin by comparing the runtime of the proposed algorithms and approximation error of \texttt{EntropicOTC} via a simulation study.
Subsequently, we illustrate the potential use of the OTC problem in practice through an application to computer-generated music.

We remark that an application of the OTC problem to graphs is studied in \cite{oconnor2021graph}.
In particular, a weighted graph may be associated with a stationary Markov chain by means of a simple random walk on its nodes with transition probabilities proportional to its edge weights.
Leveraging this perspective, we propose to perform OT on the graphs of interest by applying the OTC problem to their associated Markov chains.
In the aforementioned work, we demonstrate that this approach performs on par with state-of-the-art graph OT methods in a variety of graph comparison and alignment tasks on real and synthetic data.

\texttt{Matlab} implementations of \texttt{ExactOTC} and \texttt{EntropicOTC} as well as code for reproducing the experimental results to follow are available at \url{https://github.com/oconnor-kevin/OTC}.
For \texttt{ApproxOT} and related OT algorithms, we used the implementation found at \url{https://github.com/JasonAltschuler/OptimalTransportNIPS17}.

\begin{figure}[t]
\centering
\begin{subfigure}{0.48\linewidth}
\begin{center}
\includegraphics[width=\linewidth]{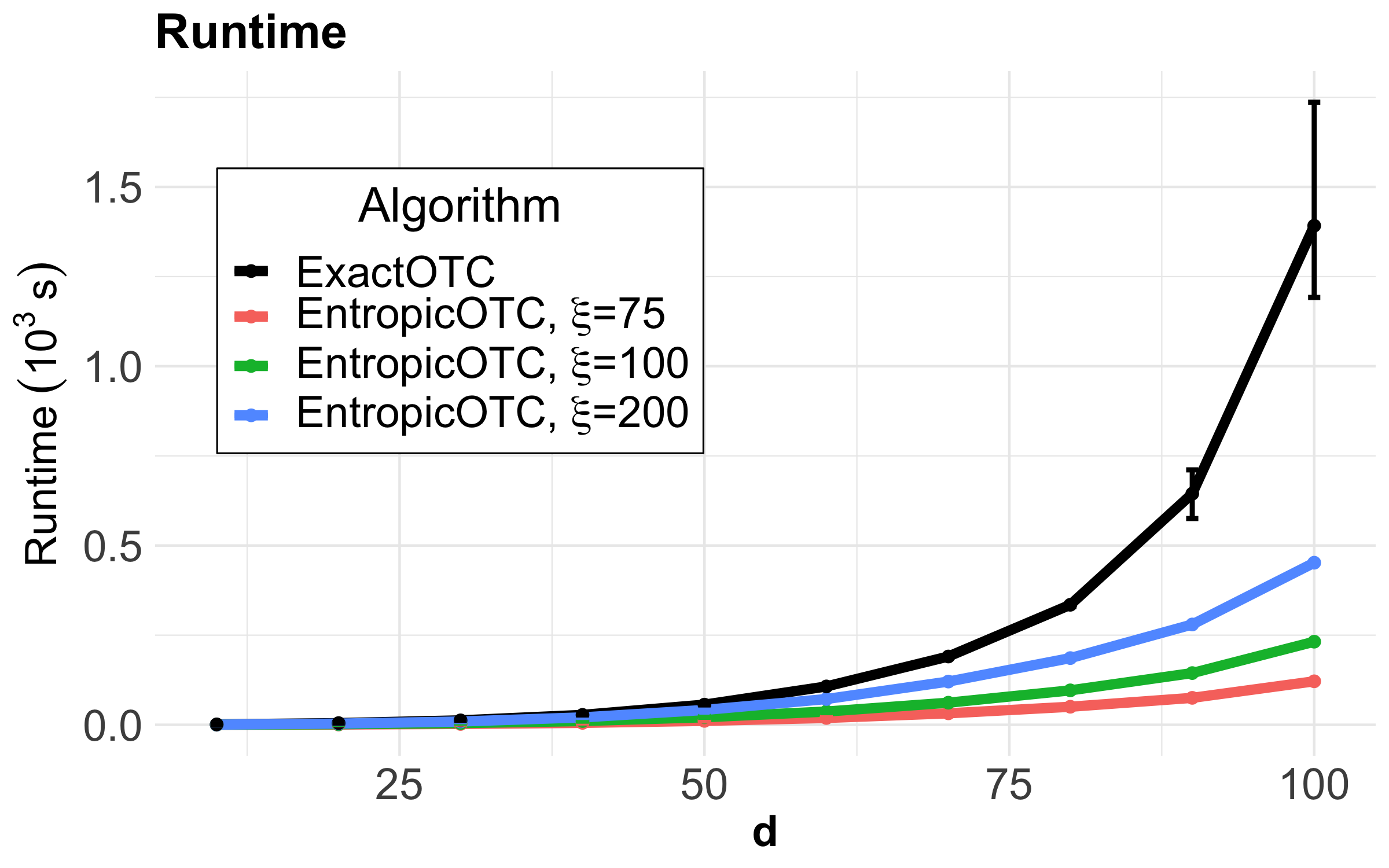}
\end{center}
\end{subfigure}
\begin{subfigure}{0.48\linewidth}
\begin{center}
\includegraphics[width=\linewidth]{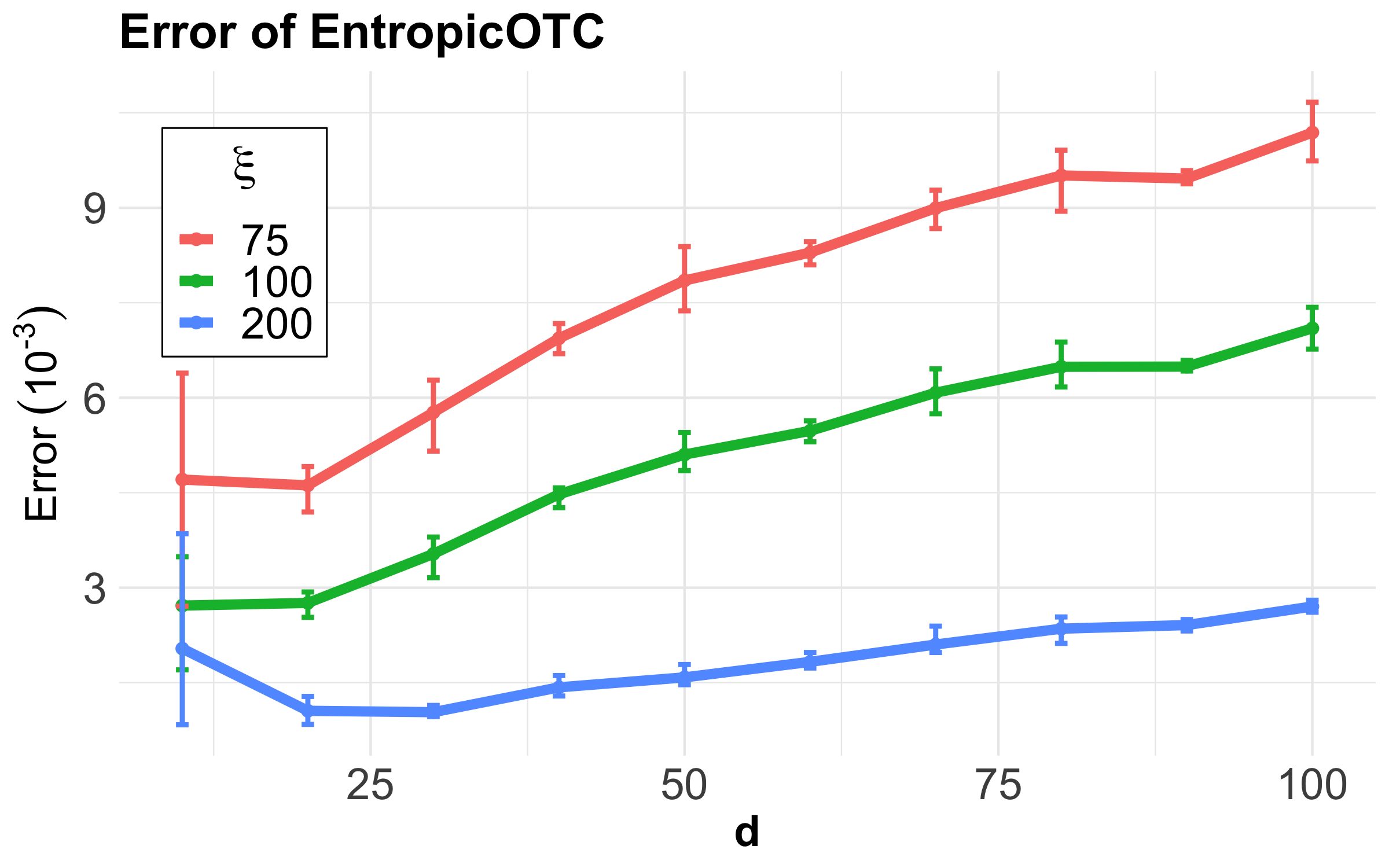}
\end{center}
\end{subfigure}
\caption{A comparison of total runtimes between \texttt{ExactOTC} and \texttt{EntropicOTC} and approximation errors of \texttt{EntropicOTC} for a range of $d$ and $\xi$ via simulation.
Error bars show the minimum and maximum values observed over five simulations.
Note that the error bars for the runtimes of \texttt{EntropicOTC} are not visible because little variation in runtime was observed over the simulations performed.
Runtime is reported in units of $10^3$ seconds while error is reported in units of $10^{-3}$ relative to the maximum value of the cost function $c$.
}
\label{fig:exp_results}
\end{figure}

\subsection{Simulation Study}
In order to validate the use of Algorithm \ref{alg:fastentropic_pia} as a fast alternative to Algorithm \ref{alg:pia}, we performed a simulation study to compare their runtimes and the error of the entropic OTC cost as an approximation of the OTC cost.
For each choice of the marginal state space size $d \in \{10, 20, ..., 100\}$, we perform five simulations, obtaining estimates of the runtimes and approximation error in each.
In each simulation, we generate transition matrices $P \in [0,1]^{d\times d}$ and $Q \in [0,1]^{d\times d}$ and a cost matrix $c \in \mathbb{R}_+^{d\times d}$ by drawing each element of the matrix of interest independently from a standard normal distribution and then applying an appropriate normalization to the matrix.
In the case of the transition matrices, we apply a softmax normalization with weight 0.1 to each row of $P$ and $Q$:
\begin{equation*}
P(x, x') \mapsto \frac{e^{0.1 P(x,x')}}{\sum_{\tilde{x}} e^{0.1 P(x, \tilde{x})}}, \quad\quad Q(y, y') \mapsto \frac{e^{0.1 Q(y, y')}}{\sum_{\tilde{y}} e^{0.1 Q(y, \tilde{y})}}.
\end{equation*}
For the cost matrix, we apply an absolute value element-wise so that $c \in \mathbb{R}_+^{d\times d}$ and then divide each element by the maximum element in the matrix so that $\|c\|_\infty = 1$.
After generating both the transition matrices and cost matrix, we run both \texttt{ExactOTC} and \texttt{EntropicOTC} for each $\xi \in \{75, 100, 200\}$ until convergence.
In all runs of \texttt{EntropicOTC}, we choose $L$ and $T$ adaptively as described in Remark \ref{rem:adaptive_LandT} with tolerance ($\varepsilon$) equal to $10^{-12}$ and upper bounds of $100$ and $1000$, respectively.
For each choice of $\xi \in \{75, 100, 200\}$, we use $50$, $100$, and $200$ Sinkhorn iterations, respectively.
Runtimes of \texttt{ExactOTC} and \texttt{EntropicOTC} in a given iteration are measured from the start to convergence and thus correspond to \textit{total} runtime rather than the runtime of individual iterations.
The approximation error of \texttt{EntropicOTC} in a given iteration is measured by taking the absolute difference between the expected cost returned by \texttt{EntropicOTC} and that returned by \texttt{ExactOTC}.
Note that after randomization, the cost function $c$ is scaled to $\|c\|_\infty = 1$ and the error is reported on that scale.

The results of the simulation study are shown in Figure \ref{fig:exp_results}.
The error bars in either plot denote the maximum and minimum values observed for each choice of parameters over the five repeated simulations.
In our simulations, we found that the time savings in each iteration of \texttt{EntropicOTC} resulted in substantial time savings over the entire runtime of the algorithm without substantial loss of accuracy.
For example, when $d = 100$ and $\xi = 100$, we observed that \texttt{EntropicOTC} yielded a time savings of roughly 80\% compared to \texttt{ExactOTC}.
Moreover, weakening the regularization by increasing $\xi$ reduces the error of \texttt{EntropicOTC} with little additional runtime.
This supports our theoretical findings, indicating that \texttt{EntropicOTC} is a good alternative to \texttt{ExactOTC} when $d$ is large.

\begin{figure}[t]
\begin{subfigure}{0.49\textwidth}
\centering
\includegraphics[scale=0.44]{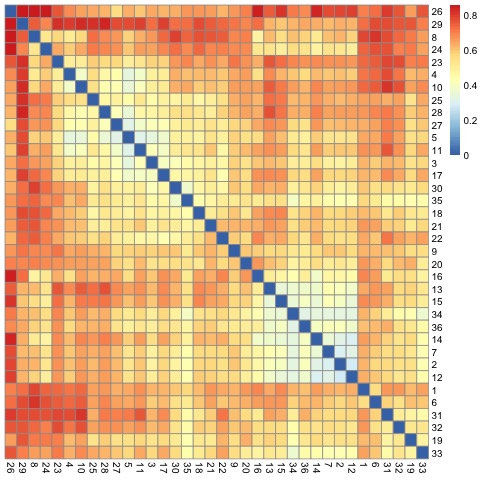}
\caption{\texttt{ExactOTC}}
\end{subfigure}
\hspace{2mm}
\begin{subfigure}{0.49\textwidth}
\centering
\includegraphics[scale=0.44]{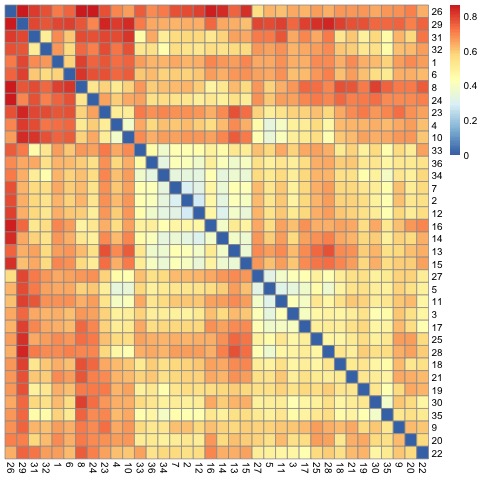}
\caption{\texttt{EntropicOTC}}
\end{subfigure}
\caption{Heatmap of costs for all pairs of pieces as computed by \texttt{ExactOTC} and \texttt{EntropicOTC}.
Lower cost (indicated by blue) indicates a better correspondence between the two pieces.
The list of pieces and composers considered may be found in Table \ref{table:pieces}.}
\label{fig:mds}
\end{figure}

\subsection{Application to Computer-Generated Music}\label{sec:music_exp}
Next we illustrate the OTC problem in practice through an application to aligning and comparing computer-generated music.
HMMs and other state-space models have been explored as a tool for modeling musical arrangements \cite{ames1989markov, liu2002modeling, weiland2005learning, allan2005harmonising, pikrakis2006classification, ren2010dynamic, bell2011algorithmic, yanchenko2017classical, das2018analyzing}.
In this line of work, sequences of notes are commonly modeled as a stationary processes with latent Markovian structure.
As described in Section \ref{sec:background_on_otc}, the OTC problem easily extends to this setting, allowing one to apply OT methods to analyzing generative models for music.
We utilize the computational tools developed above for two tasks: comparing pieces based on the sequences of notes they contain and generating paired sequences of notes based on existing pieces.

We analyzed a dataset of 36 pieces of classical music from 3 different classical composers (Bach, Beethoven and Mozart) downloaded from \href{}{https://www.mfiles.co.uk/classical-midi.htm}.
The pieces considered along with the composer, musical key, and reference number between 1 and 36 may be found in Table \ref{table:pieces}.
For each piece, a 3-layer HMMs with 5 hidden states was trained using the code provided in \cite{yanchenko2017classical}.
We refer the reader to \cite{yanchenko2017classical} and \cite{oliver2004layered} for details on layered HMMs but note that once a layered HMM is trained it may be recast as a standard HMM and thus the extension of OTC to HMMs described in Section \ref{sec:hmm} still applies.
We considered two different cost functions between notes.
The first cost function equal to 0 if the two notes are equal or some number of octaves (intervals of 12 semitones) apart, and 1 otherwise.
The second cost function is 0 when the first cost function is 0, 1 when the two notes are 5 or 7 semitones apart (perfect consonance), 2 when the two notes are 4 or 9 semitones apart (imperfect consonance) and 10 otherwise.
This tiered cost function incorporates a preference for unison over perfect consonance, perfect consonance over imperfect consonance, and imperfect consonance over dissonance.

\begin{figure}[h!]
\centering
\includegraphics[width=\textwidth]{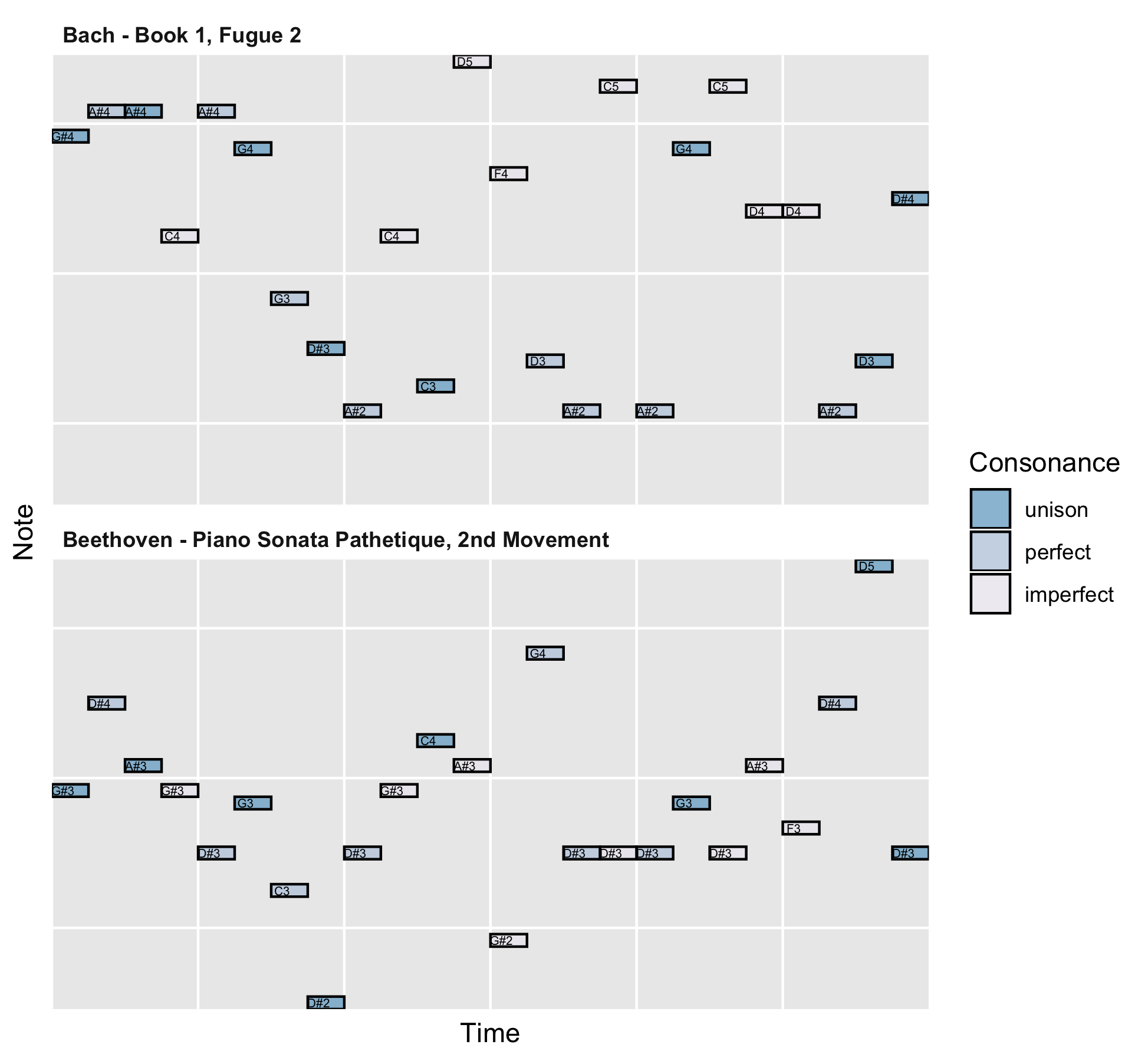}
\caption{An illustration of samples drawn from an optimal transition coupling of Bach's Book 1, Fugue 2 and Beethoven's Sonata Path\'{e}tique, Movement 2, both in C minor.
The color of each sampled note denotes its consonance with the other note played at the same time.}
\label{fig:music_samples}
\end{figure}

In the first task, we computed the OTC cost for every pair of pieces, obtaining a pairwise cost matrix.
Note that when running \texttt{EntropicOTC}, we use $L = 100$, $T = 1000$, $\xi = 50$, and 20 Sinkhorn iterations.
The cost matrices obtained using \texttt{ExactOTC} and \texttt{EntropicOTC} are both depicted in Figure \ref{fig:mds}.
The correspondence between rows and columns of the two heatmaps and the musical pieces considered can be found in Table \ref{table:pieces}.
We remark that pieces in the same key tended to have lower OTC cost.
For example, Bach's Fugue 2 from Book 1 (2 in Figure \ref{fig:mds}) and Fugue 2 from Book 2 (12 in Figure \ref{fig:mds}), both in C minor, had the lowest OTC and entropic OTC costs among all pairs considered.
We observe that pairwise costs obtained by either algorithm only differ by $8 \times 10^{-3}$ on average.
In other words, \texttt{EntropicOTC} approximates the result of \texttt{ExactOTC} with high accuracy.

In the second task, we explored the samples generated from the optimal transition coupling of each pair of fitted HMMs.
The optimal transition coupling maximizes the probability of generating consonant pairs of notes while preserving the distributions of the two sequences.
This results in sequences that sound harmonious together more frequently.
In Figure \ref{fig:music_samples}, we provide a paired sequence drawn from the output of \texttt{ExactOTC} applied to pieces from Bach and Beethoven.
Note that no dissonant pairs of notes were sampled in this sequence.
Audio files for this sequence and sequences drawn from other pairings may be found in the accompanying supplemental materials.

\section{Discussion}\label{sec:conclusion}

In this paper, we introduced an optimal transport problem for stationary Markov chains that takes the Markovian dynamics into account called the optimal transition coupling (OTC) problem.
Intuitively, the OTC problem aims to synchronize the Markov chains of interest so as to minimize long-term average cost.
We demonstrated how this problem may be easily extended to formulate an OT problem for HMMs.
In the interest of computation, we recast this problem as a Markov decision process and leveraged this connection to prove that solutions can be obtained via an adaptation of the policy iteration algorithm, referred to as \texttt{ExactOTC}.
Mirroring the development of entropic OT in \cite{cuturi2013sinkhorn}, we also proposed an entropic OTC problem and an associated approximate algorithm, \texttt{EntropicOTC}, which scales better with dimension.
For cases when the marginal Markov chains must be estimated from data, we showed that the plug-in estimates for either problem are consistent.
We showed empirically that \texttt{EntropicOTC} approximates the OTC cost with high accuracy and substantially faster runtime than \texttt{ExactOTC} in large state space regimes.
Finally, we illustrated the use of the OTC problem and the proposed algorithms in practice via an application to computer-generated music.

Future work may consider extending the ideas of the OTC problem to processes with more flexible structure such as Gibbs processes or dynamical linear models.
We expect that the extension of our work to processes with richer temporal structure will present interesting computational challenges.
Alternatively, future work may explore further applications of the OTC problem in practice.
Our approach to analyzing computer-generated music may be easily transferred to any data that may be modeled by an HMM.
HMMs and other sequence models with hidden Markov structure are commonly used in a variety of fields including genomics, speech recognition, protein folding, and natural language processing.

%
\begin{table}[h!]
\begin{center}
\begin{tabular}{|c|c|c|c|}
\hline 
 & \textbf{Composer} & \textbf{Piece} & \textbf{Key}\\ 
\hline 
1 & Bach & Toccata and Fugue & D minor \\
\hline
2 & Bach & Book 1, Fugue 2 & C minor \\
\hline
3 & Bach & Book 1, Fugue 10 & E minor \\
\hline
4 & Bach & Book 1, Fugue 14 & F\# minor \\
\hline
5 & Bach & Book 1, Fugue 24 & B minor \\
\hline
6 & Bach & Book 1, Prelude 1 & C major \\
\hline
7 & Bach & Book 1, Prelude 2 & C minor \\
\hline
8 & Bach & Book 1, Prelude 3 & C\# major \\
\hline
9 & Bach & Book 1, Prelude 6 & D minor \\
\hline
10 & Bach & Book 1, Prelude 14 & F\# minor \\
\hline
11 & Bach & Book 1, Prelude 24 & B minor \\
\hline
12 & Bach & Book 2, Fugue 2 & C minor \\
\hline
13 & Bach & Book 2, Fugue 7 & D\# major \\
\hline
14 & Bach & Book 2, Prelude 2 & C minor \\
\hline
15 & Bach & Book 2, Prelude 7 & D\# major \\
\hline
16 & Bach & Book 2, Prelude 12 & F minor \\
\hline 
17 & Bach & Bourr\'{e}e in E minor & E minor \\
\hline
18 & Bach & 2 Part Invention, No. 13 & A minor \\
\hline
19 & Bach & 2 Part Invention, No. 4 & D minor \\
\hline
20 & Bach & Prelude in C major & C major \\
\hline
21 & Beethoven & F\"{u}r Elise & A minor \\
\hline
22 & Beethoven & Minuet in G & G major \\
\hline
23 & Beethoven & Moonlight Sonata, Movement 1 & C\# minor \\
\hline
24 & Beethoven & Sonata Path\'{e}tique, Movement 2 & C minor \\
\hline
25 & Beethoven & Symphony No. 7, Movement 2 & A minor \\
\hline
26 & Beethoven & Symphony No. 9, Movement 4 & D minor \\
\hline
27 & Beethoven & Violin Sonata 1, Movement 1 & D major \\
\hline
28 & Mozart & Piano Sonata No. 11, Movement 3 & A major \\
\hline
29 & Mozart & Horn Concerto 4, Movement 3 & D\# major \\
\hline
30 & Mozart & Minuet and Trio, K.1 & G major \\
\hline
31 & Mozart & Minuet in F major, K.2 & F major \\
\hline
32 & Mozart & \"{O}sterreichische Bundeshymne & D\# major \\
\hline
33 & Mozart & Piano Concerto No. 21, Movement 2 & C major \\
\hline
34 & Mozart & Piano Sonata No. 13, Movement 1 & A\# major \\
\hline
35 & Mozart & Piano Sonata No. 16 & C major \\
\hline
36 & Mozart & Symphony No. 40, Movement 1 & G minor \\
\hline
\end{tabular} 
\end{center}
\caption{Pieces considered in the application of OTC to computer-generated music.}
\label{table:pieces}
\end{table}

\section{Proofs}
\label{sec:proofs}
\subsection{Overview of Proofs}
\label{sec:overview_of_proofs}
In what follows, we detail the proofs of our results.
We begin by introducing some additional notation, covering some preliminaries on Markov chains, and remarking on some technical aspects relating to our results.

\subsubsection{Additional notation}
We adopt the following additional notation:
For a finite set $\calU \subset \bbR$, we define $\minplus \calU = \min\{u \in \calU : u > 0\}$.
We define the inner product $\langle \cdot, \cdot \rangle$ for matrices $U, V \in \bbR^{n\times n}$ by $\langle U, V \rangle := \sum_{i,j} U_{ij} V_{ij}$.
All vector and matrix equations and inequalities should be understood to hold element-wise.
For $i \leq j$, we let $u_i^j = (u_i, ..., u_j)$ and we will denote infinite sequences by boldface, lowercase letters such as $\bfu = (u_0, u_1, ...)$.
For a collection of sets $\calU_{s} \subset \bbR^{d^2}$ indexed by $s \in \X \times \Y$, we define $\bigotimes_{s} \calU_s$ to be the set of matrices $U \in \bbR^{d^2\times d^2}$ such that for every $s \in \X \times \Y$, $U(s,\cdot) \in \calU_{s}$.
In particular, we write $\tc = \bigotimes_{(x,y)} \tcrow$.

\subsubsection{Preliminaries on Markov chains}
For a finite metric space $\calU$, we say that a measure $\mu \in \calM(\calU^\bbN)$ is \emph{Markov} or \emph{corresponds to a Markov chain taking values in $\calU$} if for any cylinder set $[u_0 \cdots u_k] \subset \calU^\bbN$, $\mu([u_0 \cdots u_k]) / \mu([u_0 \cdots u_{k-1}]) = \mu([u_{k-1} u_k]) / \mu([u_{k-1}])$, where we let $\nicefrac{0}{0} = 0$.
We say that $\mu$ is \emph{stationary} if $\mu = \mu \circ \sigma^{-1}$, where $\sigma: \calU^\bbN \rightarrow \calU^\bbN$ is the left-shift map defined such that for any $\bfu \in \calU^\bbN$, $\sigma(\bfu)_i = u_{i+1}$.
When $\calU$ has cardinality $n \geq 1$, we define the transition matrix $U \in \bbR^{n\times n}$ of $\mu$ such that for every $u_{k-1}, u_k \in \calU$, $U(u_{k-1}, u_k) = \mu([u_{k-1} u_k]) / \mu([u_{k-1}])$.
If $\mu$ is also stationary, its stationary distribution $\lambda_U \in \Delta_n$ is defined such that $\lambda_U(u) = \mu([u])$ for any $u \in \calU$.
We say that $\mu$ or $U$ is \emph{irreducible} if for every $u, u' \in \calU$, there exists $k \geq 1$, possibly depending on $u$ and $u'$, such that $U^k(u,u') > 0$.
We call $\mu$ or $U$ \emph{aperiodic} if $\gcd\{k \geq 1: U^t(u,u') > 0\} = 1$ for every $u, u' \in \calU$.
Note that if $\mu$ is irreducible, its stationary distribution $\lambda_U$ is unique.
Furthermore, if $\mu$ is also aperiodic, there exists $M < \infty$ and $\alpha \in (0, 1)$ such that for any $t \geq 1$, $\max_u \|U^t(u, \cdot) - \lambda_U\|_1 \leq M\alpha^t$.
For more details on basic Markov chain theory, we refer the reader to \cite{levin2017markov}.

\subsubsection{Technical considerations}
We endow the finite set $\X\times\Y$ with the discrete topology and $\X^\bbN\times\Y^\bbN$ with the corresponding product topology.
For each $(x,y) \in \X\times\Y$ and $\eta > 0$, we endow both $\tcrow$ and $\tcetarow$ with the subspace topology inherited from the Euclidean topology on $\bbR^{d^2}$.
Similarly, we endow $\tc$ and $\tceta$ with the subspace topologies inherited from the Euclidean topology on $\bbR^{d^2\times d^2}$.
Unless stated otherwise, continuity of any function will be understood to mean with respect to the corresponding topology above.

\subsection{Proofs from Section \ref{sec:background_on_otc}}\label{app:finite_dim}
\transmatchar*
\begin{proof}
Let $\pi \in \calM((\X \times \Y)^\bbN)$ be the distribution of a stationary Markov chain with transition matrix $R \in \tc$ and stationary distribution $r \in \Delta_{d^2}$.
Furthermore, let $r_{\X}$ and $r_{\Y} \in \Delta_d$ be the $\X$ and $\Y$ marginals of $r$, respectively.
For a metric space $\calU$ and a probability measure $\mu \in \calM(\calU^\bbN)$, we define $\mu_k \in \calM(\calU^k)$ as the $k$-dimensional marginal distribution of $\mu$.
Formally, for any cylinder set $[a_0^{k-1}] = \{\bfu \in \calU^\bbN: u_j = a_j, 0 \leq j \leq k-1\}$, $\mu_k(a_0^{k-1}) := \mu([a_0^{k-1}])$.

We wish to show that $\pi \in \tcrv$.
Since $\pi$ corresponds to a stationary Markov chain and $R \in \tc$ by assumption, it suffices to show that $\pi \in \Pi(\bbP, \bbQ)$.
We will do this by showing that $\pi_k \in \Pi(\bbP_k, \bbQ_k)$ for every $k \geq 1$.
Starting with $k = 1$, for any $y \in \Y$,
\begin{align*}
r_{\Y}(y) &= \sum\limits_x r(x, y) \\
&= \sum\limits_x \sum\limits_{x', y'} r(x', y') R((x', y'), (x, y)) \\
&= \sum\limits_{x', y'} r(x', y') \sum\limits_x R((x', y'), (x, y)) \\
&= \sum\limits_{x',y'} r(x',y') Q(y', y) \\
&= \sum\limits_{y'} r_{\Y}(y') Q(y', y).
\end{align*}
We have proven that $r_{\Y}$ is invariant with respect to $Q$.
Since $Q$ is irreducible, the stationary distribution $q$ of $Q$ is unique.
Thus, $r_{\Y} = q$.
A similar argument will show that $r_{\X} = p$.
Thus, $r \in \Pi(p, q)$ and therefore, $\pi_1 \in \Pi(\bbP_1, \bbQ_1)$.

Now suppose that $\pi_k \in \Pi(\bbP_k, \bbQ_k)$ for some $k \geq 1$.
Fixing $y_0^k \in \Y^{k+1}$, it follows that
\begin{align*}
\sum\limits_{x_0^k} \pi_{k+1}(x_0^k, y_0^k) &= \sum\limits_{x_0^k} \pi_k(x_0^{k-1}, y_0^{k-1}) R((x_{k-1}, y_{k-1}), (x_k, y_k)) \\
&= \sum\limits_{x_0^{k-1}} \pi_k(x_0^{k-1}, y_0^{k-1}) Q(y_{k-1}, y_k) \\
&= \bbQ_k(y_0^{k-1}) Q(y_{k-1}, y_k) \\
&= \bbQ_{k+1}(y_0^k).
\end{align*}
Again the proof for the other marginal is identical.
So we find that $\pi_{k+1} \in \Pi(\bbP_{k+1}, \bbQ_{k+1})$ and since $k\geq 1$ was arbitrary, we conclude that $\pi \in \pitc(\bbP, \bbQ)$.
\end{proof}

\subsection{Proofs from Section \ref{sec:computing}}
\subsubsection{Existence of a deterministic policy}
\deterministicpolicy*
\begin{proof}
Before proving the result, it will be helpful to fix some additional notation.
Let $\gamma = \{\gamma_s(\cdot): s \in \X\times\Y\}$ be a policy for TC-MDP.
Recall that for each $s = (x,y)$, $\gamma_s(\cdot)$ describes a distribution on $\calA_s = \tcrow$.
Define the deterministic policy $\tilde{\gamma} = \{\tilde{\gamma}_s(\cdot): s \in \X\times\Y\}$ such that for every $s$, $\tilde{\gamma}_s(\cdot)$ assigns probability one to
\begin{equation*}
\tilde{r}_s := \int_{\calA_s} r_s \gamma_s(dr_s).
\end{equation*}
Here, $\tilde{r}_s$ is the expected action taken by the agent while occupying a state $s$ and following the policy $\gamma$.
Note that $\tilde{r}_s \in \calA_s$ due to the convexity of $\calA_s$.
As such, we may collect the row vectors $\{\tilde{r}_s: s \in \X\times\Y\}$ into a single transition matrix $\tilde{R} \in \tc$ where $\tilde{R}(s,\cdot) = \tilde{r}_s(\cdot)$ for every $s \in \X\times\Y$.
In what follows, let $\prob_\gamma(\cdot | s_0)$ and $\prob_{\tilde{\gamma}}(\cdot | s_0) \in \calM(\{\calA\times(\X\times\Y)\}^\bbN)$ be the probability measures corresponding to the action-state processes with initial state $s_0$ induced by $\gamma$ and $\tilde{\gamma}$, respectively.
In particular,
\begin{equation*}
\prob_\gamma(dr_{s_0}, s_1,..., dr_{s_{t-1}}, s_t | s_0) = \gamma_{s_0}(dr_{s_0}) r_{s_0}(s_1) \cdots \gamma_{s_{t-1}}(dr_{s_{t-1}}) r_{s_{t-1}}(s_t)
\end{equation*}
and the analogous statement holds for $\prob_{\tilde{\gamma}}(\cdot | s_0)$.
In the case of $\tilde{\gamma}$, one may also show that $\prob_{\tilde{\gamma}}(s_t | s_0) = \tilde{R}^t(s_0, s_t)$.
Finally, let $\mathbb{E}_\gamma[\cdot | s_0]$ and $\mathbb{E}_{\tilde{\gamma}}[\cdot | s_0]$ denote expectation with respect to $\prob_\gamma(\cdot | s_0)$ and $\prob_{\tilde{\gamma}}(\cdot | s_0)$, respectively.

Now, we can prove the result.
For any $s_0 \in \X\times\Y$ and $t \geq 1$,
\begin{align*}
\mathbb{E}_\gamma\left[c(s_t) | s_0\right] &= \sum\limits_{s_t} c(s_t) \prob_\gamma(s_t | s_0) \\
&= \sum\limits_{s_t} c(s_t) \int_{\calA_{s_0}} \sum\limits_{s_1} \cdots \int_{\calA_{s_{t-1}}} \prob_{\gamma}(dr_{s_0}, s_1, ..., dr_{s_{t-1}}, s_t | s_0) \\
&= \sum\limits_{s_t} c(s_t) \int_{\calA_{s_0}} \sum\limits_{s_1} \cdots \int_{\calA_{s_{t-1}}} \gamma_{s_0}(dr_{s_0}) \, r_{s_0}(s_1) \cdots \gamma_{s_{t-1}}(dr_{s_{t-1}}) \, r_{s_{t-1}}(s_t) \\
&= \sum\limits_{s_1^t} c(s_t) \int_{\calA_{s_0}} \cdots \int_{\calA_{s_{t-1}}} \gamma_{s_0}(dr_s) \, r_{s_0}(s_1) \cdots \gamma_{s_{t-1}}(dr_{s_{t-1}}) \, r_{s_{t-1}}(s_t) \\
&= \sum\limits_{s_1^t} c(s_t) \tilde{r}_{s_0}(s_1) \cdots \tilde{r}_{s_{t-1}}(s_t) \\
&= \sum\limits_{s_1^t} c(s_t) \tilde{R}(s_0,s_1) \cdots \tilde{R}(s_{t-1}, s_t) \\
&= \sum\limits_{s_t} c(s_t) \tilde{R}^t(s_0,s_t) \\
&= \sum\limits_{s_t} c(s_t) \prob_{\tilde{\gamma}}(s_t | s_0) \\
&= \mathbb{E}_{\tilde{\gamma}}\left[c(s_t) | s_0\right].
\end{align*}
Thus, for every $s \in \X\times\Y$,
\begin{equation*}
\overline{c}_\gamma(s) = \lim\limits_{T\rightarrow\infty} \frac{1}{T} \sum\limits_{t=1}^T \mathbb{E}_\gamma\left[c(s_t) | s_0 = s\right] = \lim\limits_{T\rightarrow\infty} \frac{1}{T} \sum\limits_{t=1}^T \mathbb{E}_{\tilde{\gamma}}\left[c(s_t) | s_0=s\right] = \overline{c}_{\tilde{\gamma}}(s).
\end{equation*}
\end{proof}

\subsubsection{Correspondence between TC-MDP and the OTC problem}\label{app:otc_is_tcmdp}
Next, we prove Proposition \ref{prop:otc_is_mdp} showing that optimal solutions to TC-MDP necessarily provide optimal solutions to the OTC problem.
We rely on the basic idea of recurrent classes of states for finite-state Markov chains.
For details on recurrence for Markov chains, we refer the reader to \cite{levin2017markov}.
For any $R \in \tc$, let $\Lambda(R) := \{ \lambda \in \calM(\X): \lambda R = \lambda\}$ denote the set of stationary distributions for $R$ and let $\bigsqcup$ denote a disjoint union.
Before proving the proposition, we require a lemma stating that for a given transition coupling matrix $R \in \tc$, the stationary distribution of $R$ that incurs the least expected cost may be chosen to be the unique stationary distribution of one of $R$'s recurrent classes.

\begin{lem}\label{lemma:extreme_points_of_stat_dist}
Let $R \in \tc$ and let $\calS_r$ be the set of states belonging to some recurrent class of $R$.
Moreover, for every $s \in \calS_r$, let $\lambda_{R,s} \in \Lambda(R)$ denote the stationary distribution of $R$ corresponding to the recurrent class in which $s$ lies.
Then $\lambda_{R, s}$ is uniquely defined and 
\begin{equation*}
\min\limits_{s \in \calS_r} \langle c, \lambda_{R,s} \rangle = \min\limits_{\lambda \in \Lambda(R)} \langle c, \lambda \rangle.
\end{equation*}
\end{lem}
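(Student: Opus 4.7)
The plan is to reduce the lemma to a linear-programming-style observation: the feasible set $\Lambda(R)$ of stationary distributions is a convex polytope whose extreme points are exactly the stationary distributions of the individual recurrent classes of $R$, and the objective $\lambda \mapsto \langle c, \lambda \rangle$ is linear, so its minimum is attained at one of these extreme points.

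First I would invoke the standard decomposition of a finite-state Markov chain: the state space $\X \times \Y$ partitions as $\calS_t \sqcup C_1 \sqcup \cdots \sqcup C_k$, where $\calS_t$ is the set of transient states and $C_1,\ldots,C_k$ are the recurrent (closed communicating) classes, so $\calS_r = \bigsqcup_{i=1}^k C_i$. Standard theory (e.g.\ \cite{levin2017markov}) gives that each $C_i$, viewed as an irreducible finite Markov chain under the restriction of $R$, admits a \emph{unique} stationary distribution $\mu_i \in \Delta_{d^2}$ supported on $C_i$. For any $s \in C_i$, the distribution $\lambda_{R,s}$ referenced in the lemma statement is $\mu_i$, which gives the asserted well-definedness.

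Next I would characterize $\Lambda(R)$. For any $\lambda \in \Lambda(R)$, transient states must receive mass zero under $\lambda$ (since the expected occupation time at a transient state is finite, any invariant measure assigns it mass zero), so $\lambda$ is supported on $\calS_r$. Moreover, no $R$-mass escapes from any $C_i$, so the restriction $\lambda|_{C_i}$, once normalized (if nonzero), is itself stationary for $R|_{C_i}$ and hence equals $\mu_i$ by uniqueness. Writing $\alpha_i := \lambda(C_i)$, we obtain
\[
\lambda \;=\; \sum_{i=1}^k \alpha_i \,\mu_i, \qquad \alpha_i \geq 0, \quad \sum_i \alpha_i = 1,
\]
so $\Lambda(R) = \conv\{\mu_1,\ldots,\mu_k\}$. (The $\mu_i$ have disjoint supports, so they are affinely independent and are in fact the extreme points of this polytope.)

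Finally, since $\lambda \mapsto \langle c, \lambda \rangle$ is linear on the simplex $\Lambda(R)$, its minimum is attained at an extreme point, i.e.\ at some $\mu_{i^*} = \lambda_{R,s^*}$ for any $s^* \in C_{i^*}$. Hence
\[
\min_{\lambda \in \Lambda(R)} \langle c, \lambda \rangle \;=\; \min_{1 \leq i \leq k} \langle c, \mu_i \rangle \;=\; \min_{s \in \calS_r} \langle c, \lambda_{R,s} \rangle,
\]
which is the claimed equality. There is no real obstacle here beyond careful bookkeeping; the only step requiring a little care is the argument that any invariant $\lambda$ vanishes on transient states and restricts to the unique invariant measure on each recurrent class, but this is a routine consequence of irreducibility within $C_i$ together with the fact that $R|_{C_i}$ is a bona fide stochastic matrix on $C_i$.
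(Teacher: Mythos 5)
Your proof is correct and follows essentially the same route as the paper's: identify the unique stationary distribution of each recurrent class, show $\Lambda(R)$ is the convex hull of these finitely many distributions, and conclude by minimizing the linear objective over the extreme points. The only difference is that you spell out the characterization of $\Lambda(R)$ (vanishing on transient states, restriction to each closed recurrent class) that the paper dismisses as ``straightforward to show,'' which is a welcome addition rather than a divergence.
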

\begin{proof}
The uniqueness of $\lambda_{R,s}$ follows from the fact that the chain obtained by restricting $R$ to the recurrent class of $s$ is necessarily irreducible.
Now suppose that $R$ has $m$ recurrent classes $\{S_r^i\}_{i=1}^m$ and thus $\calS_r = \bigsqcup_{i=1}^m S_r^i$.
Then by \cite[Theorem A.5]{puterman2005markov}, there exist $m$ linearly independent stationary distributions of $R$.
Note that necessarily, the unique stationary distributions $\{\lambda_i\}_{i=1}^m$ corresponding to the $m$ recurrent classes of $R$ are linearly independent and constitute such a choice.
Moreover, it is straightforward to show that $\Lambda(R)$ is equal to the convex hull of $\{\lambda_i\}_{i=1}^m$ and is thus compact.
Then since minima of linear functions over a compact, convex set occur at the extreme points of the feasible set, 
\begin{equation*}
\min\limits_{s \in \calS_r} \langle c, \lambda_{R, s} \rangle = \min\limits_{i=1, ..., m} \langle c, \lambda_i \rangle = \min\limits_{\lambda \in \Lambda(R)} \langle c, \lambda \rangle.
\end{equation*}
\end{proof}

\otcismdp*
\begin{proof}
For every $R \in \tc$ and $s \in \calS$, let $\lambda_{R,s} \in \Lambda(R)$ be the stationary distribution of $R$ defined by
\begin{equation*}
\lambda_{R,s} := \lim\limits_{T \rightarrow\infty} \frac{1}{T} \sum\limits_{t=1}^T R^t(s,\cdot).
\end{equation*}
Note that $\lambda_{R,s}$ is well-defined by \cite[Theorem A.5]{puterman2005markov}.
Moreover, we will use $\calS_r(R)$ to refer to the set of all states in $\calS$ that belong to a recurrent class of $R$.
Since the space $\calS$ is finite, $\calS_r(R)$ is necessarily non-empty for every $R \in \tc$.
Finally, note that whenever $s \in \calS_r(R)$, $\lambda_{R,s}$ is the unique stationary distribution of $R$ associated with the recurrent class in which $s$ lies.

Now let $R^* \in \tc$ be optimal for TC-MDP.
We will construct a transition coupling $\pi_* \in \tcrv$ from $R^*$ that is optimal in the OTC problem.
Note that by definition, $\overline{c}_R(s) = \langle c, \lambda_{R,s} \rangle$.
Then by the optimality of $R^*$ in TC-MDP, $\langle c, \lambda_{R^*, s} \rangle = \min_{R \in \tc} \langle c, \lambda_{R,s} \rangle$ for every $s \in \calS$.
So by Lemma \ref{lemma:extreme_points_of_stat_dist},
\begin{equation}\label{eq:tcmdp_solution_is_opt}
\min\limits_{s \in \calS_r} \langle c, \lambda_{R^*, s} \rangle = \min\limits_{s \in \calS} \langle c, \lambda_{R^*, s} \rangle = \min\limits_{R \in \tc} \min\limits_{s \in \calS} \langle c, \lambda_{R, s} \rangle = \min\limits_{R \in \tc} \min\limits_{\lambda \in \Lambda(R)} \langle c, \lambda \rangle.
\end{equation}
Let $s^* \in \argmin_{s \in \calS_r} \langle c, \lambda_{R^*, s} \rangle$ and define $\pi_* \in \tcrv$ to be the transition coupling with transition matrix $R^*$ and stationary distribution $\lambda_{s^*}$.
Then by \eqref{eq:tcmdp_solution_is_opt},
\begin{equation*}
\int c \, d\pi_* = \langle c, \lambda_{R^*, s^*} \rangle = \min\limits_{R \in \tc} \min\limits_{\lambda \in \Lambda(R)} \langle c, \lambda \rangle.
\end{equation*}
But at this point, we recognize that the quantity on the right is exactly the OTC cost.
To see this, note that by Proposition \ref{prop:transmat_char} every $\pi \in \tcrv$ is uniquely characterized by a transition matrix $R \in \tc$ and a stationary distribution $\lambda \in \Lambda(R)$, and $\int c \, d\pi = \langle c, \lambda \rangle$.
Thus
\begin{equation*}
\int c \, d\pi_* = \min\limits_{R \in \tc} \min\limits_{\lambda \in \Lambda(R)} \langle c, \lambda \rangle = \min\limits_{\pi \in \tcrv} \int c \, d\pi,
\end{equation*}
and we conclude that $\pi_*$ is optimal for the OTC problem.
Finally, by construction, $\int c \, d\pi_* = \min_{s \in \calS} \langle c, \lambda_{R^*, s} \rangle = \min_{s \in \calS} \overline{c}_{R^*}(s)$.
\end{proof}

\subsubsection{Convergence of \texttt{ExactOTC}}\label{app:pia_convergence}
Next, we prove the convergence of Algorithm \ref{alg:pia} to a solution of TC-MDP.
For any polyhedron $\calP \in \bbR^{n\times n}$, let $\calE(\calP)$ denote the extreme points of $\calP$.
Recall that if $\calP$ is bounded, a linear function on $\calP$ achieves its minimum on $\calE(\calP)$ \citep{bertsimas1997introduction}.
Note that for every $(x,y) \in \X\times\Y$, since $\tcrow$ is a bounded subset of $\bbR^{d^2}$ defined by a finite set of linear equality and inequality constraints, it is a bounded polyhedron.

\convergenceofpia*
\begin{proof}
We will first show that Algorithm \ref{alg:pia} converges to some $(g^*, h^*, R^*)$ and then argue that this is a solution to TC-MDP.
Recall that for every $s=(x,y)$, $\calA_{s} = \tcrow$ and $\calA = \bigcup_s \calA_s$.
In this proof, it is most convenient to consider the concatenatation of the state-action spaces instead of the union $\bigcup_s \calA_s$.
Abusing notation, we let $\calA = \bigotimes_s \calA_s$ for the remainder of the proof.
Furthermore, let $\calA'_s = \calE(\calA_s)$ be the set of extreme points of $\calA_s$.
As $\calA_s$ is a bounded polyhedron, $\calA'_s$ is finite.
For every $n \geq 1$, let $(g_n, h_n, R_n)$ be the $n$'th iterate of Algorithm \ref{alg:pia}.
Since the rows of $R_n$ are solutions of the linear programs in Algorithm \ref{alg:exact_pi},$R_n(s,\cdot) \in \calE(\calA'_s)$ for every $s$.
Thus the iterates of Algorithm \ref{alg:pia} are the same as the iterates of the policy iteration algorithm for the restricted MDP $(\X\times\Y, \bigcup_s \calA'_s, \{p(\cdot | s, a)\}, c)$ constructed by restricting the state-action spaces $\calA_s$ of TC-MDP to $\calA'_s$ for each $s$.
Since $\calA'_s$ is finite for every $s$, standard results \citep[Theorem 9.2.3]{puterman2005markov} ensure that the iterates $\{(g_n, h_n, R_n)\}$ of Algorithm \ref{alg:pia} will converge to a solution $(g^*, h^*, R^*)$ in a finite number of iterations.
Thus, we need only show that any stationary point of Algorithm \ref{alg:pia} is necessarily a solution to TC-MDP.

Let $(g^*, h^*, R^*)$ be a stationary point of Algorithm \ref{alg:pia}.
Then $R^* = \texttt{ExactTCI}(g^*, h^*, R^*, $ $\bigotimes_s \calA'_s)$ and consequently, $R^*(s,\cdot) \in \argmin_{r \in \calA'_s} rh^*$ for every $s$.
Since $\calA_s$ is a bounded polyhedron, $\min_{r \in \calA_s} rh^*  = \min_{r \in \calA'_s} rh^*$ and we find that $R^*(s,\cdot) \in \argmin_{r \in \calA_s} rh^*$.
Since $\calA = \bigotimes_s \calA_s$, we may write $R^* \in \argmin_{R \in \calA} Rh^*$ where the minimum is understood to be element-wise.
Using the assumption that $(g^*, h^*, R^*)$ is a stationary point of Algorithm \ref{alg:pia} again, $(g^*, h^*) = \texttt{ExactTCE}(R^*)$.
It follows that 
\begin{equation}\label{eq:rstar_eq}
g^* + h^* = R^* h^* + c.
\end{equation}
Since $R^* \in \argmin_{R \in \calA} R h^*$, we obtain
\begin{equation*}
g^* + h^* = \min\limits_{R \in \calA} Rh^* + c.
\end{equation*}
Then by \cite[Theorem 9.1.2 (c)]{puterman2005markov}, $g^*$ is the optimal expected cost for TC-MDP.
Moreover, by \eqref{eq:rstar_eq} and \cite[Theorem 8.2.6 (b)]{puterman2005markov}, $g^* = \overline{R}^*\!\! c = \overline{c}_{R^*}$, where we remind the reader that $\overline{R}^* = \lim_{T\rightarrow\infty} \nicefrac{1}{T} \sum_{t=0}^{T-1} R^{*t}$.
Thus $R^*$ has optimal expected cost among policies for TC-MDP and we conclude that $(g^*, h^*, R^*)$ is a solution to TC-MDP.

If $X$ and $Y$ are irreducible, then by Proposition \ref{prop:transmat_char}, every transition coupling matrix in $\tc$ induces a transition coupling in $\tcrv$.
Since $R^*$ has minimal expected cost over all elements of $\tc$, it attains the minimum in Problem \eqref{eq:otc_problem_rv} and is thus an optimal transition coupling.
\end{proof}

\subsection{Proofs from Section \ref{sec:fast_approx_pia}}\label{app:complexity}
\subsubsection{Complexity of approximate transition coupling evaluation}\label{sec:complexity_of_fastpe}
\policyevaluationcomplexity*
\begin{proof}
Briefly, we remind the reader that $g = \overline{R} c$ and $h = \sum_{t=0}^\infty R^t (c - g)$, and that for integers $L, T \geq 1$ to be chosen later,
\begin{equation*}
\tilde{g} = \langle \nicefrac{1}{d^2} R^L c, \mathbbm{1}\rangle \mathbbm{1}\quad \mbox{and} \quad \tilde{h} = \sum_{t=0}^T R^t (c - \tilde{g}).
\end{equation*}
Note that the expression for $\tilde{g}$ may also be written as
\begin{equation*}
\tilde{g} = \left(\frac{1}{d^2} \sum\limits_s R^L(s, \cdot) c\right) \mathbbm{1}.
\end{equation*}
We begin by studying the approximation error for $\tilde{h}$ by first considering the intermediate quantity $h' := \sum_{t=0}^T R^t(c - g)$.
By the triangle inequality,
\begin{equation}\label{eq:htilde_triangle_ineq}
\|\tilde{h} - h\|_1 \leq \|\tilde{h} - h'\|_1 + \|h' - h\|_1,
\end{equation}
so it suffices to control the two terms on the right hand side.
Using H\"older's inequality, it follows that
\begin{align*}
\|\tilde{h} - h'\|_1 &= \left\|\sum\limits_{t=0}^T R^t(\tilde{g} - g)\right\|_1 \\
& \leq \sum\limits_{t=0}^T \left\|R^t (\tilde{g} - g)\right\|_1 \\
& \leq d^2 \sum\limits_{t=0}^T \max\limits_s \left|R^t(s, \cdot)(\tilde{g} - g)\right| \\
&\stackrel{(*)}{\leq} d^2 \sum\limits_{t=0}^T \|\tilde{g} - g\|_\infty \\
&= (T + 1) d^2 \|\tilde{g} - g\|_\infty,
\end{align*}
where (*) uses the fact that $\|R^t(s,\cdot)\|_1 = 1$ for every $t \geq 1$ and $s \in \X\times\Y$.
Next we wish to bound $\|h' - h\|_1$.
Since $R^t \overline{R} = \overline{R}$ for any $t \geq 1$, we may write $h$ and $h'$ as
\begin{equation*}
h = \sum\limits_{t=0}^\infty (R^t - \overline{R}) c \quad \mbox{and} \quad h' = \sum\limits_{t=0}^T (R^t - \overline{R}) c.
\end{equation*}
Moreover, since $R$ is aperiodic and irreducible, the Perron-Frobenius theorem implies that $\overline{R}(s, \cdot) = \lambda_R$ for every $s \in \X\times\Y$, where $\lambda_R \in \Delta_{d^2}$ is the unique stationary distribution of $R$.
Now by H\"older's inequality and the mixing assumption on $R$,
\begin{align*}
\|h' - h\|_1 &= \left\|\sum\limits_{t=T+1}^\infty (R^t - \overline{R}) c\right\|_1 \\
& \leq \sum\limits_{t=T+1}^\infty \|(R^t - \overline{R}) c\|_1 \\
& \leq d^2 \sum\limits_{t=T+1}^\infty \max\limits_s \left|(R^t(s,\cdot) - \lambda_R) c\right| \\
& \leq \|c\|_\infty d^2 \sum\limits_{t=T+1}^\infty \max\limits_s \left\| R^t(s,\cdot) - \lambda_R\right\|_1 \\
& \leq \|c\|_\infty d^2 \sum\limits_{t=T+1}^\infty M \alpha^t \\
& = M \|c\|_\infty \frac{\alpha^{T+1}}{1 - \alpha} d^2.
\end{align*}
Thus by \eqref{eq:htilde_triangle_ineq},
\begin{equation}\label{eq:htilde_triangle_ineq2}
\|\tilde{h} - h\|_1 \leq (T + 1) \|\tilde{g} - g\|_\infty d^2 + M \|c\|_\infty \frac{\alpha^{T+1}}{1 - \alpha} d^2.
\end{equation}
So in order to bound $\|\tilde{h} - h\|_1$, we require a bound on $\|\tilde{g} - g\|_\infty$.
Using the fact that $\tilde{g}$ and $g$ are constant vectors, H\"older's inequality and the mixing assumption on $R$,
\begin{align*}
\|\tilde{g} - g\|_\infty &= \left\|\left(\frac{1}{d^2} \sum\limits_s R^L(s,\cdot) c\right) \mathbbm{1} - \overline{R} c\right\|_\infty \\
&= \left|\frac{1}{d^2} \sum\limits_s R^L(s,\cdot) c - \lambda_R c\right| \\
&\leq \frac{1}{d^2} \sum\limits_s \left|(R^L(s, \cdot) - \lambda_R) c \right| \\
&\leq \frac{1}{d^2} \sum\limits_s \|c\|_\infty \|R^L(s,\cdot) - \lambda_R\|_1 \\
& \leq \frac{1}{d^2} \sum\limits_s M\alpha^L \|c\|_\infty \\
& \leq M \alpha^L \|c\|_\infty.
\end{align*}
Plugging this into \eqref{eq:htilde_triangle_ineq2},
\begin{equation*}
\|\tilde{h} - h\|_1 \leq M \alpha^L \|c\|_\infty (T + 1) d^2 + M \|c\|_\infty \frac{\alpha^{T+1}}{1 - \alpha} d^2.
\end{equation*}
Then choosing 
\begin{equation}\label{eq:choice_of_t}
T + 1 \geq \frac{1}{\log \alpha^{-1}} \log\left(\frac{2M \|c\|_\infty d^2 \varepsilon^{-1}}{(1 - \alpha)}\right) = \tilde{\calO}\left(\frac{1}{\log \alpha^{-1}} \log\left(\frac{M}{\varepsilon (1 - \alpha)}\right)\right)
\end{equation}
and
\begin{equation}\label{eq:choice_of_l}
L \geq \frac{\log\left(2(T+1)M\|c\|_\infty d^2 \varepsilon^{-1}\right)}{\log \alpha^{-1}} = \tilde{\calO}\left(\frac{1}{\log \alpha^{-1}} \log \left(\frac{M}{\varepsilon}\right)\right) ,
\end{equation}
we obtain $\|\tilde{h} - h\|_1 \leq \varepsilon$.
Note that for this choice of $L$, $\|\tilde{g} - g\|_\infty \leq \varepsilon / 2(T+1)$.
Since $T+ 1 \geq 1$, this implies that $\|\tilde{g} - g\|_\infty \leq \varepsilon$.
So the error for $\tilde{g}$ is controlled at the desired level as well.

Now consider the cost of computing $\tilde{g}$ and $\tilde{h}$.
Computing $\tilde{g}$ requires $L$ multiplications of a vector in $\bbR^{d^2}$ by $R \in \bbR^{d^2 \times d^2}$, which takes $\calO(Ld^4)$ time, followed by an inner product with $\mathbbm{1} \in \bbR^{d^2}$, multiplication with $\mathbbm{1} \in \bbR^{d^2}$ and multiplication by $\nicefrac{1}{d^2}$, each in $\calO(d^2)$ time.
This requires $\calO(L d^4) + \calO(d^2) + \calO(d^2) + \calO(d^2) = \calO(L d^4)$ time.
Letting $L$ be the minimum integer satisfying \eqref{eq:choice_of_l}, this takes time
\begin{equation*}
\mathcal{O}(Ld^4) = \tilde{\calO}\left(\frac{d^4}{\log \alpha^{-1}} \log \left(\frac{M}{\varepsilon}\right)\right).
\end{equation*}
On the other hand, given $\tilde{g}$, computing $\tilde{h}$ requires computing $c - \tilde{g} \in \bbR^{d^2}$ in $\calO(d^2)$ operations then multiplying by $R \in \bbR^{d^2\times d^2}$ $T+1$ times in $\calO(T d^4)$ time.
Finally, the sum may also be evaluated in $\calO(T d^4)$, requiring a total time of $\calO(d^2) + \calO(Td^4) + \calO(Td^4) = \calO(T d^4)$.
Letting $T$ be the minimum integer satisfying \eqref{eq:choice_of_t}, this takes time
\begin{equation}\label{eq:h_tilde_complexity}
\calO(Td^4) = \tilde{\calO}\left(\frac{d^4}{\log \alpha^{-1}} \log\left(\frac{M}{\varepsilon (1 - \alpha)}\right)\right).
\end{equation}
In total, we find that $\texttt{ApproxTCE}(R, L, T)$ takes time
\begin{equation*}
\tilde{\calO}\left(\frac{d^4}{\log \alpha^{-1}} \log \left(\frac{M}{\varepsilon}\right)\right) + \tilde{\calO}\left(\frac{d^4}{\log \alpha^{-1}} \log\left(\frac{M}{\varepsilon (1 - \alpha)}\right)\right) = \tilde{\calO}\left(\frac{d^4}{\log \alpha^{-1}} \log\left(\frac{M}{\varepsilon (1 - \alpha)}\right)\right).
\end{equation*}
\end{proof}

\subsubsection{Aperiodicity and irreducibility of elements of $\ri(\tc)$}\label{app:structure_of_reg_tc}
Next we prove Proposition \ref{prop:interior_tc} regarding the aperiodicity and irreducibility of elements of $\ri(\tc)$.
We begin with two elementary lemmas about the independent transition coupling.

\begin{lem}\label{lemma:ind_coup_of_order_m}
For any $k \geq 1$, $(P \otimes Q)^k = P^k \otimes Q^k$.
\end{lem}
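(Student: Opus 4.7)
The plan is to proceed by induction on $k$, exploiting the product structure of the independent coupling to factor sums over $\X \times \Y$ into products of separate sums over $\X$ and over $\Y$.

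The base case $k = 1$ holds by the definition of $P \otimes Q$ given in Section~\ref{sec:background_on_otc}. For the inductive step, assume $(P \otimes Q)^k = P^k \otimes Q^k$. I would then expand $(P \otimes Q)^{k+1}((x,y),(x',y'))$ as a matrix product, namely
\begin{equation*}
(P \otimes Q)^{k+1}((x,y),(x',y')) = \sum_{(\tilde{x}, \tilde{y}) \in \X \times \Y} (P \otimes Q)^{k}((x,y), (\tilde{x}, \tilde{y})) \, (P \otimes Q)((\tilde{x}, \tilde{y}), (x', y')).
\end{equation*}
Applying the inductive hypothesis to the first factor and the definition of $P \otimes Q$ to the second factor, the summand becomes $P^k(x, \tilde{x}) \, Q^k(y, \tilde{y}) \, P(\tilde{x}, x') \, Q(\tilde{y}, y')$. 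Because the variables $\tilde{x}$ and $\tilde{y}$ are separated, the double sum factors as
\begin{equation*}
\Bigl( \textstyle\sum_{\tilde{x}} P^k(x, \tilde{x}) P(\tilde{x}, x') \Bigr) \Bigl( \textstyle\sum_{\tilde{y}} Q^k(y, \tilde{y}) Q(\tilde{y}, y') \Bigr) = P^{k+1}(x, x') \, Q^{k+1}(y, y'),
\end{equation*}
which is exactly $(P^{k+1} \otimes Q^{k+1})((x,y),(x',y'))$, completing the induction.

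There is no real obstacle here — the only thing to be careful about is tracking the indexing convention for vectors and matrices on $\X \times \Y$ (established in Section~\ref{sec:background_on_otc}) so that the factorization of the summand into an $\X$-part and a $\Y$-part is unambiguous. Once that is in place, the result is essentially the observation that the Kronecker product of square stochastic matrices is multiplicative under powers.
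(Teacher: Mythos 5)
Your proof is correct and is essentially identical to the paper's: both proceed by induction on $k$, expand $(P\otimes Q)^{k+1}$ as a sum over intermediate states $(\tilde{x},\tilde{y})$, apply the inductive hypothesis, and factor the double sum into separate sums over $\tilde{x}$ and $\tilde{y}$. No issues.
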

\begin{proof}
The result clearly holds for $k = 1$, so assume that it holds for some $k \geq 1$.
For any $(x,y)$, $(x',y') \in \X\times\Y$, we can show
\begin{align*}
(P\otimes Q)^{k+1}((x,y),(x',y')) &= \sum\limits_{\tilde{x},\tilde{y}} (P\otimes Q)^k((x,y),(\tilde{x},\tilde{y})) \, P\otimes Q((\tilde{x},\tilde{y}),(x',y')) \\
&= \sum\limits_{\tilde{x},\tilde{y}} P^k(x,\tilde{x}) \, Q^k(y,\tilde{y}) \, P(\tilde{x}, x') \, Q(\tilde{y},y') \\
&= \sum\limits_{\tilde{x}} P^k(x,\tilde{x})\, P(\tilde{x}, x') \, \sum\limits_{\tilde{y}} Q^k(y,\tilde{y}) \, Q(\tilde{y},y') \\
&= P^{k+1}(x,x') \, Q^{k+1}(y,y') \\
&= P^{k+1} \otimes Q^{k+1}((x,y), (x',y')).
\end{align*}
By induction, the lemma is proven.
\end{proof}

\begin{lem}\label{lemma:irreducibility_of_ind_coup}
If $P$ and $Q$ are aperiodic and irreducible, then the independent transition coupling $P\otimes Q$ is aperiodic and irreducible.
\end{lem}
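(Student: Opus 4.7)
The plan is to reduce aperiodicity and irreducibility of $P \otimes Q$ to the corresponding properties of $P$ and $Q$ separately, via the product formula from Lemma \ref{lemma:ind_coup_of_order_m}. The key observation is that a transition matrix $R$ on a finite state space is both aperiodic and irreducible if and only if there exists some $N \geq 1$ such that $R^k(s, s') > 0$ for every pair of states $(s, s')$ and every $k \geq N$. This uniform bound characterization is the standard consequence of irreducibility plus aperiodicity for finite-state chains (see for example \cite{levin2017markov}), and it will be what makes the proof go through cleanly.

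First, I would apply this characterization to $P$ and $Q$ individually: since each is aperiodic and irreducible on a finite state space, there exist integers $N_P, N_Q \geq 1$ such that $P^k(x, x') > 0$ for all $x, x' \in \X$ and $k \geq N_P$, and analogously $Q^k(y, y') > 0$ for all $y, y' \in \Y$ and $k \geq N_Q$. Set $N = \max(N_P, N_Q)$.

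Next, I would invoke Lemma \ref{lemma:ind_coup_of_order_m}, which states that $(P \otimes Q)^k = P^k \otimes Q^k$, to write
\begin{equation*}
(P \otimes Q)^k\bigl((x,y), (x',y')\bigr) = P^k(x, x') \, Q^k(y, y').
\end{equation*}
For every $k \geq N$ and every pair of paired states $(x,y), (x',y') \in \X \times \Y$, both factors on the right are strictly positive, so the left-hand side is strictly positive. Applying the characterization in the reverse direction then yields that $P \otimes Q$ is both irreducible and aperiodic, completing the proof.

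I do not anticipate any real obstacle here: the argument is essentially a one-line application of Lemma \ref{lemma:ind_coup_of_order_m} combined with the standard equivalence between aperiodicity-plus-irreducibility and uniform strict positivity of powers. The only thing worth being slightly careful about is cleanly invoking (or briefly justifying) that equivalence, rather than separately verifying irreducibility (existence of some $k$ with positive transition) and aperiodicity (gcd of return times equals one) and then combining them.
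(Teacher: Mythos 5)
Your proposal is correct and follows essentially the same route as the paper: both use the fact that aperiodicity plus irreducibility of a finite-state chain yields eventual strict positivity of all powers (the paper cites Proposition 1.7 of \cite{levin2017markov}), combine the two thresholds, and apply Lemma \ref{lemma:ind_coup_of_order_m} to conclude $(P\otimes Q)^k = P^k \otimes Q^k > 0$ for all large $k$. The only cosmetic difference is that the paper verifies the converse direction explicitly (irreducibility from positivity, and aperiodicity via $\gcd\{k_0, k_0+1, \ldots\} = 1$) rather than invoking the equivalence as a single if-and-only-if.
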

\begin{proof}
Since $P$ and $Q$ are aperiodic and irreducible, there exist $\ell_0, m_0 \geq 1$ such that for any $\ell \geq \ell_0$ and $m \geq m_0$, $P^\ell > 0$ and $Q^m > 0$ \citep[Proposition 1.7]{levin2017markov}.
Defining $k_0 := \ell_0 \vee m_0$, for every $k \geq k_0$, $P^k, Q^k > 0$.
By Lemma \ref{lemma:ind_coup_of_order_m}, it follows that $(P\otimes Q)^k = P^k \otimes Q^k > 0$ for all $k \geq k_0$.
Thus $P\otimes Q$ is irreducible.
Furthermore, for every $s \in \X \times \Y$, $\gcd\{k \geq 1: (P\otimes Q)^k(s,s) > 0 \} = \gcd\{..., k_0, k_0+1,...\} = 1$ and we conclude that $P\otimes Q$ is also aperiodic.
\end{proof}

Next we prove Proposition \ref{prop:interior_tc}.
Recall that for a set $\calU \subset \bbR^n$, $B_\varepsilon(u) \subset \bbR^n$ denotes the open ball of radius $\varepsilon > 0$ centered at $u \in \calU$, $\aff(\calU)$ denotes the affine hull, defined as $\aff(\calU) = \{\sum_{i=1}^k \alpha_i u_i : k \in \mathbb{N}, u_1, ..., u_k \in \calU, \sum_{i=1}^k \alpha_i = 1\}$, and $\ri(\calU)$ denotes the relative interior, defined as $\ri(\calU) =\{ u \in \calU: \exists \varepsilon > 0 \mbox{  s.t.  } B_\varepsilon(u) \cap \aff(\calU) \subset \calU\}$.

\interiortc*
\begin{proof}
First we establish that $P \otimes Q(s, s') > 0$ implies that $R(s, s') > 0$ for every $s, s' \in \X \times \Y$.
Suppose for the sake of contradiction that there exist $s, s' \in \X\times\Y$ such that $P\otimes Q(s,s') > 0$ and $R(s, s') = 0$.
By definition, there is some $\varepsilon > 0$ such that $B_\varepsilon(R) \cap \aff(\tc) \subset \tc$.
Defining $R' = R + \frac{\varepsilon}{2} d$ where $d = (R - P\otimes Q)/\|R - P\otimes Q\|_2$, one may verify that $R' \in B_\varepsilon(R) \cap \aff(\tc)$.
Thus by the choice of $R$, we have $R' \in \tc$.
However, our assumptions imply that $R'(s,s') < 0$, a contradiction.
This proves the preliminary claim.

By nature of the fact that $R((x,y), \cdot) \in \Pi(P(x,\cdot), Q(y, \cdot))$, one may easily establish that the reverse implication holds: $R(s,s') > 0$ implies that $P\otimes Q(s,s') > 0$ for every $s, s' \in \X\times\Y$.
As such, one may find a positive constant $a > 0$ such that $a P \otimes Q \leq R$ where the inequality is understood to hold element-wise.
Now, by Lemma \ref{lemma:irreducibility_of_ind_coup}, $P \otimes Q$ is aperiodic and irreducible.
Thus there exists $k \geq 1$ such that $(P\otimes Q)^k > 0$.
Thus, $R^k \geq a^k (P \otimes Q)^k > 0$ and it follows that $R$ is aperiodic and irreducible as well.
The mixing property of $R$ follows from \cite[Theorem 4.9]{levin2017markov}.
\end{proof}

\subsubsection{Complexity of entropic transition coupling improvement}\label{sec:complexity_of_entropic_pi}
Next we aim to prove Theorem \ref{thm:policy_improvement_complexity}, showing that \texttt{EntropicTCI} returns an improved transition coupling with error bounded by $\varepsilon > 0$ in $\tilde{\calO}(d^4 \varepsilon^{-4})$ time.
Recall that \texttt{EntropicTCI} improves policies by solving $d^2$ entropy-regularized OT transport problems, calling the \texttt{ApproxOT} algorithm \cite{altschuler2017near} for each problem.
Before we can prove Theorem \ref{thm:policy_improvement_complexity}, we must analyze the computational complexity of \texttt{ApproxOT}.
In the following discussion as well as Lemma \ref{lemma:bound_on_sinkhorn_error}, we find it most convenient to adopt the notation of \cite{altschuler2017near}.
Thus, we fix two probability vectors $r \in \Delta_m$ and $c \in \Delta_n$, a non-negative cost matrix $C \in \bbR_+^{m\times n}$, a regularization parameter $\xi > 0$, and an error tolerance $\varepsilon > 0$.
For vectors in $\bbR^m$ or $\bbR^n$ and matrices in $\bbR^{m\times n}$, we temporarily drop the double-indexing convention, using subscripts instead to denote elements (i.e. $u_i$ and $X_{ij}$).
Finally, for a coupling $X \in \Pi(r,c)$, let $H(X) = -\sum_{ij} X_{ij} \log X_{ij}$ be the Shannon entropy.

Recall that the entropic OT problem is defined as,
\begin{align}\label{eq:appendix_eot_problem}
\begin{split}
\mbox{minimize}\quad & \langle X, C\rangle - \frac{1}{\xi} H(X) \\
\mbox{subject to}\quad & X \in \Pi(r,c).
\end{split}
\end{align}
In \cite{cuturi2013sinkhorn}, Cuturi showed that solutions to \eqref{eq:appendix_eot_problem} have a computationally convenient form.
Namely, if $X_\xi^* \in \Pi(r, c)$ is the solution to \eqref{eq:appendix_eot_problem}, then it is unique and can be written as $X_\xi^* = \diag(e^{u^*}) K \diag(e^{v^*})$ for some $u^* \in \bbR^m$ and $v^* \in \bbR^n$, where $K = e^{-\xi C}$.
As a result, \eqref{eq:appendix_eot_problem} can be formulated as a matrix scaling problem and solved using Sinkhorn's algorithm \citep{sinkhorn1967diagonal}.

More recent work \cite{altschuler2017near} introduced the \texttt{ApproxOT} algorithm (Algorithm \ref{alg:approx_ot}), which combines Sinkhorn's algorithm with a rounding step to obtain an approximate solution to the OT problem.
In particular, \texttt{ApproxOT} runs \texttt{Sinkhorn} (Algorithm \ref{alg:sinkhorn}) to obtain a coupling of the form $X' = \diag(e^{u'}) K \diag(e^{v'}) \in \Pi(r', c')$, where $\|r - r'\|_1 + \|c - c'\|_1 \leq \varepsilon$, then applies \texttt{Round} (Algorithm \ref{alg:round}) to $X'$ to obtain $\hat{X} \in \Pi(r, c)$.
\texttt{ApproxOT} was originally intended for approximating the OT cost, but we use it to approximate the regularized optimal coupling $X_\xi^* \in \Pi(r, c)$.
In particular, we wish to show that for appropriate choice of parameters, \texttt{ApproxOT} yields a coupling $\hat{X} \in \Pi(r, c)$ such that $\|\hat{X} - X_\xi^*\|_1 \leq \varepsilon$ in $\tilde{\calO}(mn \varepsilon^{-4})$ time.
To the best of our knowledge, this result has not appeared in the literature.
So we state and prove it in Lemma \ref{lemma:bound_on_sinkhorn_error}.

\begin{center}
\begin{minipage}{0.52\textwidth}
\begin{algorithm}[H]
\SetKwInOut{Input}{input}
\SetKwInOut{Result}{result}
\DontPrintSemicolon
\SetAlgoLined
\Result{Optimal coupling}
\Input{$r, c, C, \xi, \varepsilon$}
\tcc{Subset to positive elements}
$\mathcal{R} \leftarrow \{i: r_i > 0\}$, $\mathcal{C} \leftarrow \{j: c_j > 0\}$\;
$\mathcal{S} \leftarrow \mathcal{R} \times \mathcal{C}$, $\tilde{r} \leftarrow r_{\mathcal{R}}$, $c \leftarrow c_{\mathcal{C}}$\;

\tcc{Set parameters}
$J \leftarrow 4 \log n \|C_{\mathcal{S}}\|_\infty / \varepsilon - \log \min_{ij} \{\tilde{r}_i, \tilde{c}_j\}$\;
$\varepsilon' \leftarrow \varepsilon^2 / 8J$\;
$K \leftarrow \exp(-\xi C_{\mathcal{S}})$\;

\tcc{Approximate Sinkhorn projection}
$X' \leftarrow \texttt{Sinkhorn}(K, \tilde{r}, \tilde{c}, \varepsilon')$\;

\tcc{Round to feasible coupling}
$X' \leftarrow \texttt{Round}(X', \Pi(\tilde{r}, \tilde{c}))$\;

\tcc{Replace zeroes}
$\hat{X} \leftarrow 0_{d \times d}$, $\hat{X}_{\mathcal{S}} \leftarrow X'$\;
\Return{$\hat{X}$}
\caption{\texttt{ApproxOT}}\label{alg:approx_ot}
\end{algorithm}
\end{minipage}
\hspace{1mm}
\begin{minipage}{0.45\textwidth}
\begin{algorithm}[H]
\SetKwInOut{Input}{input}
\SetKwInOut{Result}{result}
\DontPrintSemicolon
\SetAlgoLined
\Result{Approximate Sinkhorn projection}
\Input{$K, r, c, \varepsilon'$}
$k \leftarrow 0$\;
$X_0 \leftarrow K / \|K\|_1, \, u^0 \leftarrow 0, \, v^0 \leftarrow 0$\;
\While{$\|X_k\mathbbm{1} - r\|_1 + \|X_k^\top \mathbbm{1} - c\|_1 > \varepsilon'$}{
$k \leftarrow k + 1$\;
\If{$k$ odd}{
	$r^k \leftarrow X_k \mathbbm{1}$\;
	$u_i \leftarrow \log(r_i / r^k_i)$ for $i \in [n]$\;
	$u^k \leftarrow u^{k-1} + u$, $v^k \leftarrow v^{k-1}$\;
}
\Else{
	$c^k \leftarrow X_k^\top \mathbbm{1}$\;
	$v_j \leftarrow \log(c_j / c_j^k)$ for $j \in [n]$\;
	$v^k \leftarrow v^{k-1} + v$, $u^k \leftarrow u^{k-1}$\;
}
$X_k \leftarrow \diag(e^{u^k}) K \diag(e^{v^k})$\;
}
\Return{$X_k$}
\caption{\texttt{Sinkhorn}}\label{alg:sinkhorn}
\end{algorithm}
\end{minipage} 
\end{center}


\begin{center}
\begin{minipage}{0.4\textwidth}
\begin{algorithm}[H]
\SetKwInOut{Input}{input}
\SetKwInOut{Result}{result}
\DontPrintSemicolon
\SetAlgoLined
\Result{Feasible coupling}
\Input{$F, \Pi(r, c)$}
$r' \leftarrow F \mathbbm{1}$\;
$X \leftarrow \diag(x)$ with $x_i = r_i / r'_i \wedge 1$\;
$F' \leftarrow X F$\;
$c' \leftarrow (F')^\top \mathbbm{1}$\;
$Y \leftarrow \diag(y)$ with $y_j = c_j / c'_j \wedge 1$\;
$F'' \leftarrow F' Y$\;
$r'' \leftarrow F'' \mathbbm{1}$, $c'' \leftarrow (F'')^\top \mathbbm{1}$\;
$\err_{r} \leftarrow r - r''$, $\err_{c} \leftarrow c - c''$\;
\Return{$F'' + \err_{r}\err_{c}^\top / \|\err_{r}\|_1$}
\caption{\texttt{Round}}\label{alg:round}
\end{algorithm}
\end{minipage}
\end{center}

Note that \texttt{ApproxOT} was originally defined for fully-supported marginal probability vectors $(r, c > 0)$.
However, this will not always be the case in Algorithm \ref{alg:approx_entropic_pi}.
In particular, transition couplings may be sparse, even when $P$ and $Q$ are strictly positive.
Thus we add an extra step to \texttt{ApproxOT} that subsets the quantities of interest to their positive entries.
For an index set $\mathcal{I}$ and a vector / matrix $A$ we let $A_{\mathcal{I}}$ denote the subvector / matrix that retains only elements with indices contained in $\mathcal{I}$.

\begin{lem}\label{lemma:bound_on_sinkhorn_error}
Let $r \in \Delta_m$ and $c \in \Delta_n$ have all positive entries, $C \in \mathbb{R}^{m\times n}_+$, $\xi > 0$ and $\varepsilon \in (0,1)$.
Then $\emph{\texttt{ApproxOT}}(r, c, C, \xi, \varepsilon)$ (Algorithm \ref{alg:approx_ot}) returns a coupling $\hat{X} \in \Pi(r, c)$ such that $\|\hat{X} - X_\xi^*\|_1 \leq \varepsilon$, where $X^*_\xi \in \argmin_{X \in \Pi(r,c)} \langle X, C \rangle - \nicefrac{1}{\xi} H(X)$, in time $\tilde{\calO}(mn \varepsilon^{-4} \xi \|C\|_\infty (\xi^2 \|C\|_\infty^2 + (\log b^{-1})^2))$ where $b = \min_{ij} \{r_i, c_j\}$.
\end{lem}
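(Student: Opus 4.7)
My plan is to control $\|\hat X - X^*_\xi\|_1$ by passing through the intermediate Sinkhorn iterate $X'$ returned by Algorithm \ref{alg:sinkhorn} and then invoking strong convexity of the entropy-regularized objective. Concretely, by the triangle inequality
\begin{equation*}
\|\hat X - X^*_\xi\|_1 \;\leq\; \|\hat X - X'\|_1 + \|X' - X^*_\xi\|_1,
\end{equation*}
and I would bound each term in turn. For the rounding term, I would invoke the analysis of \texttt{Round} from \cite{altschuler2017near} (their Lemma 7), which gives $\|\hat X - X'\|_1 \leq 2(\|X'\mathbbm 1 - r\|_1 + \|(X')^\top\mathbbm 1 - c\|_1)$; the Sinkhorn stopping criterion then forces this to be at most $2\varepsilon'$, where $\varepsilon'$ is the tolerance passed to \texttt{Sinkhorn}.

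For the Sinkhorn term, I would use that the negative entropy is $1$-strongly convex with respect to $\|\cdot\|_1$ on the probability simplex (Pinsker's inequality), so the regularized objective $f(X)=\langle X,C\rangle-H(X)/\xi$ is $1/\xi$-strongly convex on $\Pi(r,c)$. Since $X^*_\xi$ is the unique minimizer of $f$ over $\Pi(r,c)$, strong convexity yields $\|X-X^*_\xi\|_1^2 \leq 2\xi(f(X)-f(X^*_\xi))$ for every $X\in\Pi(r,c)$, and in particular for $\hat X$. This reduces the task to controlling the objective gap $f(\hat X)-f(X^*_\xi)$. I would split this as $f(\hat X)-f(X')+f(X')-f(X^*_\xi)$: the first difference is small because $\|\hat X - X'\|_1 \leq 2\varepsilon'$ (the linear part contributes at most $2\varepsilon'\|C\|_\infty$, and the entropy part is controlled via the pointwise Lipschitz constant of $x\mapsto -x\log x$ on entries bounded below by $\min\{r_i,c_j\}$, which is $b$, up to a factor of $K_{ij}=e^{-\xi C_{ij}}$); the second difference is controlled by the dual characterization of the Sinkhorn iterate $X'=\diag(e^{u'})K\diag(e^{v'})$, whose marginals $(r',c')$ satisfy $\|r-r'\|_1+\|c-c'\|_1\leq \varepsilon'$, combined with the fact that $X^*_\xi$ has the identical matrix-scaling form with true marginals $(r,c)$.

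The hardest step is this last sub-step: showing that $f(X')\leq f(X^*_\xi)+O(\varepsilon'\cdot(\xi\|C\|_\infty+\log b^{-1}))$ despite $X'$ being infeasible for $\Pi(r,c)$. I plan to route around the infeasibility by introducing $X^*_{\xi,r',c'}$, the entropic optimum for the perturbed marginals $(r',c')$, for which the Sinkhorn iterate is an approximate minimizer in the usual sense, and then transferring the bound back to $X^*_\xi$ by continuity of the regularized OT value in the marginals. The perturbation analysis in this last step relies on $\log b^{-1}$ to control the dual variables and hence the entries of the scaling matrices, which is precisely where the $(\log b^{-1})^2$ factor will enter the runtime.

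Finally, I would choose $\varepsilon'=\Theta(\varepsilon^2/[\xi(\xi^2\|C\|_\infty^2+(\log b^{-1})^2)])$ so that the combined error bound $2\varepsilon'+\sqrt{2\xi(f(\hat X)-f(X^*_\xi))}$ is at most $\varepsilon$, then plug this tolerance into the per-iteration cost $\calO(mn)$ and the iteration-complexity bound for \texttt{Sinkhorn} from \cite{altschuler2017near} (which scales like $\tilde{\calO}(\xi\|C\|_\infty/\varepsilon'+J)$ where $J=\tilde\Theta(\xi\|C\|_\infty+\log b^{-1})$). Putting the pieces together and absorbing logarithmic factors then yields the claimed runtime of $\tilde{\calO}\!\left(mn\varepsilon^{-4}\xi\|C\|_\infty\bigl(\xi^2\|C\|_\infty^2+(\log b^{-1})^2\bigr)\right)$.
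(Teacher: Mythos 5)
Your skeleton (triangle inequality through the Sinkhorn iterate $X'$, plus Lemma 7 of Altschuler et al.\ for the \texttt{Round} step) matches the paper's, but the way you handle the core term $\|X' - X^*_\xi\|_1$ diverges from the paper's argument and contains genuine gaps. The paper's key observation is structural: both $X^*_\xi$ and $X'$ are diagonal scalings of the \emph{same} kernel $K = e^{-\xi C}$, so writing $\log X^*_{\xi,ij} = u^*_i + v^*_j - \xi C_{ij}$ and $\log X'_{ij} = u'_i + v'_j - \xi C_{ij}$ makes the cost terms cancel and yields the exact identity $\calK(X^*_\xi \| X') = \langle u^* - u', r\rangle + \langle v^* - v', c\rangle = \psi(u',v') - \psi(u^*,v^*)$, the \emph{dual} suboptimality gap. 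That gap is then bounded by $J(\|r'-r\|_1 + \|c'-c\|_1) \leq J\varepsilon'$ via Lemma 2 of Dvurechensky et al., and Pinsker converts it to $\|X'-X^*_\xi\|_1 \leq \sqrt{2J\varepsilon'}$ with no factor of $\xi$ beyond what sits inside $J$. Setting $\varepsilon' = \varepsilon^2/8J$ then gives both halves of the error bound and the stated runtime $\tilde{\calO}(mn(\varepsilon')^{-2}\xi\|C\|_\infty) = \tilde{\calO}(mn\varepsilon^{-4}\xi\|C\|_\infty J^2)$.

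Your primal route has three concrete problems. First, the strong-convexity inequality $\|\hat X - X^*_\xi\|_1^2 \leq 2\xi\,(f(\hat X) - f(X^*_\xi))$ carries the modulus $1/\xi$ of the entropy term, so even if you could show $f(\hat X) - f(X^*_\xi) = O(\varepsilon' J)$ you would need $\varepsilon' \sim \varepsilon^2/(\xi J)$, which inflates the Sinkhorn iteration count $(\varepsilon')^{-2}\log(s/\ell)$ by a factor of $\xi^2$ relative to the lemma's claim; your final tolerance $\varepsilon' = \Theta(\varepsilon^2/[\xi J^2])$ and the quoted iteration bound $\tilde{\calO}(\xi\|C\|_\infty/\varepsilon' + J)$ do not reproduce the stated $\varepsilon^{-4}$ scaling either. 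Second, your bound on $|H(\hat X) - H(X')|$ via a "pointwise Lipschitz constant of $x \mapsto -x\log x$ on entries bounded below by $b$" is unjustified: the entries of $X'$ and $\hat X$ are not bounded below by $b = \min_{ij}\{r_i,c_j\}$ (they can be as small as roughly $b^2 e^{-\xi\|C\|_\infty}$, or zero after rounding), and $-x\log x$ is not Lipschitz near $0$; you would need the standard continuity-of-entropy estimate instead. Third, the step you yourself flag as hardest — comparing $f(X')$ to $f(X^*_\xi)$ across the perturbed marginals $(r',c')$ — is precisely where all the content lies, and it is only asserted; carrying it out requires the same dual-variable bounds from Dvurechensky et al.\ that the paper invokes directly, so the detour through the primal gains nothing and loses the tight constants. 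The fix is to drop the primal objective entirely and bound $\calK(X^*_\xi\|X')$ by the dual gap as above.
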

\begin{proof}
Let $\varepsilon' > 0$, $K = e^{-\xi C}$, $X' \in \Delta_{m \times n}$ be the output of $\texttt{Sinkhorn}(K, r, c, \varepsilon')$ and $\hat{X} \in \Pi(r, c)$ be the output of $\texttt{Round}(X', \Pi(r, c))$.
By the triangle inequality,
\begin{equation}\label{eq:sinkhorn_triangle_ineq}
\|\hat{X} - X_\xi^*\|_1 \leq \|\hat{X} - X'\|_1 + \|X' - X_\xi^*\|_1.
\end{equation}
We will first describe how to control the second term on the right hand side.
By Pinsker's inequality, $\|X' - X^*_\xi\|_1^2 \leq 2 \calK(X^*_\xi \| X')$, so it suffices to bound the KL-divergence between the two couplings.
From Lemma 2 of \cite{cuturi2013sinkhorn} that $X^*_\xi = \diag(e^{u^*})K \diag(e^{v^*})$ for some $u^* \in \bbR^m$, $v^* \in \mathbb{R}^n$, and $K = e^{-\xi C}$.
By construction we also have $X' = \diag(e^{u'}) K \diag(e^{v'})$ for some $u' \in \bbR^m$ and $v' \in \mathbb{R}^n$.
Now rewriting the KL-divergence,
\begin{align*}
\calK(X_\xi^* \| X') &= \sum\limits_{ij} X_{\xi,ij}^* \log X^*_{\xi, ij} - \sum\limits_{ij} X^*_{\xi, ij} \log X'_{ij} \\
&= \sum\limits_{ij} X_{\xi,ij}^* \left(u^*_i + v^*_j - \xi C_{ij}\right) - \sum\limits_{ij} X_{\xi,ij}^* \left(u'_i + v'_j - \xi C_{ij}\right) \\
&= \sum\limits_{ij} X_{\xi, ij}^* (u^*_i - u'_i) + \sum\limits_{ij} X_{\xi, ij}^* (v^*_j - v'_j) \\
&= \sum\limits_i (u^*_i - u'_i) \sum\limits_j X_{\xi,ij}^* + \sum_j (v^*_j - v'_j) \sum\limits_i X_{\xi, ij}^* \\
&= \sum\limits_i (u^*_i - u'_i) r_i + \sum\limits_j (v^*_j - v'_i) c_j \\
&= \langle u^* - u', r \rangle + \langle v^* - v', c \rangle.
\end{align*}
Writing $\psi(u, v) = \langle \mathbbm{1}, \diag(e^u)K \diag(e^v)\mathbbm{1}\rangle - \langle u, r\rangle - \langle v, c\rangle$ for the objective of the dual entropic OT problem \citep{dvurechensky2018computational}, we immediately see that
\begin{equation*}
\tilde{\psi}(u', v') := \psi(u', v') - \psi(u^*, v^*) = \langle u^* - u', r \rangle + \langle v^* - v', c \rangle.
\end{equation*}
Now let $r'$ and $c'$ be the row and column marginals of $X'$, respectively.
Using the two previous displays and applying the upper bound from \cite[Lemma 2]{dvurechensky2018computational}, we obtain
\begin{equation*}
\calK(X_\xi^* \|X') = \tilde{\psi}(u, v) \leq J\left(\|r' - r\|_1 + \|c' - c\|_1\right),
\end{equation*}
where $J = \xi\|C\|_\infty - \log \min_{ij}\{r_i, c_j\}$.
For ease of notation, we will let $b := \min_{ij}\{r_i, c_j\}$.
Now by \cite[Theorem 2]{altschuler2017near} and the fact that each iteration of \texttt{Sinkhorn} takes $\calO(mn)$ time, $\texttt{Sinkhorn}(K, r, c, \varepsilon')$ returns a coupling with $X' \in \Pi(r', c')$ satisfying $\|r' - r\|_1 + \|c' - c\|_1 \leq \varepsilon'$ in $\mathcal{O}(mn (\varepsilon')^{-2} \log(s / \ell))$ time where $s = \sum_{ij} K_{ij}$ and $\ell = \min_{ij} K_{ij}$.
As $C$ is non-negative, $s = \sum_{ij} e^{-\xi C_{ij}} \leq \sum_{ij} 1 = mn$.
Furthermore, $\ell = e^{-\xi \|C\|_\infty}$ so we get a total runtime of $\mathcal{O}(mn (\varepsilon')^{-2} (\log mn + \xi \|C\|_\infty)) = \tilde{\mathcal{O}}(mn (\varepsilon')^{-2} \xi \|C\|_\infty)$.
Now choosing $\varepsilon' = \varepsilon^2 / 8J$, we have
\begin{equation*}
\|X' - X_\xi^*\|_1 \leq \sqrt{2J (\|r' - r\|_1 + \|c' - c\|_1)} \leq \sqrt{2J \varepsilon'} = \sqrt{2J \varepsilon^2 / 8J} = \varepsilon / 2.
\end{equation*}
Since $\varepsilon' = \varepsilon^2 / 8J$, the runtime becomes 
\begin{align*}
\tilde{\mathcal{O}}(mn (\varepsilon')^{-2} \xi\|C\|_\infty) &= \tilde{\mathcal{O}}(mn (\varepsilon^2 / 8J)^{-2} \xi \|C\|_\infty) \\
&= \tilde{\mathcal{O}}(mn \varepsilon^{-4} \xi \|C\|_\infty J^2) \\
&= \tilde{\mathcal{O}}(mn \varepsilon^{-4} \xi \|C\|_\infty (\xi \|C\|_\infty - \log b)^2) \\
&= \tilde{\mathcal{O}}(mn \varepsilon^{-4} \xi\|C\|_\infty (\xi^2 \|C\|_\infty^2 + (\log b^{-1})^2)).
\end{align*}
Now we must bound $\|\hat{X} - X'\|_1$.
By \cite[Lemma 7]{altschuler2017near}, Algorithm \ref{alg:round} returns $\hat{X}$ satisfying
\begin{equation*}
\|\hat{X} - X'\|_1 \leq 2(\|r' - r\|_1 + \|c' - c\|_1),
\end{equation*}
in $\mathcal{O}(mn)$ time.
So it suffices to check that $\|r' - r\|_1 + \|c' - c\|_1 \leq \varepsilon' = \varepsilon^2 / 8J$ is enough to guarantee that $\|\hat{X} - X'\|_1 \leq \varepsilon / 2$.
This will follow immediately from $\|\hat{X} - X'\|_1 \leq 2 \varepsilon' = \varepsilon^2 / 4J \leq \varepsilon / 2J$ if we can establish that $J \geq 1$.
To see this, first note that $b = \min_{i,j} \{r_i, c_j\} \leq 1/(m \vee n)$.
This implies that $-\log b \geq \log (m \vee n)$ and since $\xi > 0$,
\begin{equation*}
J = \xi \|C\|_\infty - \log b \geq -\log b \geq \log (m \vee n) \geq 1,
\end{equation*}
assuming that $m \vee n > 2$.
If $m \vee n = 2$, then one can check that letting $\varepsilon' = \varepsilon^2 \log 2 / 8J$ is enough to obtain the desired bounds without affecting the computational complexity.
Thus by \eqref{eq:sinkhorn_triangle_ineq}, we obtain $\|\hat{X} - X^*_\xi\|_1 \leq \varepsilon$ in time $\tilde{\mathcal{O}}(mn \varepsilon^{-4} \xi \|C\|_\infty (\xi^2 \|C\|_\infty^2 + (\log b^{-1})^2) + mn) = \tilde{\mathcal{O}}(mn \varepsilon^{-4} \xi \|C\|_\infty (\xi^2 \|C\|_\infty^2 + (\log b^{-1})^2))$.
\end{proof}

Now we can proceed to the proof of Theorem \ref{thm:policy_improvement_complexity}.

\policyimprovementcomplexity*
\begin{proof}
Without loss of generality, we may assume that $h$ is non-negative.
Otherwise, one can consider the modified bias $h + \|h\|_\infty \mathbbm{1}$.
Since we are interested in optimal couplings with respect to $h$ rather than expected cost and $\|h + \|h\|_\infty \mathbbm{1}\|_\infty = \calO(\|h\|_\infty)$, this has no effect on the output of \texttt{ApproxOT} or the computational complexity.
Now, in order to analyze the complexity of \texttt{EntropicTCI}, we must first analyze the complexity of \texttt{ApproxOT}.
Fix $s=(x,y) \in \X \times \Y$ and, after removing points outside of the supports of $P(x,\cdot)$ and $Q(y,\cdot)$, consider the entropic OT problem for marginal probability measures $P(x,\cdot)$ and $Q(y,\cdot)$ and cost $h$,
\begin{align}\label{eq:dual_eot}
\begin{split}
\mbox{minimize} \quad &\langle r, h \rangle - \frac{1}{\xi} H(r) \\
\mbox{subject to} \quad &r \in \tcrow.
\end{split}
\end{align}
Then by \cite[Lemma 2]{cuturi2013sinkhorn}, there exists a unique solution $r^*_s \in \tcrow$ to problem \eqref{eq:dual_eot}.
Furthermore by Lemma \ref{lemma:bound_on_sinkhorn_error}, $\texttt{ApproxOT}(P(x,\cdot)^\top, Q(y,\cdot)^\top, h, \xi, \varepsilon)$ returns $\hat{r}_s \in \tcrow$ such that $\|\hat{r}_s - r^*_s\|_1 \leq \varepsilon$ in $\tilde{\calO}(d^2 \varepsilon^{-4})$ time.
One may also verify using arguments in \cite{altschuler2017near} that $\hat{r}_s \in \ri(\tcrow)$.

Now we may analyze the error and computational complexity of $\texttt{EntropicTCI}(h, \xi, \varepsilon)$.
Calling \newline $\texttt{ApproxOT}(P(x,\cdot)^\top, Q(y,\cdot)^\top, h, \xi, \varepsilon)$ for every $s = (x,y) \in \X\times\Y$, we obtain $\hat{R} \in \tc$, where $\hat{R}(s,\cdot) = \hat{r}_s(\cdot)$, in $d^2 \tilde{\calO}(d^2 \varepsilon^{-4}) = \tilde{\calO}(d^4 \varepsilon^{-4})$ time.
Note that since the relative interior commutes with cartesian products of convex sets, $\hat{R} \in \ri(\tc)$.
Then defining $R^* \in \tc$ such that $R^*(s,\cdot) = r^*_s(\cdot)$, we have
\begin{equation*}
\max\limits_s \|\hat{R}(s,\cdot) - R^*(s,\cdot) \|_1 = \max\limits_s \|\hat{r}_s - r^*_s \|_1 \leq \varepsilon,
\end{equation*}
by construction.
This concludes the proof.
\end{proof}

\subsection{Proofs from Section \ref{sec:consistency}}
\label{sec:proofs_consistency}

Our proof of Theorem \ref{thm:consistency} relies on a well-known result regarding the stability of certain optimization problems.
Before stating this result, fix spaces $\calZ$ and $\calU$ corresponding to the set of possible solutions and set of parameters for the optimization problem of interest, respectively.
Now consider the following problem.
\begin{align}
\begin{split}
\mbox{minimize} \quad & f(z, u) \\
\mbox{subject to} \quad & z \in \Phi(u).
\end{split}
\label{eq:generic_optprob}
\end{align}
\noindent Note that $f(\cdot, u): \calZ \rightarrow \mathbb{R}$ describes the objective to be minimized and $\Phi(u) \subset \calZ$ represents the feasible set of Problem \eqref{eq:generic_optprob}, both indexed by a parameter $u \in \calU$.
We will call a set $\calV \subset \calZ$ a neighborhood of a subset $\mathcal{W} \subset \calZ$ if $\mathcal{W} \subset \setint \calV$.
Neighborhoods in $\calU$ will be defined similarly.
Recall that a multifunction $F: \calU \rightarrow 2^\calZ$ is upper semicontinuous at a point $u_0 \in \calU$ if for any neighborhood $\calV_\calZ$ of the set $F(u_0)$, there exists a neighborhood $\calV_\calU$ of $u_0$ such that for every $u \in \calV_\calU$, $F(u) \subset \calV_\calZ$.

\begin{thm}[\cite{bonnans2013perturbation}, Proposition 4.4]
\label{thm:perturbopt}
Let $u_0$ be a given point in the parameter space $\calU$.
Suppose that (i) the function $f(z, u)$ is continuous on $\calZ \times \calU$, (ii) the graph of the multifunction $\Phi(\cdot)$ is a closed subset of $\calU \times \calZ$, (iii) there exists $\alpha \in \mathbb{R}$ and a compact set $C \subset \calZ$ such that for every $u$ in a neighborhood of $u_0$, the level set $\{z \in \Phi(u): f(z, u) \leq \alpha\}$ is nonempty and contained in $C$, (iv) for any neighborhood $\mathcal{V}_{\calZ}$ of the set $\argmin_{z \in \Phi(u_0)} f(z, u_0)$ there exists a neighborhood $\mathcal{V}_U$ of $u_0$ such that $\mathcal{V}_{\calZ} \cap \Phi(u) \neq \emptyset$ for all $u \in \mathcal{V}_\calU$.
Then the optimal value function $u \mapsto \min_{z \in \Phi(u)} f(z, u)$ is continuous at $u = u_0$ and the multifunction $u \mapsto \argmin_{z \in \Phi(u)} f(z, u)$ is upper semicontinuous at $u_0$.
\end{thm}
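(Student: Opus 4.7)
Write $v(u) := \inf_{z \in \Phi(u)} f(z, u)$ and $S(u) := \argmin_{z \in \Phi(u)} f(z, u)$. The plan is, in order: (a) secure that $S(u)$ is nonempty, contained in the compact set $C$ of (iii), and that $S(u_0)$ is compact; (b) use this compactness together with (iv) to prove upper semicontinuity of $v$, then use the closed-graph property (ii) plus compactness to prove lower semicontinuity by a sub-sequence contradiction; (c) repeat the sub-sequence argument, now combined with the continuity of $v$, to prove upper semicontinuity of $S$.

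For step (a), fix $u$ in the neighborhood of $u_0$ supplied by (iii). Because $\graph(\Phi)$ is closed by (ii), each fiber $\Phi(u)$ is closed as the preimage of $\graph(\Phi)$ under $z \mapsto (u,z)$. Combined with continuity of $f(\cdot, u)$ from (i), the sublevel set $L(u) := \{z \in \Phi(u) : f(z, u) \leq \alpha\}$ is closed; by (iii) it is also contained in $C$ and nonempty, hence compact. Therefore $f(\cdot, u)$ attains its minimum over $\Phi(u)$ on $L(u)$, so $S(u) \neq \emptyset$ and $S(u) \subset C$. In particular $v(u_0) \leq \alpha$ and $S(u_0)$ is a closed subset of $C$, hence compact.

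For step (b), fix $\varepsilon > 0$. Joint continuity of $f$ gives, for each $z \in S(u_0)$, open sets $U_z \ni z$ and $W_z \ni u_0$ with $|f(z', u') - f(z, u_0)| < \varepsilon$ on $U_z \times W_z$. Compactness of $S(u_0)$ yields a finite subcover $\{U_{z_i}\}$; set $\calV_\calZ = \bigcup_i U_{z_i}$ (an open neighborhood of $S(u_0)$) and $W_0 = \bigcap_i W_{z_i}$. Condition (iv) then supplies a neighborhood $\calV_\calU$ of $u_0$ with $\calV_\calZ \cap \Phi(u) \neq \emptyset$ for all $u \in \calV_\calU$. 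For $u \in \calV_\calU \cap W_0$ and any witness $z' \in \calV_\calZ \cap \Phi(u)$, one has $z' \in U_{z_i}$ for some $i$ with $z_i \in S(u_0)$, so $v(u) \leq f(z', u) \leq v(u_0) + \varepsilon$; this is upper semicontinuity of $v$. For lower semicontinuity, suppose $u_n \to u_0$ and $\liminf_n v(u_n) < v(u_0)$. For $n$ large, step (a) gives $z_n \in S(u_n) \subset C$; pass to a subsequence $z_{n_k} \to z^*$. The closed-graph condition (ii) forces $z^* \in \Phi(u_0)$, and continuity of $f$ yields $f(z^*, u_0) = \lim_k v(u_{n_k}) < v(u_0)$, contradicting the definition of $v(u_0)$.

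For step (c), let $\calV_\calZ$ be any neighborhood of $S(u_0)$ and suppose, toward a contradiction, that there exist $u_n \to u_0$ and $z_n \in S(u_n) \setminus \setint \calV_\calZ$. By continuity of $v$ from step (b), $v(u_n) \to v(u_0) \leq \alpha$, so eventually $z_n \in L(u_n) \subset C$; extract $z_{n_k} \to z^*$, necessarily in the closed set $(\setint \calV_\calZ)^c$. Condition (ii) gives $z^* \in \Phi(u_0)$, and continuity of $f$ with continuity of $v$ yields $f(z^*, u_0) = v(u_0)$, so $z^* \in S(u_0) \subset \setint \calV_\calZ$, a contradiction. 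The main obstacle is step (b)'s upper semicontinuity: condition (iv) only gives approximate feasibility near the \emph{entire} set $S(u_0)$, not near an individual minimizer, so one cannot simply perturb a single $z_0 \in S(u_0)$. The compactness of $S(u_0)$ secured in step (a), together with joint continuity of $f$, is exactly what powers the finite-cover construction producing a neighborhood of $S(u_0)$ on which $f$ is uniformly close to $v(u_0)$; once $v$ is known to be continuous, the remaining arguments are standard compactness-plus-closed-graph sub-sequence extractions.
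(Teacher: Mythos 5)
The paper does not prove this statement: it is imported verbatim as Proposition 4.4 of Bonnans and Shapiro's \emph{Perturbation Analysis of Optimization Problems} and used as a black box in the proof of Theorem \ref{thm:consistency}, so there is no in-paper argument to compare yours against. Judged on its own, your proof is correct and follows the standard route for this kind of stability result: (a) closedness of the fibers from the closed graph plus the compact, nonempty level sets of (iii) gives attainment and $S(u)\subset C$; (b) the finite-cover construction over the compact set $S(u_0)$ is exactly the right way to exploit (iv), since, as you note, (iv) only provides near-feasibility relative to the whole argmin set, and the subsequence-plus-closed-graph argument gives lower semicontinuity of $v$; (c) the same extraction gives upper semicontinuity of $S$. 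The only presentational nit is in the lower-semicontinuity step: you should first pass to a subsequence of $(u_n)$ along which $v(u_{n_k})$ converges to $\liminf_n v(u_n)$ and only then extract a convergent subsequence of the minimizers $z_{n_k}\in C$; as written, the identity $f(z^*,u_0)=\lim_k v(u_{n_k})<v(u_0)$ presumes the values converge along the chosen subsequence. This is a one-line fix and does not affect the validity of the argument, which is sound in the metric-space setting the paper works in (here $\calZ$ and $\calU$ are compact subsets of Euclidean spaces, so sequential compactness and sequential closedness are available).
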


Both Problems \eqref{eq:probI} and \eqref{eq:probII} may be recast in the form of Problem  \eqref{eq:generic_optprob}.
Let
\begin{equation*}
\calZ = \left\{(\lambda, R) \in \Delta_{d^2} \times \Delta_{d^2}^{d^2}: R \in \tc \mbox{ for some } P, Q \in \Delta_d^d, \lambda R = \lambda\right\}
\end{equation*}
and $\calU = \Delta_d^d \times \Delta_d^d$ be the set of all valid pairs of transition matrices in $\mathbb{R}^{d \times d}$.
It is straightforward to verify that $\calZ$ and $\calU$ are in fact compact subsets of $\mathbb{R}^{d^2} \times \mathbb{R}^{d^2 \times d^2}$ and $\mathbb{R}^{d\times d} \times \mathbb{R}^{d\times d}$, respectively.
The objective function $f(\cdot)$ is identified with the map $(\lambda, R) \mapsto \langle c, \lambda\rangle$ and does not depend on the parameter $u = (P, Q)$.
We will refer to the constraint functions for Problems \eqref{eq:probI} and \eqref{eq:probII} by $\Phi: \calU \rightarrow 2^\calZ$ and $\Phi_\eta: \calU \rightarrow 2^\calZ$, and their optimal solution functions by $\Phi^*: \calU \rightarrow 2^\calZ$ and $\Phi_\eta^*: \calU \rightarrow 2^\calZ$, respectively.

\consistency*
\begin{proof}
We will prove the result for Problem \eqref{eq:otc_problem_rv} as the proof for Problem \eqref{eq:entropic_otc} is similar.
As the two problems are equivalent, it suffices to check the conditions of Theorem \ref{thm:perturbopt} for Problem \eqref{eq:probI} at the point $u_0 = (P, Q) \in \calU$.
First, (i) is vacuously true since the objective $f(\cdot)$ does not depend on $u$.
Next, we will show that the graph of $\Phi(\cdot)$ is a closed subset of $\calU \times \calZ$.
Fix a sequence $\{(P_n, Q_n, \lambda_n, R_n)\}_{n \geq 1} \subset \graph \Phi(\cdot)$.
As a subset of the compact set $\Delta_d^d \times \Delta_d^d \times \Delta_{d^2} \times \Delta_{d^2}^{d^2}$, it has a subsequence, which we also label as $\{(P_n, Q_n, \lambda_n, R_n)\}_{n \geq 1}$ converging to some $(P', Q', \lambda', R') \in \Delta_d^d \times \Delta_d^d \times \Delta_{d^2} \times \Delta_{d^2}^{d^2}$.
Taking limits of the linear equations $R_n \in \Pi_{\mbox{\tiny TC}}(P_n, Q_n)$ and $\lambda_n R_n = \lambda_n$, we conclude that $R' \in \Pi_{\mbox{\tiny TC}}(P', Q')$ and $\lambda' R' = \lambda'$.
Thus $(P', Q', \lambda', R') \in \graph \Phi(\cdot)$ and (ii) holds.
To show that (iii) is satisfied, note that one may let $\alpha = \|c\|_\infty$ and use the fact that the entire set $\calZ$ is compact.
Finally, we will show that (iv) is satisfied.
Let $\calV_\calZ \subset \calZ$ be a neighborhood of $\argmin_{z \in \Phi(u_0)} f(z, u_0)$.
Then define the neighborhood $\calV_\calU$ of $u_0 = (P, Q)$ as
\begin{equation*}
\calV_\calU := \{(P, Q) \in \Delta_d^d \times \Delta_d^d: R \in \tc \mbox{ for some } (\lambda, R) \in \calV_\calZ\}.
\end{equation*}
Note that $\calV_\calU$ is nonempty by the non-emptiness of $\calV_\calZ$ and the definition of $\calZ$.
Moreover, $\calV_\calZ \cap \Phi(u) \neq \emptyset$ for all $u \in \calV_\calU$ by construction.
Thus all the conditions of Theorem \ref{thm:perturbopt} are satisfied and the desired convergence holds.
\end{proof}

\section*{Acknowledgements}
The authors would like to thank Quoc Tran-Dinh for helpful discussions and Jason Altschuler for contributions to the proof of Lemma \ref{lemma:bound_on_sinkhorn_error}.
K.O. and A.N were supported in part by NIH Grant R01 HG009125-01 and NSF Grant DMS-1613072.  
K.M. was supported in part by NSF Grant DMS-1847144.
K.M. and A.N. were supported in part by NSF Grant DMS-1613261.

\bibliography{references}

\clearpage
\appendix
\section{Properties of the OTC Problems}\label{app:existence}
In this appendix, we prove that solutions to the OTC and constrained OTC problems exist via continuity and compactness arguments and establish the triangle inequality for the unconstrained problem.
For a metric space $\calU$ and a sequence of Borel probability measures $\{\mu^n\} \subset \calM(\calU)$, we say that $\mu^n$ \emph{converges weakly to} $\mu \in \calM(\calU)$, denoted by $\mu^n  \Rightarrow \mu$, if for every continuous and bounded function $f: \calU \rightarrow \bbR$, $\int f \, d\mu^n \rightarrow \int f \, d\mu$.
A set $\Pi \subset \calM(\calU)$ is said to be \emph{weakly compact} if every sequence in $\Pi$ contains a subsequence converging weakly to an element of $\Pi$.
$\Pi$ is said to be \emph{tight} if for every $\varepsilon > 0$, there exists a compact set $K \subset \calU$ such that $\mu(K) > 1 - \varepsilon$ for every $\mu \in \Pi$.
Tightness and relative compactness are related by Prohorov's theorem which states that if $\calU$ is a separable metric space, $\Pi \subset \calM(\calU)$ is tight if and only if its closure is relatively compact.
Note that $\X^\bbN\times\Y^\bbN$ is complete and separable when equipped with the metric 
\begin{equation*}
d((\bfx^1, \bfy^1), (\bfx^2, \bfy^2)) = \sum\limits_{k=0}^\infty 2^{-k} \delta((x^1_k, y^1_k)\neq (x^2_k, y^2_k)).
\end{equation*}
Finally, we remark that since $c: \X\times\Y \rightarrow \bbR_+$ is continuous and bounded, $\tilde{c}(\bfx, \bfy) = c(x_0, y_0)$ is as well.

\subsection{Existence for the OTC Problem}
We begin by proving that $\tcrv$ is weakly compact.
\begin{lem}\label{lemma:weak_compactness_of_tc}
$\pitc(\bbP, \bbQ)$ is weakly compact.
\end{lem}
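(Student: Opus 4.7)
The plan is to show tightness and weak closedness of $\pitc(\bbP, \bbQ)$. Since $\pitc(\bbP, \bbQ) \subset \Pi(\bbP, \bbQ)$ and any set of couplings between two fixed Borel probability measures on a Polish space is tight (the marginals $\{\bbP\}$ and $\{\bbQ\}$ are trivially tight, and tightness of couplings follows from tightness of marginals), Prohorov's theorem yields that the weak closure of $\pitc(\bbP, \bbQ)$ is weakly compact. Hence it suffices to show that $\pitc(\bbP, \bbQ)$ is weakly closed.

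To that end, I would take a sequence $\pi^n \in \pitc(\bbP, \bbQ)$ with $\pi^n \Rightarrow \pi$ and show $\pi \in \pitc(\bbP, \bbQ)$. By definition, each $\pi^n$ is determined by a pair $(R^n, r^n)$ where $R^n \in \tc$ is the transition matrix and $r^n \in \Delta_{d^2}$ is a stationary distribution satisfying $r^n R^n = r^n$. Both $\tc \subset [0,1]^{d^2 \times d^2}$ and $\Delta_{d^2}$ are compact, so by passing to a subsequence I may assume $R^n \to R$ and $r^n \to r$. The defining constraints of $\tc$ are finitely many linear (in)equalities, and $r^n R^n = r^n$ is linear in $(r^n, R^n)$, so these constraints pass to the limit, giving $R \in \tc$ and $rR = r$.

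The key step is to identify $\pi$ with the stationary Markov measure determined by $(R, r)$. Since $\X \times \Y$ is finite, every cylinder set in $(\X\times\Y)^\bbN$ under the product topology is both open and closed, hence a continuity set for every Borel probability measure. By the portmanteau theorem, $\pi^n([a_0^k]) \to \pi([a_0^k])$ for every cylinder $[a_0^k]$. On the other hand,
\begin{equation*}
\pi^n([a_0^k]) = r^n(a_0) \prod_{i=0}^{k-1} R^n(a_i, a_{i+1}) \longrightarrow r(a_0) \prod_{i=0}^{k-1} R(a_i, a_{i+1}),
\end{equation*}
so $\pi$ agrees with the stationary Markov measure associated with $(R, r)$ on all cylinders, hence on all Borel sets. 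Finally, since the set $\Pi(\bbP, \bbQ)$ of couplings is weakly closed (the defining marginal conditions involve continuous bounded test functions on a Polish space), $\pi \in \Pi(\bbP, \bbQ)$. Combined with the Markov structure just established and $R \in \tc$, Definition \ref{def:transitioncouplings} gives $\pi \in \pitc(\bbP, \bbQ)$, as desired.

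The argument is mostly routine; the only point demanding care is the identification of the limit $\pi$ with a Markov measure of the required form, which I would handle via the clopenness of cylinder sets over a finite alphabet together with the explicit factorization formula for $\pi^n$ on cylinders. No step requires genuine probabilistic subtlety beyond tightness of marginals and a single appeal to portmanteau.
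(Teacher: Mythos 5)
Your proof is correct, and it shares the paper's skeleton (tightness of $\Pi(\bbP,\bbQ)$ plus Prohorov's theorem, reducing the problem to weak closedness of $\pitc(\bbP,\bbQ)$), but the closedness step is organized differently. The paper works directly with the weak limit $\pi$: it verifies stationarity via the continuous mapping theorem applied to the shift, verifies the Markov property by passing the ratio identity $\pi^n([s_0\cdots s_k])/\pi^n([s_0\cdots s_{k-1}]) = \pi^n([s_{k-1}s_k])/\pi^n([s_{k-1}])$ to the limit, and then checks the marginal constraints on the limiting transition matrix row by row. You instead exploit the finite-dimensional parametrization up front: compactness of $\tc\times\Delta_{d^2}$ gives a subsequential limit $(R,r)$ of the pairs $(R^n,r^n)$, the linear constraints $R\in\tc$ and $rR=r$ pass to the limit, and the clopenness of cylinders over a finite alphabet identifies $\pi$ with the stationary Markov measure generated by $(R,r)$, so stationarity and Markovity come for free. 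Your route is slightly cleaner in this finite-state setting (one identification replaces three separate verifications); the paper's route is more intrinsic to the measure and would adapt more readily to settings without a compact finite-dimensional parameter space. One small point worth making explicit in your write-up: the pair $(R^n,r^n)$ need not be uniquely determined by $\pi^n$ when $r^n$ has zero entries, but this is harmless since you only need existence of some admissible pair generating $\pi^n$, and the subsequential limit $\pi$ is unchanged by passing to a subsequence.
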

\begin{proof}
By \cite[Lemma 4.4]{villani2008optimal}, $\Pi(\bbP, \bbQ)$ is tight.
Since $\tcrv \subset \Pi(\bbP, \bbQ)$, $\tcrv$ is tight as well.
Thus by Prohorov's theorem, the closure of $\tcrv$ is weakly compact.
So we need only prove that $\tcrv$ is closed.
Take a sequence $\{\pi^n\} \subset \pitc(\bbP, \bbQ)$ such that $\pi^n \Rightarrow \pi \in \calM(\X^\bbN\times\Y^\bbN)$.
Since $\Pi(\bbP, \bbQ)$ is weakly compact \citep{villani2008optimal}, $\pi \in \Pi(\bbP, \bbQ)$.
Then it suffices to prove that $\pi$ is stationary, Markov, and has a transition matrix that satisfies the transition coupling property.

We begin by proving that $\pi$ is stationary.
Let $\sigma: \X^\bbN\times\Y^\bbN \rightarrow \X^\bbN\times\Y^\bbN$ be the left-shift map defined for every $(\bfx, \bfy) \in \X^\bbN\times\Y^\bbN$ by $\sigma(\bfx, \bfy) = (x_1^\infty, y_1^\infty)$.
Then stationarity of any $\mu \in \calM(\X^\bbN\times\Y^\bbN)$ is defined by $\mu = \mu \circ \sigma^{-1}$.
Since each $\pi^n$ is stationary, $\pi^n = \pi^n\circ \sigma^{-1}$.
Noting that $\sigma$ is continuous, the continuous mapping theorem implies that $\pi^n \circ \sigma^{-1} \Rightarrow \pi \circ \sigma^{-1}$, so $\pi^n \Rightarrow \pi \circ \sigma^{-1}$.
Since weak limits are unique, we conclude that $\pi = \pi \circ \sigma^{-1}$ and $\pi$ is stationary.

Next we prove that $\pi$ is Markov.
Since $\X\times\Y$ is finite, for any cylinder set $[s_0^k] = \{(\bfx, \bfy) \in (\X \times \Y)^\bbN: (x_j, y_j) = s_j, 0 \leq j \leq k\}$, $\pi^n([s_0^k]) \rightarrow \pi([s_0^k])$.
Then
\begin{equation}\label{eq:convergence_of_transitions1}
\frac{\pi^n([s_0\cdots s_k])}{\pi^n([s_0\cdots s_{k-1}])} \rightarrow \frac{\pi([s_0 \cdots s_k])}{\pi([s_0\cdots s_{k-1}])} 
\end{equation}
and
\begin{equation}\label{eq:convergence_of_transitions2}
\frac{\pi^n([s_{k-1} s_k])}{\pi^n([s_{k-1}])} \rightarrow \frac{\pi([s_{k-1} s_k])}{\pi([s_{k-1}])},
\end{equation}
where we let $\nicefrac{0}{0} = 0$.
But since $\pi^n$ is Markov for each $n \geq 1$,
\begin{equation*}
\frac{\pi^n([s_0 \cdots s_k])}{\pi^n([s_0 \cdots s_{k-1}])} = \frac{\pi^n([s_{k-1} s_k])}{\pi^n([s_{k-1}])}.
\end{equation*}
As a result, $\pi([s_0 \cdots s_k])/\pi([s_0 \cdots s_{k-1}]) = \pi([s_{k-1} s_k]) / \pi([s_{k-1}])$.
Thus, $\pi$ is Markov.

Now, we need only show that $\pi$ satisfies the transition coupling property.
Letting $R_n$ and $R$ denote the transition matrices of $\pi^n$ and $\pi$, respectively,  \eqref{eq:convergence_of_transitions1} and \eqref{eq:convergence_of_transitions2} imply that $R_n(s, s') \rightarrow R(s, s')$ for every $s,s' \in \X\times\Y$.
Then for any $(x,y) \in \X\times\Y$ and $y' \in \Y$,
\begin{equation}\label{eq:convergence_of_trans_mat}
\sum\limits_{x'} R_n((x,y), (x',y')) \rightarrow \sum\limits_{x'} R((x,y),(x',y')).
\end{equation}
But as $R_n \in \tc$, $\sum_{x'} R_n((x,y),(x',y')) = Q(y,y')$ and it follows that $\sum_{x'} R((x,y),$ $(x',y')) = Q(y,y')$.
Employing a similar argument to the other marginal of $R$, one may show that in fact $R \in \tc$.
Therefore, $\pi \in \pitc(\bbP, \bbQ)$ and we conclude that $\pitc(\bbP, \bbQ)$ is weakly compact.
\end{proof}

\begin{prop}\label{prop:existence}
The OTC problem \eqref{eq:otc_problem_rv} has a solution.
\end{prop}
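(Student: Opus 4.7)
The plan is to apply the direct method: exhibit a nonempty feasible set, show it is weakly compact, and show that the objective is continuous with respect to weak convergence, so that the infimum is attained.

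First I would note that $\tcrv$ is nonempty, since it contains the independent coupling $\bbP \otimes \bbQ$ corresponding to the transition matrix $P \otimes Q \in \tc$ (constructed in Section~\ref{sec:background_on_otc}). By Lemma~\ref{lemma:weak_compactness_of_tc}, $\tcrv$ is weakly compact as a subset of $\calM(\X^\bbN \times \Y^\bbN)$.

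Next I would verify that the objective $\pi \mapsto \int \tilde{c}\, d\pi$ is continuous in the weak topology. Since $c$ is a single-letter cost, $\tilde{c}(\bfx,\bfy) = c(x_0,y_0)$, and because $\X \times \Y$ is finite (hence discrete), the function $\tilde{c}$ is continuous on $\X^\bbN \times \Y^\bbN$. It is also bounded, as $\|c\|_\infty < \infty$. Continuity of the map $\pi \mapsto \int \tilde{c}\, d\pi$ on $\calM(\X^\bbN \times \Y^\bbN)$ with respect to weak convergence then follows by the very definition of weak convergence.

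Finally, a continuous real-valued function on a nonempty weakly compact set attains its infimum: take a minimizing sequence $\{\pi^n\} \subset \tcrv$ with $\int \tilde{c}\, d\pi^n \to \inf_{\pi \in \tcrv} \int \tilde{c}\, d\pi$; by weak compactness, pass to a subsequence with $\pi^n \Rightarrow \pi^* \in \tcrv$; by continuity of the objective, $\int \tilde{c}\, d\pi^* = \inf_{\pi \in \tcrv} \int \tilde{c}\, d\pi$, so $\pi^*$ is a solution. No step is particularly delicate here; all the real work has been done in Lemma~\ref{lemma:weak_compactness_of_tc}, and the argument is a routine application of the extreme value principle once that lemma and the continuity of $\tilde{c}$ are in hand.
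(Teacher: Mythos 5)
Your argument is correct and is essentially identical to the paper's proof: take a minimizing sequence in $\tcrv$, extract a weakly convergent subsequence using Lemma \ref{lemma:weak_compactness_of_tc}, and pass to the limit in the objective using the fact that $\tilde{c}$ is continuous and bounded. The only addition is your explicit remark that $\tcrv$ is nonempty (via the independent coupling), which the paper leaves implicit but which is a harmless and correct observation.
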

\begin{proof}
Let $\{\pi^n\} \subset \pitc(\bbP, \bbQ)$ be a sequence such that 
\begin{equation*}
\int \tilde{c} \, d\pi^n \rightarrow \inf\limits_{\pi \in \pitc(\bbP, \bbQ)} \int \tilde{c} \, d\pi.
\end{equation*}
By Lemma \ref{lemma:weak_compactness_of_tc}, $\pitc(\bbP, \bbQ)$ is weakly compact.
Thus, there exists a subsequence $\{\pi^{n_k}\}$ such that $\pi^{n_k} \Rightarrow \pi^*$ for some $\pi^* \in \pitc(\bbP, \bbQ)$.
Since $\tilde{c}$ is continuous and bounded,
\begin{equation*}
\int \tilde{c} \, d\pi^* = \lim\limits_{k\rightarrow\infty} \int \tilde{c} \, d\pi^{n_k} = \inf\limits_{\pi \in \pitc(\bbP, \bbQ)}\int \tilde{c} \, d\pi.
\end{equation*}
Thus $\pi^*$ is an optimal solution for Problem \eqref{eq:otc_problem_rv}.
\end{proof}

\subsection{Existence for the Constrained OTC Problem}\label{app:approx_existence}
We begin by proving that $\tceta$ is convex and compact as a subset of $\bbR^{d^2 \times d^2}$.
\begin{lem}\label{lemma:compactness_of_tceta}
For any $\eta > 0$, the constrained set of transition coupling matrices $\tceta$ is convex and compact.
\end{lem}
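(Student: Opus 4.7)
The plan is to prove both properties by reducing to well-known facts about the row-wise constraint set $\tcetarow$ and then invoking closure under finite Cartesian products. Since
\[
\tceta \;=\; \bigotimes_{(x,y)\in\X\times\Y} \tcetarow,
\]
it suffices to show that each $\tcetarow$ is convex and compact in $\bbR^{d^2}$.

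For convexity, I would observe that $\tcrow$ is defined by a finite system of linear equality (marginal) and inequality (nonnegativity) constraints, hence is a (convex) polytope. The map $r \mapsto \calK(r \,\|\, P\otimes Q((x,y),\cdot))$ is a convex function of $r$ by standard properties of KL divergence (joint convexity in its two arguments, and hence convexity in the first with the second fixed). Therefore the sublevel set $\{r : \calK(r \,\|\, P\otimes Q((x,y),\cdot)) \leq \eta\}$ is convex, and $\tcetarow$ is the intersection of two convex sets. Convexity of $\tceta$ then follows from convexity of Cartesian products.

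For compactness, $\tcrow$ is a closed and bounded subset of the simplex $\Delta_{d^2}$, hence compact. The KL divergence $\calK(\cdot \,\|\, P\otimes Q((x,y),\cdot))$ is lower semicontinuous on $\Delta_{d^2}$ (one needs a brief check that the convention $0\log(0/0)=0$, together with the fact that $r\in\tcrow$ forces $r(x',y')=0$ whenever $P(x,x')Q(y,y')=0$, keeps the divergence finite and behaved), so its sublevel set is closed. Hence $\tcetarow$ is a closed subset of the compact set $\tcrow$, thus compact. Compactness of $\tceta$ then follows by Tychonoff's theorem (trivially, since we have only finitely many factors).

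The only genuinely delicate point is the handling of zeros in $P\otimes Q((x,y),\cdot)$ when verifying lower semicontinuity of the KL term; everything else is a routine invocation of standard properties. I would address this by explicitly noting that the marginal constraints defining $\tcrow$ force the support of any $r\in\tcrow$ to lie inside the support of $P\otimes Q((x,y),\cdot)$, so the KL divergence restricted to $\tcrow$ is a finite, lower semicontinuous, convex function, which is all that is needed.
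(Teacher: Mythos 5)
Your proof is correct and follows essentially the same route as the paper's: convexity comes from convexity of the KL divergence in its first argument, and compactness from closedness of the KL sublevel constraint inside the compact set $\tc$, with the zero/support issue resolved by the marginal constraints exactly as in the paper. The only cosmetic differences are that the paper argues directly at the level of whole matrices rather than factorizing $\tceta$ row-wise, and it establishes closedness by noting absolute continuity of the limit rows and continuity of the KL term there, whereas you invoke lower semicontinuity and closedness of sublevel sets; both are equally valid.
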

\begin{proof}
Fixing $\eta > 0$, we begin by showing that $\tceta$ is convex.
Let $R, R' \in \tceta$, $\lambda \in (0,1)$, and define $R_\lambda := \lambda R + (1 - \lambda) R'$.
Since $\tc$ is convex, $R_\lambda \in \tc$.
Moreover, using the convexity of the KL-divergence, for any $s \in \X\times \Y$,
\begin{align*}
\calK(R_\lambda(s,\cdot) \| P\otimes Q(s,\cdot)) &=\calK(\lambda R(s, \cdot) + (1-\lambda) R'(s,\cdot) \| P\otimes Q(s,\cdot)) \\
&\leq \lambda \calK(R(s,\cdot) \| P\otimes Q (s, \cdot)) +(1 - \lambda) \calK(R'(s,\cdot) \| P\otimes Q(s, \cdot)) \\
&\leq \lambda \eta + (1 - \lambda) \eta \\
&= \eta.
\end{align*}
Thus $R_\lambda \in \tceta$ and we conclude that $\tceta$ is convex.

Next we prove compactness.
Note that as a subset of the compact set $\tc$ we need only show that $\tceta$ is closed.
Let $\{R_n\} \subset \tceta$ be a sequence converging to $R \in \bbR^{d^2 \times d^2}$.
By the compactness of $\tc$, $R \in \tc$.
Now for any $s \in \X\times\Y$, note that $R(s,\cdot)$ is absolutely continuous with respect to $P\otimes Q(s,\cdot)$.
This implies that, for every $s' \in \X \times \Y$,
\begin{equation*}
R(s,s') \log\frac{R(s,s')}{P\otimes Q(s,s')} < \infty,
\end{equation*}
where we let $0\log(0/0) = 0$.
Then $\calK(\cdot \| P\otimes Q(s,\cdot))$ is continuous at $R(s,\cdot)$ and we have that 
\begin{equation*}
\calK(R(s,\cdot) \| P\otimes Q(s,\cdot)) = \lim\limits_{n\rightarrow\infty} \calK(R_n(s,\cdot) \| P\otimes Q(s,\cdot)) \leq \eta.
\end{equation*}
Thus $R \in \tceta$ and we conclude that $\tceta$ is compact.
\end{proof}

Next, we show that $\tcetarv$ is weakly compact.

\begin{lem}\label{lemma:compactness_of_entropic_tc}
For any $\eta \geq 0$, $\tcetarv$ is weakly compact.
\end{lem}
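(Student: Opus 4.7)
The plan is to follow the same outline as the proof of Lemma \ref{lemma:weak_compactness_of_tc}, with the added observation that the KL-constraint defining $\tceta$ is closed under the entry-wise convergence of transition matrices that emerges from weak convergence of the associated process measures. Since Lemma \ref{lemma:compactness_of_tceta} already supplies that $\tceta$ is a compact (and convex) subset of $\bbR^{d^2 \times d^2}$, essentially all the work has been done; we only need to glue the two facts together.

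Concretely, first I would observe that $\tcetarv \subset \tcrv \subset \Pi(\bbP,\bbQ)$. By \cite[Lemma 4.4]{villani2008optimal}, $\Pi(\bbP,\bbQ)$ is tight, hence so is $\tcetarv$. Prohorov's theorem then reduces the problem to verifying that $\tcetarv$ is closed under weak convergence. So take $\{\pi^n\} \subset \tcetarv$ with $\pi^n \Rightarrow \pi$. By Lemma \ref{lemma:weak_compactness_of_tc}, the limit $\pi$ belongs to $\tcrv$, so it is a stationary Markov measure with some transition matrix $R \in \tc$. Moreover, the argument in equations \eqref{eq:convergence_of_transitions1}--\eqref{eq:convergence_of_trans_mat} (within the proof of Lemma \ref{lemma:weak_compactness_of_tc}) shows that if $R_n$ denotes the transition matrix of $\pi^n$, then $R_n(s,s') \to R(s,s')$ for every $s,s' \in \X\times\Y$, i.e., $R_n \to R$ entry-wise in $\bbR^{d^2\times d^2}$.

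Finally, since $R_n \in \tceta$ for each $n$ by hypothesis and $\tceta$ is closed in $\bbR^{d^2\times d^2}$ by Lemma \ref{lemma:compactness_of_tceta}, we conclude that $R \in \tceta$. Combined with $\pi \in \tcrv$, this gives $\pi \in \tcetarv$, so $\tcetarv$ is weakly closed. Together with tightness, this establishes weak compactness. Existence of a solution to the constrained OTC problem \eqref{eq:entropic_otc} then follows by the same argument as Proposition \ref{prop:existence}, since $\tilde{c}$ is continuous and bounded.

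The only subtle point — and the ``main obstacle,'' though a mild one — is making sure the KL-constraint passes to the entry-wise limit. This was already handled in Lemma \ref{lemma:compactness_of_tceta}, where one uses that $R(s,\cdot)$ inherits absolute continuity with respect to $P\otimes Q(s,\cdot)$ from the sequence, so that $\calK(\cdot \,\|\, P\otimes Q(s,\cdot))$ is continuous at the limit. Thus once entry-wise convergence of the $R_n$ is established via the Markov/stationarity arguments borrowed verbatim from Lemma \ref{lemma:weak_compactness_of_tc}, the closure of $\tceta$ does the rest.
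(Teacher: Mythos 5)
Your proposal is correct and takes essentially the same approach as the paper: reduce to closedness of $\tcetarv$ under weak limits via tightness of $\Pi(\bbP,\bbQ)$, use the entry-wise convergence of the transition matrices established in the proof of Lemma \ref{lemma:weak_compactness_of_tc}, and verify that the KL constraint survives the limit. The only cosmetic difference is that the paper applies lower semicontinuity of $\calK(\cdot \,\|\, P\otimes Q(s,\cdot))$ directly at this step, while you delegate it to the closedness of $\tceta$ proved in Lemma \ref{lemma:compactness_of_tceta}; the two routes are interchangeable.
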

\begin{proof}
Let $\{\pi_n\} \subset \pitc^\eta(\bbP, \bbQ)$ be a sequence such that $\pi_n \Rightarrow \pi \in \calM(\X^\bbN\times\Y^\bbN)$.
By Lemma \ref{lemma:compactness_of_tceta}, $\tcrv$ is weakly compact so $\pi \in \tcrv$.
Letting $R$ be the transition matrix of $\pi$, we need only show that $R \in \tceta$.
Letting $R_n$ be the transition matrix of $\pi_n$, it follows from \eqref{eq:convergence_of_trans_mat} that $R_n \rightarrow R$.
Using the weak lower semicontinuity of the KL-divergence, for every $s \in \X\times\Y$,
\begin{equation*}
\calK(R(s,\cdot) \| P \otimes Q(s,\cdot)) \leq \liminf\limits_{n\rightarrow\infty} \calK(R_n(s,\cdot) \|P\otimes Q(s,\cdot)) \leq \eta.
\end{equation*}
Therefore, $R \in \Pi_\eta(P,Q)$ and we find that $\pi \in \pitc^\eta(\bbP, \bbQ)$.
Thus, we conclude that $\pitc^\eta(\bbP, \bbQ)$ is weakly compact.
\end{proof}

\begin{prop}\label{prop:entropic_existence}
For any $\eta > 0$, the constrained OTC problem \eqref{eq:entropic_otc} has a solution.
\end{prop}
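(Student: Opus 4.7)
The plan is to argue in essentially the same way as in the proof of Proposition \ref{prop:existence}, using Lemma \ref{lemma:compactness_of_entropic_tc} in place of Lemma \ref{lemma:weak_compactness_of_tc}. The two ingredients needed are (i) the feasible set $\tcetarv$ is nonempty and weakly compact, and (ii) the objective $\pi \mapsto \int \tilde{c}\, d\pi$ is continuous with respect to weak convergence. Neither requires new work at this point.

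First I would observe that $\tcetarv$ is nonempty: the independent transition coupling $P \otimes Q$ satisfies $\calK(P\otimes Q((x,y),\cdot) \,\|\, P\otimes Q((x,y),\cdot)) = 0 \leq \eta$ for every $(x,y)$, so $P \otimes Q \in \tceta$, and hence the induced stationary Markov measure lies in $\tcetarv$. Next I would take a minimizing sequence $\{\pi_n\} \subset \tcetarv$ with
\begin{equation*}
\int \tilde{c}\, d\pi_n \;\longrightarrow\; \inf_{\pi \in \tcetarv} \int \tilde{c}\, d\pi.
\end{equation*}

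By Lemma \ref{lemma:compactness_of_entropic_tc}, $\tcetarv$ is weakly compact, so there exists a subsequence $\pi_{n_k} \Rightarrow \pi^*$ for some $\pi^* \in \tcetarv$. Since $c$ is defined on the finite set $\X \times \Y$ it is continuous and bounded, and therefore so is the single-letter cost $\tilde{c}(\bfx,\bfy) = c(x_0,y_0)$ on $\X^\bbN \times \Y^\bbN$. The definition of weak convergence then yields
\begin{equation*}
\int \tilde{c}\, d\pi^* \;=\; \lim_{k \to \infty} \int \tilde{c}\, d\pi_{n_k} \;=\; \inf_{\pi \in \tcetarv} \int \tilde{c}\, d\pi,
\end{equation*}
so $\pi^*$ attains the infimum and is an optimal solution to \eqref{eq:entropic_otc}.

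There is really no main obstacle here; all of the technical work (weak compactness of the constrained feasible set, closedness under the KL constraint via lower semicontinuity, convergence of transition matrices along weakly convergent sequences) has already been absorbed into Lemma \ref{lemma:compactness_of_entropic_tc}. The only thing to be mildly careful about is that the single-letter cost function on sequence space is continuous and bounded, which is immediate from the finiteness of $\X \times \Y$ and the product topology on $\X^\bbN \times \Y^\bbN$.
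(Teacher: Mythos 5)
Your proposal is correct and follows essentially the same argument as the paper: a minimizing sequence in $\tcetarv$, weak compactness from Lemma \ref{lemma:compactness_of_entropic_tc}, and passage to the limit using continuity and boundedness of the single-letter cost $\tilde{c}$. The only (harmless) addition is your explicit remark that $P \otimes Q \in \tceta$ guarantees nonemptiness, which the paper leaves implicit.
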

\begin{proof}
Let $\{\pi^n\} \subset \pitc^\eta(\bbP, \bbQ)$ be a sequence such that 
\begin{equation*}
\int \tilde{c} \, d\pi^n \rightarrow \inf\limits_{\pi \in \pitc^\eta(\bbP, \bbQ)} \int \tilde{c} \, d\pi.
\end{equation*}
By Lemma \ref{lemma:compactness_of_entropic_tc}, $\tcetarv$ is weakly compact.
So there exists a subsequence $\{\pi^{n_k}\}$ such that $\pi^{n_k} \Rightarrow \pi^*$ for some $\pi^* \in \pitc^\eta(\bbP, \bbQ)$.
Since $\tilde{c}$ is continuous and bounded,
\begin{equation*}
\int \tilde{c} \, d\pi^* = \lim\limits_{k\rightarrow\infty} \int \tilde{c} \, d\pi^{n_k} = \inf\limits_{\pi \in \pitc^\eta(\bbP, \bbQ)}\int \tilde{c} \, d\pi.
\end{equation*}
Thus $\pi^*$ is an optimal solution for Problem \eqref{eq:entropic_otc}.
\end{proof}

\subsection{Triangle Inequality}
Next we prove that the optimal transition coupling cost satisfies the triangle inequality when the cost does.
For probability measures $p_1$, $p_2$, $p_3 \in \calM(\X)$, we let $\Pi(p_1, p_2, p_3)$ denote the set of three-way couplings of $p_1$, $p_2$, and $p_3$ defined in the obvious way.
For stationary Markov process measures $\bbP_1$, $\bbP_2$, $\bbP_3 \in \calM(\X^\bbN)$ we let $\Pi_{\mbox{\tiny TC}}(\bbP_1, \bbP_2, \bbP_3)$ denote the set of three-way transition couplings of $\bbP_1$, $\bbP_2$, and $\bbP_3$, again defined in the obvious way.
If the three process measures have transition matrices $P_1$, $P_2$, and $P_3 \in \bbR^{d\times d}$, we let $\Pi(P_1, P_2, P_3)$ denote the set of three-way transition coupling matrices of $P_1$, $P_2$, and $P_3$.

\begin{lem}[Gluing Lemma]\label{lemma:gluing_lemma}
Let $\bbP_1$, $\bbP_2$, $\bbP_3 \in \calM(\X^\bbN)$ be stationary and irreducible Markov chains with stationary distributions $p_1$, $p_2$, $p_3 \in \calM(\X)$, and let $\pi_{12} \in \Pi_{\mbox{\tiny TC}}(\bbP_1, \bbP_2)$ and $\pi_{23} \in \Pi_{\mbox{\tiny TC}}(\bbP_2, \bbP_3)$.
Then there exists $\pi_{123} \in \Pi_{\mbox{\tiny TC}}(\bbP_1, \bbP_2, \bbP_3)$ such that $\pi_{123}(A_1 \times A_2 \times \X^\bbN) = \pi_{12}(A_1 \times A_2)$ and $\pi_{123}(\X^\bbN \times A_2 \times A_3) = \pi_{23}(A_2 \times A_3)$ for any $A_1$, $A_2$, $A_3 \subset \X^\bbN$.
Furthermore, any stationary distribution $\lambda_{123} \in \calM(\X\times\X\times\X)$ of $R_{123}$ necessarily satisfies $\lambda_{123} \in \Pi(p_1, p_2, p_3)$.
\begin{proof}
Let $\bbP_1$, $\bbP_2$, $\bbP_3$, $\pi_{12}$ and $\pi_{23}$ have transition matrices $P_1$, $P_2$, $P_3$, $R_{12}$ and $R_{23}$, respectively.
By the gluing lemma for optimal couplings \citep{villani2008optimal}, for every $x_1$, $x_2$, $x_3 \in \X$, there exists a coupling $r_{(x_1, x_2, x_3)} \in \Pi(P_1(x_1, \cdot), P_2(x_2, \cdot), P_3(x_3, \cdot))$ such that 
\begin{equation*}
\sum_{\tilde{x}_3} r_{(x_1, x_2, x_3)}(\tilde{x}_1, \tilde{x}_2, \tilde{x}_3) = R_{12}((x_1, x_2), (\tilde{x}_1, \tilde{x}_2))
\end{equation*}
and 
\begin{equation*}
\sum_{\tilde{x}_1} r_{(x_1, x_2, x_3)}(\tilde{x}_1, \tilde{x}_2, \tilde{x}_3) = R_{23}((x_2, x_3), (\tilde{x}_2, \tilde{x}_3)).
\end{equation*}
Let $R_{123} \in \bbR^{d^3 \times d^3}$ be the transition matrix such that for every $(x_1, x_2, x_3), (\tilde{x}_1, \tilde{x}_2, \tilde{x}_3) \in \X \times \X \times \X$, $R_{123}((x_1, x_2, x_3), (\tilde{x}_1, \tilde{x}_2, \tilde{x}_3)) = r_{(x_1, x_2, x_3)}(\tilde{x}_1, \tilde{x}_2, \tilde{x}_3)$.
By construction, $R_{123} \in \Pi(P_1, P_2, P_3)$ and we may let $\pi_{123}$ be the stationary Markov process measure constructed from $R_{123}$ and some stationary distribution $\lambda_{123} \in \calM(\X\times\X\times\X)$ of $R_{123}$.
To see that $\lambda_{123} \in \Pi(p_1, p_2, p_3)$, let the first $\X$-marginal of $\lambda_{123}$ be $\tilde{p}_1 \in \calM(\X)$.
Then for every $x_1 \in \X$, 
\begin{align*}
\tilde{p}_1(x_1) &= \sum\limits_{x_2, x_3} \lambda_{123}(x_1, x_2, x_3) \\
&= \sum\limits_{x_2, x_3} \sum\limits_{\tilde{x}_1, \tilde{x}_2, \tilde{x}_3} \lambda_{123}(\tilde{x}_1, \tilde{x}_2, \tilde{x}_3) \times R_{123}((\tilde{x}_1, \tilde{x}_2, \tilde{x}_3), (x_1, x_2, x_3)) \\
&= \sum\limits_{\tilde{x}_1, \tilde{x}_2, \tilde{x}_3} \lambda_{123}(\tilde{x}_1, \tilde{x}_2, \tilde{x}_3) P_1(\tilde{x}_1, x_1) \\
&= \sum\limits_{\tilde{x}_1} \tilde{p}_1(\tilde{x}_1) P_1(\tilde{x}_1, x_1),
\end{align*}
so $\tilde{p}_1$ is stationary with respect to $P_1$.
Since $\bbP_1$ is irreducible, the stationary distribution of $P_1$ is unique and it follows that $\tilde{p}_1 = p_1$.
Repeating the argument for the second and third marginals, it follows that $\lambda_{123} \in \Pi(p_1, p_2, p_3)$ and thus $\pi_{123} \in \Pi_{\mbox{\tiny TC}}(\bbP_1, \bbP_2, \bbP_3)$.
\end{proof}
\end{lem}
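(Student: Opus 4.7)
The plan is to construct the desired three-way transition coupling by applying the classical (one-step) gluing lemma row-by-row to the transition matrices and then lifting to the process level via a stationary Markov chain construction.

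First, I would fix notation by letting $P_1, P_2, P_3$, $R_{12}$, and $R_{23}$ denote the transition matrices of the three marginal chains and the two given transition couplings, respectively. For each triple $(x_1, x_2, x_3) \in \X^3$, note that $R_{12}((x_1,x_2), \cdot)$ is a coupling of $P_1(x_1, \cdot)$ and $P_2(x_2, \cdot)$, while $R_{23}((x_2,x_3), \cdot)$ is a coupling of $P_2(x_2, \cdot)$ and $P_3(x_3, \cdot)$. Because these two couplings share the common marginal $P_2(x_2, \cdot)$, the classical gluing lemma (see, e.g., Villani) produces a three-way coupling $r_{(x_1,x_2,x_3)} \in \Pi(P_1(x_1,\cdot), P_2(x_2,\cdot), P_3(x_3,\cdot))$ whose $(1,2)$- and $(2,3)$-marginals coincide with $R_{12}((x_1,x_2),\cdot)$ and $R_{23}((x_2,x_3),\cdot)$. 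Collecting these distributions into a row-indexed transition matrix $R_{123}$ on $\X^3$ immediately yields a three-way transition coupling matrix $R_{123} \in \Pi(P_1, P_2, P_3)$.

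Next, I would pick any stationary distribution $\lambda_{123}$ of $R_{123}$ (standard Markov chain theory guarantees existence) and verify that its marginals are $p_1, p_2, p_3$. Writing out the stationarity equation $\lambda_{123} R_{123} = \lambda_{123}$ and summing out two of the three coordinates, the transition coupling property of $R_{123}$ reduces the resulting equation to the stationarity equation for the corresponding $P_i$; irreducibility of the $\bbP_i$ then forces the marginal to equal the unique stationary distribution $p_i$. Thus $\lambda_{123} \in \Pi(p_1, p_2, p_3)$. I then define $\pi_{123}$ to be the stationary Markov process measure on $\X^{\bbN}$ built from $R_{123}$ and $\lambda_{123}$, which by construction lies in $\Pi_{\mbox{\tiny TC}}(\bbP_1, \bbP_2, \bbP_3)$.

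Finally, I would check the marginal identities. Because $\pi_{123}, \pi_{12}, \pi_{23}$ are all stationary and Markov, it suffices to verify agreement on one- and two-step cylinder sets. The one-step agreement follows from the marginal identities for $\lambda_{123}$ established above (together with the fact that $\lambda_{123}$ pairwise-projects onto the stationary distributions of $R_{12}$ and $R_{23}$, again using uniqueness from irreducibility of the pairwise chains $\bbP_1\otimes\bbP_2$ type arguments). The two-step agreement follows directly from the marginal property of $r_{(x_1,x_2,x_3)}$, since summing out the third (resp.\ first) coordinate of $R_{123}$ returns exactly $R_{12}$ (resp.\ $R_{23}$). Extending by Markovity yields agreement on all cylinder sets, hence the claimed marginal equalities.

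The main obstacle I anticipate is the marginal verification for the stationary distribution: ensuring that $\lambda_{123}$ pairwise-projects not only onto correct marginals $p_i$, but onto the specific stationary distributions of $R_{12}$ and $R_{23}$ that induce $\pi_{12}$ and $\pi_{23}$. In general $R_{12}$ can have multiple stationary distributions, so some care is required; however, since pairwise projections of $\lambda_{123}$ must be stationary for $R_{12}$ and $R_{23}$ and have the correct one-dimensional marginals $p_1, p_2$ and $p_2, p_3$, and since the one-dimensional marginals of a stationary Markov chain determine its stationary distribution when the marginal chains are irreducible, this should resolve cleanly.
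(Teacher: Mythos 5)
Your construction is exactly the paper's: glue the rows $R_{12}((x_1,x_2),\cdot)$ and $R_{23}((x_2,x_3),\cdot)$ with the classical gluing lemma to form $R_{123}\in\Pi(P_1,P_2,P_3)$, take a stationary distribution $\lambda_{123}$ of $R_{123}$, and use the stationarity equation together with irreducibility of each $P_i$ to get $\lambda_{123}\in\Pi(p_1,p_2,p_3)$; the paper's proof consists of precisely these steps.

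The gap is in the additional step where you try to verify the full process-marginal identities. You argue that the $(1,2)$- and $(2,3)$-projections of $\lambda_{123}$ must coincide with the particular stationary distributions $\lambda_{12}$ and $\lambda_{23}$ generating $\pi_{12}$ and $\pi_{23}$, because ``the one-dimensional marginals of a stationary Markov chain determine its stationary distribution when the marginal chains are irreducible.'' That assertion is false. The argument of Proposition \ref{prop:transmat_char} shows that \emph{every} stationary distribution of a transition coupling matrix $R_{12}\in\Pi_{\mbox{\tiny TC}}(P_1,P_2)$ has marginals $p_1$ and $p_2$, and a transition coupling of irreducible chains can be reducible with several recurrent classes, hence several stationary distributions (see Appendix \ref{sec:redicible_tc} and the multichain remark in Section \ref{sec:connection_to_mdp}). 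For a concrete failure, let $P_1=P_2$ be the two-state chain that deterministically alternates between $0$ and $1$: the unique transition coupling is deterministic and has two recurrent classes, $\{(0,0),(1,1)\}$ and $\{(0,1),(1,0)\}$, and both associated stationary distributions have marginals $(1/2,1/2)$. So knowing that the $(1,2)$-projection of $\lambda_{123}$ is $R_{12}$-stationary with marginals $p_1,p_2$ does not identify it with $\lambda_{12}$, and the claimed one-step (hence cylinder-set) agreement with $\pi_{12}$ does not follow.

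To be clear about what does and does not work: the transition-level part of your verification is fine, since the glued rows do not depend on the third coordinate, so the $(1,2)$-marginal process of $\pi_{123}$ is Markov with transition matrix $R_{12}$; the only obstruction is matching its initial (stationary) distribution to the specific $\lambda_{12}$. Closing that gap requires either a more careful choice of $\lambda_{123}$ (not an arbitrary stationary distribution of $R_{123}$) or a different gluing at the level of the stationary distributions, and your proposed uniqueness shortcut does not supply it. Note that the paper's own proof stops after establishing $\lambda_{123}\in\Pi(p_1,p_2,p_3)$ and does not verify the pairwise marginal identities either, so the step you attempted is exactly where the real difficulty lies; as written, your resolution of it is incorrect.
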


\begin{figure*}[t]
\begin{equation*}
R = \kbordermatrix{& (0, 0) & (0, 1)& (0,2)&(1,0)&(1,1)&(1,2)&(2,0)&(2,1)&(2,2) \\ (0, 0) & 0 & 0.25 & 0 & 0.25 & 0 & 0 & 0 & 0 & 0.50 \\ (0,1) & 0 & 0 & 0.25 & 0 & 0 & 0.25 & 0.25 & 0.25 & 0 \\ (0,2) & 0 & 0 & 0.25 & 0.25 & 0 & 0 & 0.25 & 0.25 & 0 \\ (1,0) & 0.25 & 0 & 0 & 0 & 0 & 0.25 & 0 & 0.25 & 0.25 \\ (1,1) & 0 & 0 & 0.25 & 0.25 & 0 & 0 & 0 & 0.25 & 0.25 \\ (1,2) & 0 & 0.25 & 0 & 0 & 0 & 0.25 & 0.50 & 0 & 0 \\ (2,0)& 0 & 0.25 & 0 & 0.25 & 0 & 0 & 0 & 0 & 0.50 \\ (2,1) & 0.25 & 0 & 0 & 0 & 0 & 0.25 & 0 & 0.25 & 0.25 \\ (2,2) & 0 & 0.25 & 0 & 0 & 0 & 0.25 & 0.50 & 0 & 0}.
\end{equation*}
\caption{A reducible transition coupling of irreducible transition matrices $P$ and $Q$ defined in \eqref{eq:trans_mat_1} and \eqref{eq:trans_mat_2}, respectively.}
\label{fig:reducible_trans_coup}
\end{figure*}

\begin{prop}[Triangle Inequality]\label{prop:triangle_ineq}
Let $\bbP_1$, $\bbP_2$, $\bbP_3 \in \calM(\X^\bbN)$ be stationary and irreducible Markov chains and let $\tilde{c}(\bfx, \tilde{\bfx}) = c(x_0, \tilde{x}_0)$ for every $\bfx$, $\tilde{\bfx} \in \X^\bbN$.
If $c$ satisfies the triangle inequality, then the OTC problem satisfies
\begin{equation}\label{eq:otc_triangle_ineq}
\begin{split}
\min\limits_{\pi \in \Pi_{\mbox{\tiny TC}}(\bbP_1, \bbP_3)} \int \tilde{c} \, d\pi \leq \min\limits_{\pi \in \Pi_{\mbox{\tiny TC}}(\bbP_1, \bbP_2)} \int \tilde{c} \, d\pi + \min\limits_{\pi \in \Pi_{\mbox{\tiny TC}}(\bbP_2, \bbP_3)} \int \tilde{c} \, d\pi.
\end{split}
\end{equation}
\end{prop}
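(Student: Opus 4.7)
The plan is to use the Gluing Lemma (Lemma \ref{lemma:gluing_lemma}) together with a careful construction that extracts a genuine transition coupling of $\bbP_1$ and $\bbP_3$ from a three-way transition coupling. The complication is that for a three-way stationary Markov chain $(X_1,X_2,X_3)$, the marginal process $(X_1,X_3)$ is typically \emph{not} Markov, so we cannot directly project a three-way transition coupling onto coordinates 1 and 3.

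First, invoke Proposition \ref{prop:existence} to obtain optimal transition couplings $\pi_{12}^* \in \Pi_{\mbox{\tiny TC}}(\bbP_1,\bbP_2)$ and $\pi_{23}^* \in \Pi_{\mbox{\tiny TC}}(\bbP_2,\bbP_3)$, with transition matrices $R_{12}^*$, $R_{23}^*$ and stationary distributions $\lambda_{12}^*$, $\lambda_{23}^*$. Apply the Gluing Lemma to obtain $\pi_{123} \in \Pi_{\mbox{\tiny TC}}(\bbP_1,\bbP_2,\bbP_3)$ whose (1,2)- and (2,3)-marginals equal $\pi_{12}^*$ and $\pi_{23}^*$. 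Let $R_{123}$ be its transition matrix and $\lambda_{123}$ a stationary distribution, which by the lemma lies in $\Pi(p_1,p_2,p_3)$, and denote its (1,3)-marginal by $\lambda_{13}$.

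Next, construct a transition matrix $R_{13}$ on $\X\times\X$ by averaging out the middle coordinate conditionally on the current $(x_1,x_3)$:
\begin{equation*}
R_{13}\bigl((x_1,x_3),(x_1',x_3')\bigr) \,=\, \sum_{x_2} \frac{\lambda_{123}(x_1,x_2,x_3)}{\lambda_{13}(x_1,x_3)}\,\sum_{x_2'} R_{123}\bigl((x_1,x_2,x_3),(x_1',x_2',x_3')\bigr)
\end{equation*}
whenever $\lambda_{13}(x_1,x_3) > 0$, and define $R_{13}((x_1,x_3),\cdot)$ to be an arbitrary element of $\Pi(P_1(x_1,\cdot), P_3(x_3,\cdot))$ otherwise (non-empty since it contains the independent coupling). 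A direct computation, using only that $R_{123} \in \Pi(P_1,P_2,P_3)$, shows that $R_{13}((x_1,x_3),\cdot) \in \Pi(P_1(x_1,\cdot), P_3(x_3,\cdot))$, so $R_{13} \in \Pi(P_1,P_3)$. A second direct computation, using that $\lambda_{123}$ is stationary for $R_{123}$, shows that $\lambda_{13} R_{13} = \lambda_{13}$. Since $\bbP_1$ and $\bbP_3$ are irreducible, Proposition \ref{prop:transmat_char} implies that the stationary Markov chain with transition matrix $R_{13}$ and initial distribution $\lambda_{13}$ induces a process measure $\pi_{13} \in \Pi_{\mbox{\tiny TC}}(\bbP_1,\bbP_3)$.

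Finally, bound the cost. Because $\tilde{c}$ is single-letter and $\lambda_{13}$ is the marginal of $\lambda_{123}$,
\begin{equation*}
\int \tilde{c}\, d\pi_{13} \,=\, \sum_{x_1,x_3} \lambda_{13}(x_1,x_3)\, c(x_1,x_3) \,=\, \sum_{x_1,x_2,x_3} \lambda_{123}(x_1,x_2,x_3)\, c(x_1,x_3).
\end{equation*}
Applying the pointwise triangle inequality $c(x_1,x_3) \leq c(x_1,x_2) + c(x_2,x_3)$ and using that the (1,2)- and (2,3)-marginals of $\lambda_{123}$ equal $\lambda_{12}^*$ and $\lambda_{23}^*$, the right-hand side is bounded by $\int \tilde{c}\, d\pi_{12}^* + \int \tilde{c}\, d\pi_{23}^*$. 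Since $\pi_{13}$ is feasible for the minimization on the left of \eqref{eq:otc_triangle_ineq}, the desired inequality follows.

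The main technical obstacle is verifying the stationarity identity $\lambda_{13} R_{13} = \lambda_{13}$: a naive marginalization of $R_{123}$ over the middle coordinate would not yield a transition matrix that preserves $\lambda_{13}$, which is why we must weight by the conditional distribution $\lambda_{123}(x_1,\cdot,x_3)/\lambda_{13}(x_1,x_3)$. Once this particular construction is in place, stationarity reduces to the stationarity of $\lambda_{123}$ under $R_{123}$, and the rest of the argument is a standard gluing plus triangle-inequality computation.
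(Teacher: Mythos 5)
Your proposal is correct, and it follows the paper's overall strategy (glue the two optimal transition couplings via Lemma \ref{lemma:gluing_lemma}, then apply the pointwise inequality $c(x_1,x_3)\leq c(x_1,x_2)+c(x_2,x_3)$ under the three-way stationary distribution), but it diverges at the one step where the two arguments genuinely differ. The paper simply projects the three-way process measure $\pi_{123}$ onto coordinates $1$ and $3$ and asserts that the resulting $\pi_{13}$ lies in $\Pi_{\mbox{\tiny TC}}(\bbP_1,\bbP_3)$; as you point out, this projection of a three-coordinate Markov chain need not itself be Markov (the $(1,3)$-row marginals of $R_{123}$ generally depend on $x_2$), so membership in $\Pi_{\mbox{\tiny TC}}(\bbP_1,\bbP_3)$, which by definition requires a Markov transition coupling, is not immediate. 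Your fix -- Markovizing the marginal by averaging $R_{123}$ over the middle coordinate with weights $\lambda_{123}(x_1,\cdot,x_3)/\lambda_{13}(x_1,x_3)$, checking directly that the resulting $R_{13}$ has rows in $\Pi(P_1(x_1,\cdot),P_3(x_3,\cdot))$ and that $\lambda_{13}R_{13}=\lambda_{13}$, and then invoking Proposition \ref{prop:transmat_char} via irreducibility -- supplies exactly the missing verification, and the cost bound is unaffected because $\tilde{c}$ is single-letter, so the expected cost depends only on the time-zero distribution $\lambda_{13}$, which your construction preserves. In short, your route is slightly longer but more careful: it buys a rigorous justification of the feasibility of $\pi_{13}$ where the paper's ``clearly'' glosses over a real subtlety, while the remainder (existence of optimizers via Proposition \ref{prop:existence}, the identification of the pairwise marginals of $\lambda_{123}$ with $\lambda_{12}^*$ and $\lambda_{23}^*$ from the gluing lemma, and the final triangle-inequality computation) matches the paper's argument.
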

\begin{proof}
By Proposition \ref{prop:existence}, there exist $\pi_{12} \in \Pi_{\mbox{\tiny TC}}(\bbP_1, \bbP_2)$ and $\pi_{23} \in \Pi_{\mbox{\tiny TC}}(\bbP_2, \bbP_3)$ that are optimal in the two problems on the right hand side of \eqref{eq:otc_triangle_ineq}.
Then by Lemma \ref{lemma:gluing_lemma}, there exists $\pi_{123} \in \Pi_{\mbox{\tiny TC}}(\bbP_1, \bbP_2, \bbP_3)$ that admits $\pi_{12}$ and $\pi_{23}$ as $(\X\times\X)^\bbN$-marginals.
Define the measure $\pi_{13} \in \calM((\X\times\X)^\bbN)$ by $\pi_{13}(A_1 \times A_3) = \pi_{123}(A_1 \times \X^\bbN \times A_3)$ for every $A_1$, $A_3 \subset \X^\bbN$.
Clearly, $\pi_{13} \in \Pi_{\mbox{\tiny TC}}(\bbP_1, \bbP_3)$.
Moreover, $\tilde{c}$ satisfies the triangle inequality on $(\X\times\X)^\bbN$ since $c$ satisfies it on $\X\times\X$.
Thus,
\begin{align*}
\min\limits_{\pi \in \Pi_{\mbox{\tiny TC}}(\bbP_1, \bbP_3)} \int \tilde{c} \, d\pi &\leq \int_{(\X\times\X)^\bbN} \tilde{c}(\bfx_1, \bfx_3) \, d\pi_{13}(\bfx_1, \bfx_3) \\
&= \int_{(\X\times\X\times\X)^\bbN} \tilde{c}(\bfx_1, \bfx_3) \, d\pi_{123}(\bfx_1, \bfx_2, \bfx_3) \\
&\leq \int_{(\X\times\X\times\X)^\bbN} (\tilde{c}(\bfx_1, \bfx_2) + \tilde{c}(\bfx_2, \bfx_3)) \, d\pi_{123}(\bfx_1, \bfx_2, \bfx_3) \\
&= \int_{(\X\times\X)^\bbN} \tilde{c}(\bfx_1, \bfx_2) \, d\pi_{12}(\bfx_1, \bfx_2) + \int_{(\X\times\X)^\bbN} \tilde{c}(\bfx_2, \bfx_3) \, d\pi_{23}(\bfx_2, \bfx_3) \\
&= \min\limits_{\pi \in \Pi_{\mbox{\tiny TC}}(\bbP_1, \bbP_2)} \int \tilde{c} \, d\pi + \min\limits_{\pi \in \Pi_{\mbox{\tiny TC}}(\bbP_2, \bbP_3)} \int \tilde{c} \, d\pi.
\end{align*}
\end{proof}

\section{Reducible Transition Coupling of Irreducible Chains}\label{sec:redicible_tc}
In this appendix, we provide an example showing that a transition coupling of two irreducible transition matrices is not necessarily irreducible.
Let 
\begin{equation}\label{eq:trans_mat_1}
P = \kbordermatrix{&0 & 1 & 2 \\ 0 & 0.25 & 0.25 & 0.50 \\ 1 & 0.25 & 0.25 & 0.50 \\ 2& 0.25 & 0.25 & 0.50}
\end{equation}
and
\begin{equation}\label{eq:trans_mat_2}
Q = \kbordermatrix{&0 & 1 & 2 \\ 0 & 0.25 & 0.25 & 0.50 \\ 1 & 0.25 & 0.25 & 0.50 \\ 2 & 0.50 & 0.25 & 0.25}.
\end{equation}
Both $P$ and $Q$ are clearly irreducible, but the transition coupling $R$, given in Figure \ref{fig:reducible_trans_coup}, is reducible.
While we do not provide an example here, we remark that transition coupling matrices of aperiodic and irreducible transition matrices may also have multiple recurrent classes.

\section{Comparison to 1-step Optimal Transition Coupling}
\label{app:one_step}
In this appendix, we demonstrate how the 1-step transition coupling problem described in Section \ref{sec:background_on_otc} prioritizes expected cost in the next step over long-term average cost as the OTC problem does.

\begin{ex}
\label{ex:greedyotc_comparison}
Consider stationary Markov chains $X$ and $Y$ with transition distributions defined by the graphs in Figure \ref{fig:greedy_otc_marginals}.
In order to find an OTC of $X$ and $Y$, we must specify a cost for every pair of states $(x, y) \in \X\times\Y$.
Let states $(0,0)$, $(1,2)$, $(2,1)$, $(2,2)$, and $(3,3)$ have cost 0, states $(1,1)$, $(3, 4)$ and $(4, 3)$ have cost 1, 
state $(4,4)$ have cost 9, and let all other states have a cost sufficiently large 1-step OTC and OTC do not assign 
them positive probability.

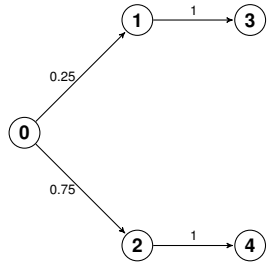
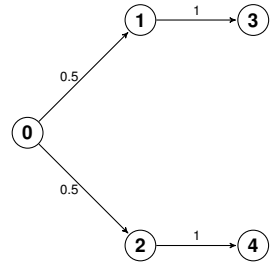
\begin{figure}[h]
\centering
\begin{subfigure}{0.45\linewidth}
\centering
\scalebox{0.5}{
\begin{tikzpicture}[->,>=stealth',shorten >=1pt,auto,node distance=3cm,
                    thick,main node/.style={circle,draw,font=\sffamily\Large\bfseries}]

  \node[main node] (0) {0};
  \node[main node] (1) [right of=0, above of=0]{1};
  \node[main node] (2) [right of=0, below of=0]{2};
  \node[main node] (3) [right of=1]{3};
  \node[main node] (4) [right of=2]{4};

  \path[every node/.style={font=\sffamily\small}]
    (0) edge [right] node[above, left] {0.25} (1)
    (0) edge [right] node[below, left] {0.75} (2)
    (1) edge [right] node[above] {1} (3)
    (2) edge [right] node[above] {1} (4);
\end{tikzpicture}
}
\vspace{5mm}
\caption{$X$ transition probabilities}
\end{subfigure}
\begin{subfigure}{0.45\linewidth}
\centering
\scalebox{0.5}{
\begin{tikzpicture}[->,>=stealth',shorten >=1pt,auto,node distance=3cm,
                    thick,main node/.style={circle,draw,font=\sffamily\Large\bfseries}]

  \node[main node] (0) {0};
  \node[main node] (1) [right of=0, above of=0]{1};
  \node[main node] (2) [right of=0, below of=0]{2};
  \node[main node] (3) [right of=1]{3};
  \node[main node] (4) [right of=2]{4};

  \path[every node/.style={font=\sffamily\small}]
    (0) edge [right] node[above, left] {0.5} (1)
    (0) edge [right] node[below, left] {0.5} (2)
    (1) edge [right] node[above] {1} (3)
    (2) edge [right] node[above] {1} (4);
\end{tikzpicture}
}
\vspace{5mm}
\caption{$Y$ transition probabilities}
\end{subfigure}
\caption{Marginal stationary Markov chains.
Both chains return to state $0$ from states $3$ and $4$ with probability one.}
\label{fig:greedy_otc_marginals}
\end{figure}
\noindent
The transition distributions of the OTC and 1-step OTC are largely the same except for the transitions from $(0, 0)$ to  $(1, 1)$, $(1, 2)$, $(2, 1)$ and $(2,2)$ (see Figure \ref{fig:greedy_otc} for an illustration).
In particular, since the OTC chooses the transitions to minimize expected cost over the complete trajectory of the chain, it assigns lower probability to the transition $(0, 0) \rightarrow (2, 2)$ in order to avoid the costly state $(4, 4)$.
On the other hand, the 1-step OTC does not utilize this information in deciding how to transition from $(0, 0)$ and assigns a higher probability to the transition $(0, 0) \rightarrow (2, 2)$.
As a result, the expected cost of the 1-step OTC is $5/3$ compared to an expected cost of $1$ for the OTC.
In fact, by increasing the cost of the state $(4, 4)$, one can make the difference between the 1-step OTC and OTC costs arbitrarily large.
The lower expected cost indicates that the OTC constitutes a better alignment of $X$ and $Y$ as compared to the 1-step OTC.

\begin{figure}[h]
\centering
\begin{subfigure}{0.45\linewidth}
\centering
\scalebox{0.5}{
\begin{tikzpicture}[->,>=stealth',shorten >=1pt,auto,node distance=3cm,
                    thick,main node/.style={circle,draw,font=\sffamily\Large\bfseries}]


  \node[main node] (00) {(0,0)};
  \node[main node] (12) [right of=0, above of=0]{(1,2)};
  \node[main node] (21) [right of=0]{(2,1)};
  \node[main node] (22) [right of=0, below of=0]{(2,2)};
  \node[main node] (11) [above of=12]{(1,1)};
  \node[main node] (34) [right of=12]{(3,4)};
  \node[main node] (43) [right of=21]{(4,3)};
  \node[main node] (44) [right of=22]{(4,4)};
  \node[main node] (33) [above of=34]{(3,3)};

  \path[every node/.style={font=\sffamily\small}]
	(00) edge [right] node[above, left] {0.25} (11)
	(00) edge [right] node[above] {0.25} (21)
	(00) edge [right] node[below, left] {0.5} (22)
	(11) edge [right] node[above] {1} (33)
	(12) edge [right] node[above] {1} (34)
	(21) edge [right] node[above] {1} (43)
	(22) edge [right] node[above] {1} (44);
\end{tikzpicture}

}
\vspace{5mm}
\caption{1-step OTC (expected cost of $5/3$)}
\label{fig:greedy_otc_1}
\end{subfigure}
\begin{subfigure}{0.45\linewidth}
\centering
\scalebox{0.5}{
\begin{tikzpicture}[->,>=stealth',shorten >=1pt,auto,node distance=3cm,
                    thick,main node/.style={circle,draw,font=\sffamily\Large\bfseries}]

  \node[main node] (00) {(0,0)};
  \node[main node] (12) [right of=0, above of=0]{(1,2)};
  \node[main node] (21) [right of=0]{(2,1)};
  \node[main node] (22) [right of=0, below of=0]{(2,2)};
  \node[main node] (11) [above of=12]{(1,1)};
  \node[main node] (34) [right of=12]{(3,4)};
  \node[main node] (43) [right of=21]{(4,3)};
  \node[main node] (44) [right of=22]{(4,4)};
  \node[main node] (33) [above of=34]{(3,3)};

  \path[every node/.style={font=\sffamily\small}]
	(00) edge [right] node[above, left] {0.25} (12)
	(00) edge [right] node[above] {0.5} (21)
	(00) edge [right] node[below, left] {0.25} (22)
	(11) edge [right] node[above] {1} (33)
	(12) edge [right] node[above] {1} (34)
	(21) edge [right] node[above] {1} (43)
	(22) edge [right] node[above] {1} (44);
\end{tikzpicture}
}
\vspace{5mm}
\caption{OTC (expected cost of $1$)}
\label{fig:greedy_otc_2}
\end{subfigure}
\caption{An example where the 1-step OTC has sub-optimal expected cost. Both chains return to state $(0, 0)$ from states $(3, 3)$, $(3, 4)$, $(4, 3)$, and $(4, 4)$ with probability one.
Note that Figures \ref{fig:greedy_otc_1} and \ref{fig:greedy_otc_2} omit the edges that are the same between the two transition couplings.}
\label{fig:greedy_otc}
\end{figure}
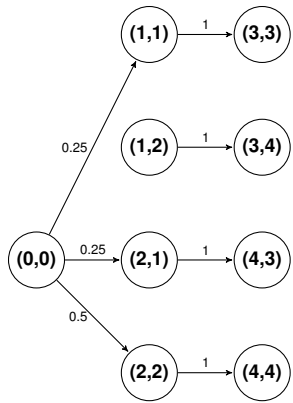
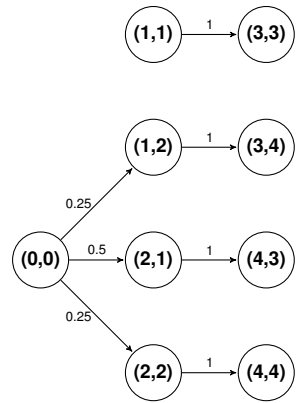
\end{ex}

\clearpage

\end{document}